\documentclass[11pt]{article}
\usepackage{indentfirst}

\usepackage{graphicx}
\usepackage{color,cite} 
\usepackage[titletoc,title]{appendix}
\usepackage{amsmath,amsfonts,amssymb,amsthm}
\def\la{\big\langle}
\def\ra{\big\rangle}

\def\ds{\displaystyle}

\def\L{{\bf L}}

\def\bfv{{\bf v}}

\def\bfw{{\bf w}}
\def\bfu{{\bf u}}
\def\bfb{{\bf b}}
\def\bfc{{\bf c}}

\def\bfn{{\bf n}}
\def\bfe{{\bf e}}

\def\bfp{{\bf p}}
\def\bfq{{\bf q}}

\def\bfA{{\bf A}}

\def\bpm{\begin{pmatrix}}
\def\epm{\end{pmatrix}}

\def\S{{\cal S}}
\def\F{{\cal F}}

\def\M{{\cal M}}

\def\N{{\cal N}}

\def\R{{\mathbb R}}

\def\rank{\hbox{\rm Rank}}

\def\vp{\varphi}

\def\P{{\cal P}}

\def\vs{\vskip 2em}

\def\v{\vskip 1em}
\def\D{{\cal D}}
\def\O{{\cal O}}

\def\begi{\begin{itemize}}
\def\endi{\end{itemize}}
\def\C{{\cal C}}
\def\div{\hbox{div}}
\def\ov{\overline}
\def\Tilde{\widetilde}

\def\bega{\begin{array}}
\def\enda{\end{array}}

\def\bel{\begin{equation}\label}
\def\eeq{\end{equation}}
\def\sqr#1#2{\vbox{\hrule height .#2pt
\hbox{\vrule width .#2pt height #1pt \kern #1pt
\vrule width .#2pt}\hrule height .#2pt }}
\def\square{\sqr74}
\def\endproof{\hphantom{MM}\hfill\llap{$\square$}\goodbreak}

\newtheorem{theorem}{Theorem}[section]

\newtheorem{lemma}{Lemma}[section]

\newtheorem{remark}{Remark}[section]

\begin{document}
\title{\bf Development of singular surfaces and formation of generic singularities in 3D pressureless flows}
\vs

\author{Geng Chen$^{*}$, Faris El-Katri$^{*}$, and Shoujun Huang$^{\dag}$\\
\, \\
$^{*}$Department of Mathematics, University of Kansas,\\
 Lawrence, KS 66045, USA.\\
$^\dag$College of Mathematical Medicine, Zhejiang Normal University, \\
Jinhua 321004, P. R. China.\\
\, \\
E-mails: gengchen@ku.edu,~elkatri@ku.edu,~sjhuang@zjnu.edu.cn.}
\maketitle
\v
{\bf Abstract:} In this paper,  the Cauchy problem for the equations of pressureless flows in three space dimensions is considered. It is well known that for smooth initial data, the density and velocity gradient often become unbounded at a finite time $t_0$. This type of singularity is associated with sticky-particle dynamics. For an open dense set of smooth initial data, the formation and structure of singularities are investigated using the transversality theorem, with the aid of constructive methods for initial velocity fields. In contrast to the two‑dimensional case studied in \cite{BCH2}, for $t>t_0$ a singular hypersurface will be formed, whose governing system of equations is not hyperbolic. By assuming the initial data are analytic, we are able to precisely analyze the singular behavior of velocity and density. Based on this, the system of equations for singular hypersurfaces can be recast into a first-order system of Briot-Bouquet type, and the local analytic solution can be constructed using a version of the Cauchy-Kovalevskaya theorem.

{\bf Key words:}  3D pressureless flows, formation of singularities, generic analytic data, Cauchy-Kovalevskaya theorem.

\section{Introduction}\label{s:1}
\setcounter{equation}{0}

In this paper, we shall consider the initial value problem for the equations of pressureless flows
in three space dimensions (3D):
\begin{equation}\label{SP}
\left\{\begin{array}{rl}
\!\!\partial_t\rho+\div( \rho \bfv)&=0, \cr
\!\!\partial_t(\rho \bfv)+\div\big( \rho \bfv\otimes \bfv\big)&=0,\enda
\right.  \qquad\quad  t\in  ]0,T[\,,\quad x\in\R^3,\eeq
\bel{ic}\rho(0,x)~=~\bar \rho(x), \qquad \bfv(0,x)~=~\bfw(x).\eeq

It is well-known that as long as the solution remains smooth, it can be directly computed by the method
of characteristics.  Indeed, we have
\begin{align}
	\mathbf{v}(x+t\mathbf{w}(x))&=\mathbf{w}(x), 
	\label{Eqn:velocityTransport}\end{align}
and 
	\begin{align}
	\rho(x+t\mathbf{w}(x))&=\frac{\bar\rho(x)}
	{\det(I+tD\mathbf{w}(x))}. 
	\label{Eqn:densityTransport}
\end{align}
The above formulas are valid up to the first time $\tau(x)$
where a singularity occurs, which happens as soon as the matrix
$I+ tD\bfw(x)$ is no longer invertible.
In other words, the blowup time $\tau(x)$ is characterized by the smallest time $t>0$
such that $-1/t$ is an eigenvalue of $D\bfw(x)$, for some $x\in\R^n$.

The equations of pressureless flows have extensive and vital applications in research fields including the theory of large-scale cosmic structure formation, the dynamics of sticky particle models, and flocking dynamics.
E et al. \cite{ERS} constructed global weak solutions for two adhesion-particle conservation law systems via a generalized variational principle. Solutions lack uniqueness under standard entropy conditions, while the modified gravitational model yields unique weak solutions. For continuous nowhere-differentiable random initial data, the density becomes a pure point measure with dense shock locations for any $t>0$.  In cosmology, the equations of  pressureless flows correspond to the approximation proposed by Zeldovich \cite{Z}, later known as the Zeldovich approximation \cite{GSS}. Under weak gravitational perturbations, dark matter particles move freely along their respective initial velocity fields until the trajectories of neighboring particles intersect. Sheet-like and other density-concentrated structures form at the focal points of characteristics, and this approximation facilitates the understanding of large-scale cosmic structures \cite{GSS}.
In the dynamics of sticky particles \cite{Brenier}, particles stick together and move at the same velocity upon collision without interpenetrating each other. Meanwhile, the connection between discrete particle systems and scalar conservation laws, as well as measure-valued solutions, existence and convergence of solutions, have been thoroughly investigated, laying a rigorous mathematical foundation for the sticky particle model and the evolution of large-scale cosmic structures.

In one space dimension (1D), Danilov \cite{D} showed that the solution of (\ref{SP}) will generically contain a finite number of point masses, which can
eventually collide with each other. Huang and Wang \cite{hw} considered the well-posedness of pressureless flows in the 1D case. They proved the uniqueness of the weak solution when the initial data is a Radon measure, where the energy is required to be initially weakly continuous besides the Oleinik entropy
condition.

For the case of higher space dimensions, Sever \cite{Sever} presented an existence of a weak solution of the equations of pressureless flows (\ref{SP}), when both the  mass and kinetic energy are finite initially.  However, Bressan and Nguyen \cite{BN} pointed out that  the existence and the uniqueness of solutions to the Cauchy problem (\ref{SP})-(\ref{ic}) can fail, even for $\L^\infty$ initial data. Counterexamples were also introduced in \cite{BN}. Recently, Bianchini and Daneri \cite{BD} found that for ``almost all" measure-valued initial data,
the particles to (\ref{SP})-(\ref{ic}) never hit each other and
their motion is rectilinear, with constant speed.

Most recently, Bressan, Chen and Huang  \cite{BCH, BCH2} gave a new framework to study the generic singularities of pressureless flows (\ref{SP}) in the two-dimensional (2D) case. It was pointed out that Eq. (\ref{SP}) shall give rise to singular curves, where the mass has positive density w.r.t. one-dimensional Hausdorff measure. One of the important features  of
the governing equations for these curves is that  the corresponding Cauchy problem is ill posed and thus the local solutions have to be constructed only within a class of analytic data. Under generic assumptions on initial data, the local analytical solutions can be determined by
using a version of the Cauchy–Kovalevskaya theorem \cite{BCH2}. 

Based on the framework in \cite{BCH, BCH2}, in this paper our goal is to understand the formation
and the time evolution of the singular hypersurfaces, which arise from  the pressureless flows (\ref{SP})-(\ref{ic}) in the 3D case.

Now we briefly introduce our main result.
In this study, the space of three times continuously differentiable vector fields
$\bfw=(w_1,w_2, w_3)$ on $\R^3$  is denoted by
$$\C^3(\R^3;\R^3)~\doteq~\left\{ \bfw:\R^3\mapsto\R^3\,;~~\|\bfw\|_{\C^3}\doteq
\sup_{x\in \R^3} \sup_{|\alpha|\leq 3, ~i=1,2,3} |D^\alpha w_i(x)|~<~+\infty\right\}.$$
We consider the generic initial data within the subspace of $\C^3$
vector fields whose Jacobian
matrix $D\bfw$ vanishes as $|x|\to +\infty$, which  is denoted by
\bel{C30}\C^3_0(\R^3;\R^3)~\doteq~\left\{ \bfw\in \C^3(\R^3;\R^3),~~\lim_{|x|\to +\infty} D\bfw(x)~=~ {\bf 0}
\right\}.\eeq
It is obvious  that $\C^3_0$ is a closed subspace of $\C^3$ and is a Banach space as well.
As a consequence, for any given
$\bfw\in \C^3_0$ and $T>0$, we can find $R>0$ large enough so that
for all $t\in [0,T]$ one has
\bel{wreg}\det\bigl(I+tD\bfw(x)\bigr)~>~0\qquad\hbox{for all}~~t\in [0,T],~~|x|\geq R.\eeq
This implies that for $t\in [0,T]$, the only singularities of the solution to
(\ref{SP})-(\ref{ic}) can occur within a bounded domain.

Our first main result is on the generic singularity of (\ref{SP}). 
More precisely, assume that the initial density $\bar\rho$ and  velocity field $\mathbf{w}$ are analytic on $\D$, an open set in $\mathbb{R}^3$. We make the following assumptions. 

({\bf A1})	For every $x\in\mathcal{D}$, the matrix $D_x\mathbf{w}$ has three distinct real eigenvalues $\lambda_1(x)<\lambda_2(x)<\lambda_3(x). $
Moreover, the initial density $\bar\rho(x)>0$. 
\\\indent
({\bf A2}) The eigenvalue $\lambda_1(x)$ attains a strictly negative minimum value $\lambda_1(x_0)<0$ at some point $x_0\in\mathcal{D}$. Moreover, its Hessian $D^2\lambda_1(x_0)$ is strictly positive definite. 

Actually, the assumption ({\bf A2}) on the Hessian matrix  is indeed satisfied for generic initial data $\bfw(x)$, as in the 2D case.

By selecting proper perturbation of velocity fields and using the Thom's Transversality Theorem, we can describe the structure of generic singularities of solutions for pressureless flow (\ref{SP}), starting from an open dense set of initial data. The precise statement of the result will be given in Theorem \ref{l:83}.
The proof of this result is based on the one for the 2D solution in \cite{BCH}.
But because of three space dimensions, this argument becomes relatively more complicated than that for 2D case. And we need several new treatments for the coupling present due to the three dimensional setting. Here the use of Thom's transversality theorem is a successful approach to studying the generic structure of solutions, first used by Bressan and the first author for the variational wave equation, a 1D model for the liquid crystals, in \cite{BC}. This method has been recently applied to the compressible Euler equations in \cite{CHS}.

The next goal is to study the detailed behavior of solutions near the point where the singularity of (\ref{SP})-(\ref{ic}) forms and then to show the existence of local solutions after the formation of singularities, i.e. the solutions including singular analytical hypersurfaces. 

To do that, based on the conservation of mass and momentum of fluids, we derive a system of equations for the singular hypersurfaces. This is one of the most innovative contributions of this paper. Under the map $x\mapsto (\tau(x), x+\tau(x)\bfw(x))$, the image of the surface $\Gamma^*$ defined by (\ref{g*}) is also a surface in four-dimensional spacetime $\mathbb{R}^+\times\mathbb{R}^3$, which is represented by $\Gamma^{\sharp}$.  When $t>t_0=\tau(x_0)$, the singular hypersurface $\mathcal{S}$ will be formed and begin to develop, whose boundary should lie on the surface $\Gamma^{\sharp}$ (see Figure 1). By carefully making use of the properties of eigenspaces of $D\bfw(x)$ and their dual spaces, we succeed in analyzing the asymptotic singular behavior of velocity and density as $t$ tends to $\tau(x_0)$, which is essentially similar  to the 2D case, but needs different ideas. Afterwards, we are able to rewrite the equations for $\mathcal{S}$ in a compact form of Fuchs type or Briot–Bouquet type. Finally, we can obtain the local analytical solution for $\mathcal{S}$ using a version of the Cauchy-Kovalevskaya theorem. It also implies that for 3D pressureless flows, we can construct local piecewise smooth solutions, containing analytical singular surfaces for generic initial data. Our main result in this part will be given in Theorems \ref{Thm:ZpmForm} and \ref{Theorem:vpm} for structure of solution near the point where singularity forms and Theorem \ref{thm5.1} for local existence of solutions beyond singularity formation.

There have been several works on  the singularities of solutions to nonlinear PDEs for generic data by Damon \cite{Damon} \& Guckenheimer \cite{Guck}. Generic singularities of solutions to a class of second-order variational wave equation were studied by Bressan, Chen and their collaborators \cite{BC,BHY,CCD}. For one-dimensional scalar conservation laws, the properties of generic solutions had been studied extensively by Golubitsky \& Schaeffer \cite{GS}, Guckenheimer \cite{Gk2}, Schaeffer \cite{S} and Chen, Hu \& Shen \cite{CHS}. Kong \cite{Kong} provided an asymptotic description of this blowup, and also of the initial stages of the new shock, in a generic setting in one space dimension. A study of shock formation for multidimensional scalar conservation laws can be found in Danilov \& Mitrovic \cite{DM}. For systems of conservation laws, Alinhac \& Christodoulou described the structure and development of shocks in the multidimensional setting \cite{A,Al,Ali,Ch1}. Later work by Buckmaster, Shkoller \& Vicol studied the 3-dimensional equations of isentropic gas dynamics and the blowup asymptotics leading to shock formation in \cite{BSV,Vicol0,Vicol1,Vicol3} and many other papers. In radial symmetry, Chen, El-Katri, Hu \& Shen established global existence of classical solutions to the isentropic compressible Euler equations for supersonic rarefactive flows and singularity formation for strongly compressive flows \cite{CEHS}; see also \cite{CCW}.

This paper is organized as follows.  In Section 2, the formation of generic singularities is discussed in detail.  In Section 3, we derive the equations of singular hypersurfaces for pressureless flows.  Section 4 is devoted to studying the detailed asymptotic expansion of the velocity $\bfv^{\pm}$ and of the density $\rho^{\pm}$ of the
pressureless flows in front of and behind the singular hypersurface.  Finally, in Section 5 we are able to construct the singular surface during its initial stages,
valid for analytic data. For readers' convenience, some details of computation have been gathered in the Appendix.

\section{Formation of generic singularities in 3D}
\label{s:3}
\setcounter{equation}{0}
For readers' convenience, we review some basic
definitions and results in transversality theory.
For the proofs, one can refer to  \cite{Bloom, GG}.

{\bf Definition}. {\it Let $f:\;X\mapsto Y $ be a  smooth map of
manifolds and let $W$ be a submanifold of $Y$. We say that $f$ is
transversal to $W$ at a point $p\in X$, and write $f\pitchfork_p W$,
if
\begin{itemize}\item either $f(p)\notin W$,
\item {\text or else} $f(p)\in W$ and $T_{f(p)}Y=(df)_p(T_pX)+T_{f(p)}W.$\end{itemize}We say that $f$ is transverse to $W$, and
write $f\pitchfork W$, if $f\pitchfork_p W$ for every point $p\in
X$.}

 Here $T_pX$ denotes the tangent space at the
point $p\in X$, while $T_qY$ and $T_qW$ denote, respectively, the
tangent spaces to $Y$ and to $W$ at the point $q\in W\subset Y$.
Finally, $(df)_p:\;T_pX\mapsto T_{f(p)}Y$ denotes the differential
of the map $f$ at $p$.

{\bf Transversality Theorem}. {\it  Let $X, \Theta$ and $Y$ be smooth
manifolds, $W$ a submanifold of $Y$. Let $\theta\mapsto
\phi^{\theta}$ be a smooth map which to each $\theta\in \Theta$
associates a function $\phi^{\theta}\in \C^{\infty}(X,Y)$, and
define $\Phi: X\times\Theta\mapsto Y$ by setting
$\Phi(x,\theta)=\phi^{\theta}(x)$. \\
If $\Phi\pitchfork W$, then the
set $\{\theta\in\Theta:\;\phi^{\theta}\pitchfork W\}$ is dense in
$\Theta$.}

In this section, our aim is  to understand the formation of new singularities for a solution to
(\ref{SP}), provided with a generic set of
smooth initial data.
Given $T>0$ and a vector field $\bfw\in \C^2_0(\R^3;\R^3)$, for $r=0,1,2$
we consider the
sets
\bel{Sdef}
\S_r~\doteq~\Big\{(t,x)\,;~~0<t<T,~~\rank (I+ tD\bfw(x))=r\Big\}~\subset~
 \R\times\R^3.\eeq
By Proposition~3.2.6 in \cite{Bloom} or Lemma 5.2 in \cite{GG}, we have 
\begin{lemma}\label{l:corank}
Within the space  of all $3\times 3$ matrices, 
for every $r=0,1,2$, the set
 \bel{MR}\M_r~\doteq~\{A\in \R^{3\times 3}\,;~~\hbox{\rm rank}(A)=r\}\eeq
 is an embedded  submanifold of dimension
$r(6-r)$.
\end{lemma}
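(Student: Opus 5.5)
The plan is to show that $\M_r$ is, near each of its points, the graph of a smooth map, using the standard Schur complement construction. Fix $A_0\in\M_r$. After left and right multiplication by permutation matrices, which are linear isomorphisms of $\R^{3\times 3}$ mapping $\M_r$ onto itself, we may assume that the upper-left $r\times r$ block of $A_0$ is invertible. For $r=0$ we have $\M_0=\{0\}$, a point of dimension $0=0\cdot(6-0)$, so we take $r\in\{1,2\}$.

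Write a matrix near $A_0$ in block form
$$A~=~\bpm P & Q\\ R & S\epm,$$
with $P\in\R^{r\times r}$, $Q\in\R^{r\times(3-r)}$, $R\in\R^{(3-r)\times r}$ and $S\in\R^{(3-r)\times(3-r)}$. Since invertibility is an open condition, there is an open neighborhood $U$ of $A_0$ on which $P$ stays invertible. The key identity is
$$\bpm I & 0\\ -RP^{-1} & I\epm A \bpm I & -P^{-1}Q\\ 0 & I\epm~=~\bpm P & 0\\ 0 & S-RP^{-1}Q\epm.$$
Multiplying by invertible matrices preserves rank, so for $A\in U$ we get that $\rank(A)=r$ if and only if the Schur complement vanishes, that is, $S=RP^{-1}Q$.

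Next we define $F:U\to\R^{(3-r)\times(3-r)}$ by $F(A)=S-RP^{-1}Q$. This map is smooth, in fact rational, because $P^{-1}$ depends smoothly on $P$. We then have $\M_r\cap U=F^{-1}(0)$. The differential of $F$ in the $S$-direction is the identity, so $F$ is a submersion. By the regular value theorem, $\M_r\cap U$ is an embedded submanifold of codimension $(3-r)^2$. Equivalently, it is the graph of the smooth map $(P,Q,R)\mapsto RP^{-1}Q$ over an open set. Its dimension is therefore
$$9-(3-r)^2~=~r(6-r),$$
which gives $5$ for $r=1$ and $8$ for $r=2$.

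These local charts cover $\M_r$, so $\M_r$ is an embedded submanifold of $\R^{3\times3}$ of dimension $r(6-r)$. There is no serious obstacle in this argument. The only points needing care are checking the rank-preservation identity above and noting that the permutation normalization is a linear diffeomorphism, so that charts transfer back to the original coordinates.
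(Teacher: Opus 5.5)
Your proof is correct and follows the standard Schur-complement argument behind the results the paper cites (Proposition~3.2.6 in \cite{Bloom}, Lemma~5.2 in \cite{GG}); the paper gives no proof of its own. It does, however, use exactly your local description of the rank condition later, namely $D-\bfc^\top a^{-1}\bfb=0$ in the proof of Lemma~\ref{lemma2} and $d-\bfc^\top A^{-1}\bfb=0$ in the proof of Theorem~\ref{l:83}.
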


Now we have the following results.
\begin{lemma}\label{lemma2}  There exists an open dense set of
vector fields $\F_1\subset \C^3_0(\R^3;\R^3)$ such that for every ${\bf w}\in\F_1$,
one has
\begin{equation}\label{emp3}\S_0~\doteq~\Big\{(t,x)\in [0, T]\times\R^3\,,~~
I+tD_x{\bf w}(x)={\bf 0}\Big\}
~=~\emptyset\end{equation}
and the set
\begin{equation}\label{emp4}\S_1~\doteq~\Big\{
(t,x)\in [0, T]\times\R^3\,,~~\rank(I+tD_x{\bf w}(x))=1\Big\}\end{equation}
consists of finitely many points.
\end{lemma}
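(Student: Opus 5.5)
We apply the Transversality Theorem to the map
$$\Phi(t,x,\theta)~\doteq~I+tD\bfw^\theta(x)\in\R^{3\times 3},\qquad (t,x)\in\,]0,T+1[\,\times B_R,$$
where $\bfw^\theta=\bfw+\sum_k\theta_k\bfw_k$ is a finite-dimensional perturbation of a given $\bfw\in\C^3_0$. The target submanifolds are $W=\M_0$ and $W=\M_1$ from Lemma~\ref{l:corank}. The domain $]0,T+1[\,\times\R^3$ has dimension $4$. Lemma~\ref{l:corank} gives $\dim\M_0=0$, of codimension $9$, and $\dim\M_1=5$, of codimension $4$.

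Hence transversality of $(t,x)\mapsto I+tD\bfw^\theta(x)$ to $\M_0$ means the preimage is empty. Transversality to $\M_1$ means the preimage is a $0$-dimensional submanifold, i.e.\ a set of isolated points.

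By \eqref{wreg}, for $|x|\ge R$ and $t\in[0,T]$ the matrix $I+tD\bfw$ is invertible. This persists for small $\C^3$ perturbations, so only the compact region $[0,T]\times\ov{B}_R$ matters. Near $t=0$ we also have rank $3$. So $\S_0$ and $\S_1$ are closed subsets of a compact set $[\delta,T]\times\ov B_R$, and isolated points in a compact set are finitely many.

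\textbf{Density.} The key step is to choose perturbations so that $\Phi$ is a submersion onto $\R^{3\times3}$ at every point where $\Phi\in\M_0\cup\M_1$. At such a point $t>0$, and
$$\partial_\theta\Phi~=~t\,D\bfw_k(x).$$
So it suffices that the matrices $D\bfw_k(x)$ span all of $\R^{3\times 3}$ for each $x\in\ov B_R$.

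Take a cutoff $\varphi\in\C^\infty_c$ with $\varphi\equiv1$ on $B_{R+1}$, and set
$$\bfw_{ij}(x)=\varphi(x)\,x_j\,\bfe_i,\qquad i,j=1,2,3.$$
These lie in $\C^3_0$, and on $B_R$ we have $D\bfw_{ij}=E_{ij}$, the elementary matrices. With $\theta\in\R^9$, $\Phi$ is a submersion on $]0,T+1[\times B_R\times\R^9$, hence transverse to both $\M_0$ and $\M_1$.

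The Transversality Theorem then gives $\theta$ arbitrarily small with $\phi^\theta\pitchfork\M_0$ and $\phi^\theta\pitchfork\M_1$. Then $\S_0=\emptyset$ and $\S_1$ is finite for $\bfw^\theta$, where $\|\bfw^\theta-\bfw\|_{\C^3}\to0$. Since $\C^3$ only yields a $\C^2$ map $\Phi$, one should apply the $\C^r$ version of the theorem. The needed smoothness order is $r>\max(0,\dim X-\operatorname{codim}W)=0$, so this is fine.

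\textbf{Openness.} This is the main technical point. Fix $\bfw\in\F_1$. $\S_0=\emptyset$ on the compact set is an open condition, since the distance of $I+tD\bfw$ from $0$ has a positive minimum.

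For $\S_1$, transversality to $\M_1$ at each of the finitely many points $p_m=(t_m,x_m)$ can be phrased through a local defining function. Near a rank-$1$ matrix $A$, choose bases adapted to $A$ and write $A'=\bpm a&b\\ c&D\epm$ with $a$ invertible $1\times1$. Then $\M_1\cup\M_0$ is locally $\{D-ca^{-1}b=0\}$. The map
$$G(t,x)\doteq D-ca^{-1}b\in\R^{2\times2}\cong\R^4$$
has invertible Jacobian at $p_m$.

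By the implicit function theorem, under a $\C^3$-small perturbation $G$ still has exactly one zero near each $p_m$, and still with invertible Jacobian. Away from small neighborhoods of the $p_m$, the compact set where $\Phi\in\ov{\M_1}$ is empty for $\bfw$, so by compactness it stays empty for nearby fields. Thus the transversal finiteness persists, and $\F_1$ is open.

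The delicate points are the uniform control near $|x|\to\infty$ and near $t=0$ (handled by \eqref{wreg}), and the fact that $\M_1$ is not closed: its closure adds $\M_0$. That is why $\M_0$ is excluded first.
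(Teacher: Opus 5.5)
Your argument is correct, but it is organized differently from the paper's.

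\textbf{What the paper does.} It works locally. At each point of $\S_1$ it chooses adapted coordinates and writes the rank-one condition as the vanishing of the $2\times 2$ Schur complement $D-\bfc^\top a^{-1}\bfb$. It then uses four localized perturbations $\varphi(x-x_0)D^{(\alpha\beta)}(x-x_0)$, $\alpha,\beta\in\{2,3\}$, whose $\theta$-Jacobian on the Schur complement is $t_0 I_4$. These families are patched over a finite cover of the compact set $\S_1$ ($4m$ parameters). The statement $\S_0=\emptyset$ is simply deferred to Lemma~5.3 of \cite{BCH}.

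\textbf{What you do.} You use a single global nine-parameter family that makes $(t,x,\theta)\mapsto I+tD\bfw^\theta(x)$ a submersion onto all of $\R^{3\times 3}$ on the relevant region. Transversality to every rank stratum is then automatic. The codimension count from Lemma~\ref{l:corank} ($9>4$ for $\M_0$, $4=4$ for $\M_1$) gives $\S_0=\emptyset$ and the discreteness of $\S_1$ in one stroke. You need no charts and no covering argument, and you correctly account for $\Phi$ being only $\C^2$.

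\textbf{Where the two meet.} The Schur complement enters your proof only in the openness step, which coincides with the paper's Step~5. The paper's local set-up has the advantage of producing directly the defining functions $\Phi_{ij}$ and the full-rank condition {\bf (FR)} reused in Lemma~\ref{lemma3}. Your observation that transversality to $\M_1$ is equivalent to invertibility of the Jacobian of the Schur complement yields the same information.

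\textbf{Two things to make explicit.} First, define $\F_1$ as the set of $\bfw$ with $\S_0=\emptyset$ and $(t,x)\mapsto I+tD\bfw(x)$ transversal to $\M_1$. Your openness argument uses transversality at the points $p_m$, and that does not follow from finiteness of $\S_1$ alone. Second, the Transversality Theorem as quoted only gives density of good parameters for each $W$ separately, so justify getting $\M_0$ and $\M_1$ at once. You can use the measure-zero (Sard) form of the theorem. Alternatively, note that avoiding $\M_0$ is an open condition in $\theta$, so you are intersecting an open dense set with a dense one.
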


 {\bf Proof.} The proof for (\ref{emp3}) relies on the same argument used in
 Lemma~5.3 in \cite{BCH}. Thus, the details are omitted here.
 In the following, we shall focus on the second statement.

 {\bf 1.} Let $\bfw$ be a vector field for which (\ref{emp3}) holds.
  Consider a point $(t_0, x_0)\in \S_1$. Then we can choose a system of independent and dependent coordinates $x=(x_1, x_2, x_3)$
and $y=(y_1, y_2, y_3)$ such that, in these coordinates, the
Jacobian matrix takes the form
 \bel{ran1} I+t_0D\bfw(x_0)~=~
\left[\bega{cc} a & \bfb\cr \bfc^\top & D\enda \right].\eeq
Here $a$ is a non-zero constant, $\bfb, \bfc\in \R^{2}$ are row vectors, while $D$ is a $2\times2$ matrix.
The condition $\rank(I+t_0D_x{\bf w}(x_0))=1$ can be locally
represented in terms of the following equations
\bel{ran2}\phi_{ij}~\doteq~\left(D-\bfc^\top a^{-1}\bfb\right)_{ij}~=~0,\qquad\qquad i,j\in \{1,2\}.\eeq

{\bf 2.}  Consider the family of perturbed vector fields
\bel{ran4}\bfv(\theta)(x)~=~\bfw(x)+
\sum_{\alpha,\beta=2}^3\theta_{\alpha\beta}\bfp_{\alpha\beta}(x) \eeq
where, for $\alpha,\beta=2,3$,
\bel{ran5}\bfp_{\alpha\beta}(x)~\doteq~\vp(x-x_0)\cdot  [ D^{(\alpha\beta)}(x-x_0)]\,.\eeq
Here $D^{(\alpha\beta)}$ denotes the $3\times 3$
matrix whose entries are all zero, except for the element in the
$\alpha$-th row and $\beta$-th column, which is 1. In addition, $\varphi\in\C_0^3(\R^n)$ is defined by
\bel{test}\left\{\bega{cl} \vp(x)\,=\,1\quad&\hbox{if} ~~|x|\leq 1,\\[3mm]
 \vp(x)\,\in\,[0,1]\quad &\hbox{if} ~~1\leq |x|\leq 2 ,\\[3mm]
  \vp(x)\,=\,0\quad&\hbox{if} ~~|x|\geq 2,\enda\right.\eeq 
The perturbed Jacobian  matrix can now be written as
\begin{equation}\label{ma}I+t[D_x{\bf v}(\theta)](x)   ~=~
\left[\bega{cc} a & {\bfb}\cr ({\bfc}+\Tilde {\bfc})^\top & D+\Tilde D\enda \right],
\end{equation}
with
\begin{equation}\label{ran6}\Tilde{\bfc}~=~t\left(\begin{array}{c}\sum_{j=2}^3\theta_{2
j}(x_j-x_{0j})\varphi_{x_{1}}\vspace{3mm}\\\sum_{j=2}^3\theta_{3
j}(x_j-x_{0j})\varphi_{x_{1}}\end{array}\right),\end{equation}\begin{equation}\label{ran7}
\Tilde D~=~t\left(\sum_{j=2}^3\theta_{\alpha
j}\Big((x_j-x_{0j})\varphi\Big)_{x_{\beta}}\right)_{\alpha,\beta=2,3}.\end{equation}

{\bf 3.}
Introducing the functions $$\Phi_{ij}(t,x,\theta)~\doteq~\Big(D+\Tilde D-(\bfc+\Tilde \bfc)^\top a^{-1}\bfb\Big)_{ij}\,,\qquad i,j\in \{2,3\},$$
we claim that the map $(t,x,\theta)\mapsto \Phi(t,x,\theta)\doteq(\Phi_{ij})_{i,j=2,3}$ is transversal to the zero-dimensional manifold $\{{\bf 0}\}\subset\R^{4}$
at the point $(t_0,x_0)$.
By the definition of transversality, it suffices
to show that the $4\times4$ Jacobian matrix of partial derivatives
of $\Phi$ w.r.t.~$\theta=(\theta_{\alpha\beta})$ has rank $4$.  Indeed, at
the point $(t_0, x_0)$, an explicit computation
yields $\Tilde \bfc^\top=0$ and
$${\partial \Phi_{ij}\over\partial \theta_{\alpha\beta}}(t_0, x_0, \theta)
~=\frac{\partial
(\Tilde D)_{ij}}{\partial\theta_{\alpha\beta}}(t_0,x_0,\theta)~=~t_0\cdot
\delta_{i\alpha}\,\delta_{j\beta}~=~\left\{ \bega{cl} t_0\quad
&\hbox{if}\quad i=\alpha
~~\hbox{and}~~j=\beta,\\[3mm]
0\quad &\hbox{otherwise.}\enda\right.$$
We thus conclude that the map $\Phi(t,x,\theta)$ is transversal to the
manifold $\{\bf 0\}$ for $(t,x)$ in a neighborhood of $(t_0, x_0)$ and $\theta$ sufficiently close to zero.

{\bf 4.}
By (\ref{emp3}) it follows  $$\S_1=\Big\{(t,x);\;\rank(I+tD_x{\bfw}(x))=1\Big\}=\Big\{(t,x);\;\rank(I+tD_x{\bfw}(x))\leq1\Big\},$$
hence the set $\S_1$ in (\ref{emp4}) is closed.
Since $\bfw\in \C^3_0$, the  decay of $\nabla \bfw(x)$ as $|x|\to \infty$ implies that
the set $\S_1$ is compact.

Similar to Step 4 in the proof of Lemma~5.3 in \cite{BCH},
we can cover the compact set $\S_1$
with finitely many neighborhoods $\N_{(t_i, x_i)}$, $i=1,\ldots,m$.
Combining all these perturbations, we construct a family of perturbations
depending on $4m$ parameters, such that the map $\Phi(t,x,\theta)$ is transversal to the zero manifold $\{\bf 0\}\subset \R^4$ at every point in $\S_1$ and $\theta\in\R^{4m}$ sufficiently close to zero. Trivially, this map is also transversal at points
$(t,x)\notin \cup_i \N_{(t_i, x_i)}$ where the rank is greater than $ 2$.

By the Transversality Theorem,  for almost every $\theta$ in a neighborhood of
${\bf 0}\in \R^{4m}$, the map $(t,x)\mapsto \Phi(t,x,\theta)\doteq(\Phi_{ij})$
 is transversal to the zero manifold $\mathcal{M}=\{\bf 0\}$.
  Therefore, whenever
 \bel{rank2}\rank(I+t D_x[\bfv(\theta)](x)) ~=~ 1,\eeq
 the $4\times 4$ Jacobian matrix of partial derivatives
$D_{(t,x)}\Phi(t,x)$ (w.r.t.~the variables $t, x_1, x_2,x_3$) has full rank $4$.
Hence, for the vector field $\bfv(\theta)$, the set
of points $(t,x) $ where (\ref{rank2}) holds consists of isolated points.

The previous arguments show that there exists a dense set
$\F_1\subset\C_0^3(\mathbb{R}^3;\R^3)$ such that, for every ${\bf w}\in\F_1$,
the corresponding set $\S_1$ in (\ref{emp4}) contains  at most finitely many points.

{\bf 5.} It remains to prove that the set $\F_1$ is open.
Let $\bfw\in \F_1$.   Then by assumption there exist at most finitely many
points $(t_k, x_k)\in [0,T]\times\R^3$, $k=1,\ldots,N$, where the matrix $I+t_k D_x \bfw(x_k)$ has rank 1.

For each one of these points we can choose a neighborhood $\N_{(t_k, x_k)}$
and a local system of coordinates such that
the condition $\rank(I+t D_x \bfw(x)) = 1$ can be written as
\bel{PIJ}\Phi_{ij}(t,x)=0\qquad i,j\in \{1,2\}.\eeq
Moreover, the $4\times 4$ Jacobian matrix of partial derivatives
$D_{(t,x)} \Phi_{ij}(t,x)$ has full rank.

By the implicit function theorem, for every vector field $\Tilde\bfw$ with
$\|\Tilde\bfw-\bfw\|_{\C^3}$ sufficiently small, there exists a unique point $(\tilde t_k, \tilde x_k)\in \N_{(t_k, x_k)}$ such that
 $\rank(I+\tilde t_k D_x \Tilde \bfw(\tilde x_k)) = 1$.
On the other hand, since the rank of a matrix is lower semicontinuous,
for all $(t,x)\notin \cup_k\N_{(t_k, x_k)}$ we have $\rank(I+t D_x \bfw(x)) \geq 2$.

This shows that $\Tilde\bfw\in \F_1$ as well, proving that the set $\F_1$ is open. The proof is completed. 
 \endproof

As mentioned before, our main interest in this section  is to understand the formation of a singularity in a solution to (\ref{SP}) with  generic initial data (\ref{ic}) in $\R^3$.
Given a vector field $\bfw\in \C^3_0(\R^3;\,\R^3)$, we consider the compact set
$$K_0~\doteq~\Big\{(t,x)\in [0,T]\times\R^n\,;~~\det(I+t D_x\bfw(x)) = 0\Big\},$$
which defines a multifunction with closed and compact values
\bel{taud}\bega{rl}\tau(x)&=~\Bigl\{t\in [0,T]\,;~~\det(I+t D_x\bfw(x)) = 0\Bigr\}\\[4mm]
&\ds=~
\left\{- {1\over\lambda}\,;~~\hbox{$\lambda$
is a real  eigenvalue of the matrix $D_x\bfw(x)$, with  $\lambda\leq  -{1\over T}$}\right\}.
\enda\eeq
Clearly, $\tau(x)$ can have at most $3$ distinct values.

In the following, $\S_1\subset \R^{1+3}$ is the set defined at (\ref{emp4}). Now we have 
\begin{lemma}\label{lemma3}   There exists an open dense set of vector fields
$\F_2\subset\C^3_0(\R^3;\R^3)$ with the following property.
For every $\bfw\in \F_2$,
 the multifunction $\tau(\cdot)$ cannot have a local minimum
at any  point $(t_0, x_0)$.
\end{lemma}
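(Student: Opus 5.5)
The statement only makes sense at points of $\S_1$: away from $\S_1$ a simple eigenvalue branch $\lambda_1$ can attain a strict minimum, which is exactly the generic situation of ({\bf A2}). So I read Lemma~\ref{lemma3} as saying that $\tau(\cdot)$ has no local minimum at any $(t_0,x_0)\in\S_1$. Here "local minimum at $(t_0,x_0)$" means: $t_0\in\tau(x_0)$, and for $x$ near $x_0$ no value $t\in\tau(x)$ lies below $t_0$ near $t_0$.

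The plan is to take $\F_2=\F_1$, the open dense set of Lemma~\ref{lemma2}. Openness and density are then inherited, so the work reduces to showing that every $\bfw\in\F_1$ has the required property. The only input needed is the transversality obtained in Steps~3--5 of that proof. Namely, at each of the finitely many points $(t_0,x_0)\in\S_1$ there are local coordinates in which
\[
\det\bigl(I+tD\bfw(x)\bigr)=a(t,x)\,\det\Phi(t,x),
\]
where $a\neq 0$ and $\Phi=(\Phi_{ij})_{i,j=2,3}$ is the $2\times 2$ Schur complement. Moreover, $D_{(t,x)}\Phi(t_0,x_0)$ has rank $4$, so $\Phi$ is a local diffeomorphism from a neighborhood $\N$ of $(t_0,x_0)$ in $\R^{1+3}$ onto a neighborhood of ${\bf 0}$ in $\R^{2\times2}\cong\R^4$. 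Consequently, near $(t_0,x_0)$ the set $K_0$ is exactly $\Phi^{-1}(\C)$, where
\[
\C=\{M\in\R^{2\times2};\ \det M=0\}
\]
is the quadratic cone $m_{11}m_{22}=m_{12}m_{21}$.

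The key step is a linear-algebra observation. Let $\ell$ be the linear functional on $\R^4$ given by $\ell=dt\circ(D\Phi(t_0,x_0))^{-1}$; it is nonzero because $D\Phi$ is invertible. The cone $\C$ is invariant under $M\mapsto -M$ and spans $\R^4$, since it contains the four elementary matrices. Hence $\ell$ cannot vanish identically on $\C$, and there is a $V\in\C$ with $\ell(V)<0$.

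Now consider the curve $s\mapsto(t(s),x(s))=\Phi^{-1}(sV)$ for small $s>0$. Because $sV\in\C$ exactly, no implicit-function correction is needed: $\det(I+t(s)D\bfw(x(s)))=0$, that is, $t(s)\in\tau(x(s))$. Expanding,
\[
t(s)=t_0+s\,\ell(V)+O(s^2)<t_0,
\]
while $x(s)\to x_0$ and $t(s)\in\,]0,T[$. Thus arbitrarily close to $x_0$ the multifunction $\tau$ takes values strictly smaller than $t_0$ and arbitrarily close to $t_0$, so $(t_0,x_0)$ is not a local minimum.

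The main obstacle is making this reduction precise. First, one must justify that near $(t_0,x_0)$ the zero set of the determinant coincides with $\Phi^{-1}(\C)$, uniformly for $(t,x)\in\N$. This follows from $a\neq 0$ persisting on $\N$ and the Schur factorization. Second, one must be careful about which meaning of "local minimum" the multifunction statement uses. If the intended meaning concerns only the smallest branch $\min\tau(x)$, the same curve still works: it produces values of $\tau$ below $t_0$, so $\min\tau(x(s))<t_0$. If instead the statement is read as concerning local maxima, the same argument with $\ell(V)>0$ (obtained from $-V$) excludes those as well. Hence the cone argument covers either reading.
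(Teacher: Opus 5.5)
Your argument is correct, and it takes a different and more economical route than the paper.

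\textbf{The paper's route.} It works on the two-dimensional invariant subspace $E(x)$ of the double eigenvalue and linearizes the restricted block as $\lambda_1 I+(\bfb_{ij}\cdot y)$. It then looks for a direction $y$ with negative trace and positive discriminant. To guarantee such a $y$, it imposes a second generic condition \textbf{(LI)} (independence of $\bfb_{12},\bfb_{21},\bfb_{11}+\bfb_{22}$) on top of \textbf{(FR)}. It then needs a separate quadratic perturbation to show that \textbf{(LI)} is dense.

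\textbf{Your route.} You observe that \textbf{(FR)} alone makes the Schur-complement map $\Phi$ a local diffeomorphism onto a neighborhood of ${\bf 0}\in\R^{2\times 2}$. Hence $K_0$ is locally the preimage of the cone $\{\det M=0\}$. That cone is symmetric and spans $\R^4$, so the nonzero functional $\ell$ takes negative values on it, and $\Phi^{-1}(sV)$ gives exact zeros of the determinant with $t<t_0$.

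\textbf{What each approach buys.} Your approach removes both the extra condition and the second density argument. This is consistent with the paper's own notation: there, \textbf{(FR)} amounts to independence of $\bfb_{12},\bfb_{21},\bfb_{11}-\bfb_{22}$. That already gives a direction with $\bfb_{12}\cdot y=\bfb_{21}\cdot y=0\neq(\bfb_{11}-\bfb_{22})\cdot y$, hence a positive discriminant, so \textbf{(LI)} is not actually needed. Your approach also does not use the ordering $\lambda_1=\lambda_2<\lambda_3<0$. It avoids the first-order ``$\approx$'' eigenvalue expansions, because your curve lies exactly in $K_0$. What the paper's route offers in exchange is an explicit description, in terms of $D^2\bfw(x_0)$, of the directions along which the double eigenvalue splits.

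\textbf{One bookkeeping point.} Lemma~\ref{lemma2} as stated only gives $\S_0=\emptyset$ and finiteness of $\S_1$. So rather than $\F_2=\F_1$, you should take $\F_2$ to be the set of $\bfw\in\F_1$ satisfying \textbf{(FR)}. Its density and openness are exactly what Steps 4--5 of that proof establish, and this is also how the paper proceeds before it adds \textbf{(LI)}.
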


{\bf Proof.}
We need to show that, for a generic vector field $\bfw\in \C^3_0$,
if $(t_0, x_0)\in \S_1$, then there exist points $(t,x)$ arbitrarily close to
$(t_0, x_0)$, such that
\bel{bmin}t\,<\,t_0,\qquad\qquad \det(I+t D_x\bfw(x)) ~=~ 0.\eeq

{\bf 1.} To understand what is involved here, we assume $(t_0, x_0)\in \mathcal{S}_1$.
Let
\begin{equation}
\label{lam12}    
-{1\over t_0}~=~\lambda_1~=~\lambda_2~<~\lambda_3~<~0
\end{equation}
be the eigenvalues of the matrix $D\bfw(x_0)$.  By assumption,
$$\rank(D\bfw(x_0) -\lambda_1I) ~=~1.$$
For $x$ close to $x_0$, the matrix $D\bfw(x)$ will have an isolated
eigenvalue close to $\lambda_3$.   Call $\ell_3(x)$ a left eigenvector
corresponding to this eigenvalue, and let $E(x)=\ell_3^\perp$ be the
2-dimensional orthogonal subspace.
We can now consider the map
\bel{Mx}x~\mapsto~ M(x)~=~D\bfw(x)\bigg|_{E(x)}\,.\eeq
For each $x\in \R^3$, the map (\ref{Mx}) associates a $2\times 2$ matrix
obtained by restricting the Jacobian $D_x\bfw$ to the 2-dimensional
invariant subspace $E(x)$.

Notice that, for $x=x_0$, by assumption we have
$$M(x_0)~=~\begin{pmatrix} \lambda_1&0\cr 0&\lambda_1\end{pmatrix}.$$
We now study the eigenvalues of this matrix for $x$ close to $x_0$.
By writing a Taylor approximation for $y\approx 0\in \R^3$, we have
\bel{M2}
M(x_0+y)~\approx~\begin{pmatrix} \lambda_1+ \bfb_{11}\cdot y&\bfb_{12}\cdot y\cr \bfb_{21}\cdot y&\lambda_1+\bfb_{22}\cdot y\end{pmatrix}.\eeq
Here $\bfb_{ij}$ are row vectors in $\R^3$. 

We compute the eigenvalues of $M(x_0+y)$ in (\ref{M2}).
The sum of these two eigenvalues is the trace, hence
\bel{M3}\lambda_1(x_0+y)+\lambda_2(x_0+y)~=~\lambda_1+\lambda_2+ (\bfb_{11}+\bfb_{22})\cdot y.\eeq
First, we need to find a perturbation
$y$ such that $(\bfb_{11}+\bfb_{22})\cdot y<0$, (note $\bfb_{11}+\bfb_{22}\not= 0\in\R^3$ is clearly a generically true assumption), so by \eqref{lam12}, 
$$\lambda_1(x_0+y)~<~\lambda_1\,.$$
On the other hand, we must make sure that the eigenvalues
$\lambda_1,\lambda_2$ of $M(x_0+y)$ are real.
Thus, we need to study the equation
\bel{char}\lambda^2 -\Big[ 2\lambda_1 +(\bfb_{11}+\bfb_{22})\cdot y\Big]\lambda +
\Big[ (\lambda_1+ \bfb_{11}\cdot y)(\lambda_1+\bfb_{22}\cdot y) - (\bfb_{12}\cdot y)
(\bfb_{21}\cdot y)\Big]~=~0,\eeq
and check the sign of the discriminant:
\bel{discr}\bega{rl}
\Delta&\ds=~\Big[ 2\lambda_1 +(\bfb_{11}+\bfb_{22})\cdot y\Big]^2 - 4 \Big[ (\lambda_1+ \bfb_{11}\cdot y)(\lambda_1+\bfb_{22}\cdot y) - (\bfb_{12}\cdot y)
(\bfb_{21}\cdot y)\Big]
\\[4mm]
&\ds =~\Big[ (\bfb_{11}\cdot y) +(\bfb_{22}\cdot y)\Big]^2 - 4 (\bfb_{11}\cdot y)(\bfb_{22}\cdot y)
+4(\bfb_{12}\cdot y)
(\bfb_{21}\cdot y) \\[4mm]
&\ds=~\Big[ (\bfb_{11}\cdot y) -(\bfb_{22}\cdot y)\Big]^2 +4(\bfb_{12}\cdot y)
(\bfb_{21}\cdot y)
.\enda
\eeq
In the end, we need to find a vector $y\in \R^3$ such that
\bel{M4}
(\bfb_{12}\cdot y)(\bfb_{21}\cdot y)~>~0,\qquad \qquad (\bfb_{11}+\bfb_{22})\cdot y~
< ~0.\eeq

To show such a vector $y$ exists under the generic conditions on the vectors $\bfb_{ij}$, we only need to prove that $\bfb_{12}$, $\bfb_{21}$, $(\bfb_{11}+\bfb_{22})$ are
linearly independent. In fact, assume $\bfb_{12}$, $\bfb_{21}$, $(\bfb_{11}+\bfb_{22})$ are
linearly independent, then there exists a vector $z$ on the plane spanned by $\bfb_{12}$ and $\bfb_{21}$, such that 
$\bfb_{12}\cdot z$ and $\bfb_{21}\cdot z$ are both nonzero and have the same sign. Then still using $\bfb_{12}$, $\bfb_{21}$, $(\bfb_{11}+\bfb_{22})$ are
linearly independent, and either choosing $y=z$ or $y=-z$, we have 
$$(\bfb_{11}+\bfb_{22})\cdot y\,<\,0.$$
This would yield the desired conclusion.

In the next part, we will include a proof showing that $\bfb_{12}$, $\bfb_{21}$, $(\bfb_{11}+\bfb_{22})$ are
linearly independent, under the generic conditions on the vectors $\bfb_{ij}$.

{\bf 2.} By Lemma~\ref{lemma2}, there is an open dense set $\F_1\subset\C^3_0$
of vector fields for which the set $\S_1$ consists of finitely many isolated points.
By the previous proof, we can also assume the full-rank condition:
\begi
\item[{\bf (FR)}]  {\it For each $(t_0, x_0)\in\S_1$, after a suitable change of coordinates, the condition $\rank(I+t D_x \bfw(x)) = 1$ can be written
in terms of the 4 scalar equations
(\ref{PIJ}).
Moreover, at the point
$(t_0, x_0)$, the $4\times 4$ Jacobian matrix of partial derivatives
$D_{(t,x)} \Phi_{ij}(t,x)$ has full rank.}
\endi

Based on the analysis in step {\bf 1}, we introduce an additional  condition
of linear independence:
\begi
\item[{\bf (LI)}] {\it At every point $(t_0, x_0)\in\S_1$
one can find a local system of coordinates such that
the corresponding vectors $\bfb_{12}$, $\bfb_{21}$,
$(\bfb_{11}+\bfb_{22})$  defined at (\ref{M2}) are linearly independent.}
\endi

We now define $\F_2$ to be the family of all vector fields $\bfw\in \F_1$
which satisfy the additional conditions {\bf (FR)} and {\bf (LI)}.
The openness of $\F_2\subseteq\C^3_0$ is clear.  Indeed,
for a matrix, the property of being full rank is preserved under small perturbations.
Similarly, for the three vectors $\bfb_{12}$, $\bfb_{21}$,
$(\bfb_{11}+\bfb_{22})$  introduced at (\ref{M2}), the property of being linearly independent is also preserved under smooth perturbations.

{\bf 3.}  Having proved openness, it remains to prove density.
By the proof of Lemma~\ref{lemma2}, we already know that
the set of vector fields that satisfy
{\bf (FR)} is dense. We claim that this remains true after adding the
condition {\bf (LI)}.  Indeed, let $\bfw\in \F_1$ be a vector field satisfying
{\bf (FR)}.    Let $(t_0, x_0)\in \S_1$.  At this stage it suffices to construct
a one-parameter family of vector fields $\theta\mapsto \bfv(\theta)$
such that $\bfv(0)=\bfw$, while $\bfv(\theta)$ satisfies {\bf (LI)}
for every $\theta>0$ small.

{\bf 4.} To achieve this goal, we first derive the formulas for $\bfb_{ij}$. Choose a basis of $\R^3$ of the form $\{\bfe_1,\bfe_2,\bfe_3\}$,
where $\bfe_3$ lies in the kernel of the linear map $ D_x \bfw(x_0)-\lambda_3 \,I$,
while $\hbox{span}\{\bfe_1,\bfe_2\}$
is an invariant subspace.
Let $\{\ell_1,\ell_2,\ell_3\}$ be the dual basis, so that
\begin{equation}\label{orth}\ell_i\cdot \bfe_j~=~\delta_{ij}\,,\qquad\quad i,j\in \{1,2,3\}.\end{equation}
For $x$ close to $x_0$,  the matrix $D\bfw(x)$ will have an isolated
eigenvalue close to $\lambda_3$.   Call $\ell_3(x)$ a left eigenvector
corresponding to this eigenvalue, and let $E(x)=\ell^\perp_3(x)$ be the
2-dimensional orthogonal subspace. The basis of $\R^3$ can be chosen as $\{\bfe_1(x),\bfe_2(x),\bfe_3(x)\}$, which coincides
with $\{\bfe_1,\bfe_2,\bfe_3\}$ at the point $x_0$. We still denote the dual basis by $\{\ell_1(x),\ell_2(x),\ell_3(x)\}$, so that
$$\ell_i(x)\cdot \bfe_j(x)~=~\delta_{ij}\,,\qquad\quad i,j\in \{1,2,3\}.$$
Note that $\{\ell_1(x),\ell_2(x),\ell_3(x)\}$ reduces to
$\{\ell_1,\ell_2,\ell_3\}$ when $x=x_0$.

By construction, we have
\begin{eqnarray}\label{v1}D{\bf w}(x)\,\bfe_1(x)&=&a_{11}\,\bfe_1(x)+a_{21}\,\bfe_2(x),\\
\label{v2}D{\bf w}(x)\,\bfe_2(x)&=&a_{12}\,\bfe_1(x)+a_{22}\,\bfe_2(x),\\
\label{v3}D{\bf w}(x)\,\bfe_3(x)&=&a_{13}\,\bfe_1(x)+a_{23}\,\bfe_2(x)+a_{33}\,\bfe_3(x), \end{eqnarray}
where $a_{ij} \;(i,j=1,2)$ and $a_{i3}\;(i=1,2,3)$ depend on $x$.
Thus, it should hold that  $$M(x)=\left(\begin{array}{cc}a_{11}&a_{12}\\
a_{21}&a_{22}\end{array}\right).$$
By (\ref{v1}), (\ref{v2}) and (\ref{v3}), we have the following relationships
\begin{equation}\label{aij}a_{ij}=\ell_i(x)D{\bf w}(x){\bfe}_j(x),\;\;i,j=1,2; \quad
a_{i3}(x)=\ell_i(x)D{\bf w}(x){\bfe}_3(x),\;i=1,2,3.\end{equation}
When returning to the point $x_0$, since $\rank(I+t_0D{\bf w}(x_0))=1$ and $\ell_3$ is perpendicular to the right eigenvectors of $\lambda_1, \lambda_2$, we know that the right eigenvectors of $\lambda_1, \lambda_2$ should lie in the subspace $E(x_0)$ and are linearly independent. Then, without loss of generality,  $\bfe_1(x_0)=\bfe_1$ and $\bfe_2(x_0)=\bfe_2$ can be selected to be the right eigenvectors of $\lambda_1$ and $\lambda_2$, respectively.

We now study the eigenvalues of the matrix $M(x)$ for $x$ close to $x_0$. By (\ref{M2}), in order to derive the formulas for ${\bf b}_{ij}$, it suffices to study  $a_{ij}(x_0+y) \,(i,j=1,2)$ when $y\approx0$. Indeed, for $y\approx0$ we have
\begin{eqnarray}a_{ij}(x_0+y)&=&\ell_i(x_0+y)D{\bf w}(x_0+y){\bfe}_j(x_0+y)\nonumber\\
&\approx&\Big(\ell_i(x_0)+y^\top D\ell_i(x_0)\Big)\Big(D{\bf w}(x_0)+D^2{\bf w}(x_0)y\Big)\Big(\bfe_j(x_0)+D\bfe_j(x_0)y\Big)\nonumber\\
&\approx&\ell_iD{\bf w}(x_0){\bfe}_j+\ell_iD{\bf w}(x_0)D\bfe_j\,y+\ell_iD^2{\bf w}(x_0)\,y\, \bfe_j+y^\top D\ell_i\, D{\bf w}(x_0)\bfe_j\nonumber\\
\label{aijn}&=&\lambda_j\delta_{ij}+\ell_iD^2{\bf w}(x_0)\,y\, \bfe_j+\ell_iD{\bf w}(x_0)D\bfe_j\,y+y^\top D\ell_i\,D{\bf w}(x_0)\bfe_j.\end{eqnarray}
The last two terms in (\ref{aijn}) contain only the first derivatives of the vector field ${\bf w}(x)$, which will disappear in the perturbation ${\bf v}(\theta)(x)=\bfw+\theta \,\bfp(x)$ with $\bfp$ being quadratic in $x-x_0$. Thus, without any ambiguity we neglect these two terms and simply write
\begin{equation}\label{a}a_{ij}(x_0+y)\approx\lambda_j\delta_{ij}+\ell_iD^2{\bf w}(x_0)\,y\, \bfe_j,\quad\text{for}\quad y\approx0,\end{equation}
which further implies that $$\ell_iD^2{\bf w}(x_0)\,y\, \bfe_j={\bf b}_{ij}\cdot y.$$

Calling
\begin{equation}\label{mk}M_k\doteq\frac{\partial}{\partial x_k}\Big(D{\bf w}(x_0)\Big),\end{equation}
 it follows from (\ref{a}) that
\begin{equation}\label{b1}{\bf b}_{ij}=\left(\begin{array}{c}\ell_i\,M_1\,{\bf e}_j\\
\ell_i\,M_2\,{\bf e}_j\\\ell_i\,M_3\,{\bf e}_j\end{array}\right), \quad i,\;j=1,\,2.\end{equation}

{\bf 5.}
Introduce
\begin{equation}\label{nm}Q\doteq\Big({\bf b}_{12}, {\bf b}_{21}, {\bf b}_{11}+{\bf b}_{22}\Big),\end{equation}
which depends on the vector field $\bfw$ and is evaluated at $x$. Sometimes we write $Q=Q(\bfw)$.

Assume $Q(\bfw)$ does not have full rank at some point $x_0$. Then, we consider the following perturbation
\bel{onep}\bfv(\theta)(x)=\bfw(x)+\theta\,\bfp(x).\eeq
Here $$\bfp(x)=\varphi(x-x_0)\sum_{i,j,k=1}^3a_{ijk}(x_i-x_{0i})(x_j-x_{0j})\bfe_k,$$
where $\varphi$ is defined by (\ref{test}) and the constants $a_{ijk}$ can be chosen arbitrarily.
Then, by (\ref{b1}) and (\ref{nm}) we have the perturbed matrix
\bel{perq}Q(\bfv)=Q(\bfw)+\theta \,Q(\bfp),\eeq
where  $Q(\bfp)$ is given by
$$Q(\bfp)=\Big(A_1\,\bfe_2, A_2\,\bfe_1, A_1\,\bfe_1+A_2\,\bfe_2\Big),$$
in which
$$A_i=\left(\begin{array}{ccc}a_{11i}+a_{11i}&a_{21i}+a_{12i}&a_{31i}+a_{13i}\\[2mm]
a_{12i}+a_{21i}&a_{22i}+a_{22i}&a_{32i}+a_{23i}\\[2mm]
a_{13i}+a_{31i}&a_{23i}+a_{32i}&a_{33i}+a_{33i}\end{array}\right).$$

Denote  $$A_1=(a_1,\; a_2,\; a_3)^\top\quad A_2=(b_1,\; b_2,\; b_3)^\top.$$
Since $a_{ijk}$ are arbitrary, we can select
\bel{ch}a_1=-\ell_2,\;\;a_2=\ell_3,\;\;a_3=-\ell_1,\;\;b_1=\ell_3,\;\;b_2=-\ell_1,\;\;b_3=\ell_3,\eeq
where $\ell_i\,(i=1,2,3)$ are the dual basis of $\bfe_j\,(j=1,2,3)$, by (\ref{orth}) we obtain
$Q(\bfp)=-I$. Thus, it follows from (\ref{perq}) that
$$Q(\bfv)=Q(\bfw)-\theta\,I.$$
It is easy to see that the above matrix has full rank for $\theta\in(0, \lambda_*)$, where $\lambda_*$ is the smallest one among the absolute values of nonzero eigenvalues of $Q(\bfw)$.  In other words,  the vectors $\bfb_{12}, \bfb_{21}$ and $\bfb_{11}+\bfb_{22}$ are linearly independent for $x$ in a neighborhood of  $x_0$ and for $\theta>0$ sufficiently small.  Hence, we prove that the vector field, denoted by $\F_2\subset\F_1\subset\C_0^3$, satisfying both  the conditions {\bf (FR)} and  {\bf (LI)} is dense in $\C_0^3$. The proof is completed. 
\endproof

By the previous lemma, for a generic velocity field
$\bfw\in \C^3_0(\R^3;\R^3)$, new singularities
are NOT formed at the isolated points $(t_0, x_0)$ where  rank$(I+ t_0 D_x\bfw(x_0))=1$.
The next lemma describes the local minima
of the function $\tau$ introduced at (\ref{taud}), where these singularities can actually be formed.

\begin{theorem}\label{l:83}  There exists an open dense set $\F_3\subset
\C^3_0(\R^3;\R^3)$ of vector fields with the following property.
For every $\bfw\in \F_3$, the  function $\tau(\cdot)$, defined in \eqref{taud},
 has a finite number of local minima
with $(\bar t, \bar x)\in
[0,T]\times\R^3$. At every such point, one has
 Rank$(I+ \bar t D_x\bfw(\bar x))=2$. Moreover,
the Hessian matrix $H(\bar x)= {\partial^2\tau\over\partial x_i\partial x_j}$
is strictly positive definite.
\end{theorem}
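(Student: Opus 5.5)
We would take $\F_3\subset\F_2$, where $\F_2$ is the open dense set of Lemma~\ref{lemma3}, and obtain it by imposing one more transversality condition. By Lemma~\ref{lemma2}, $\S_0=\emptyset$. By Lemma~\ref{lemma3}, no local minimum of $\tau$ occurs at a point of $\S_1$. Hence at any local minimum $(\bar t,\bar x)$ we have Rank$(I+\bar tD\bfw(\bar x))=2$, so $-1/\bar t$ is a simple eigenvalue of $D\bfw(\bar x)$.

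Near such a point, the smallest eigenvalue $\lambda_1(x)$ is simple and depends smoothly ($\C^2$, since $\bfw\in\C^3$) on $x$. The relevant branch of the multifunction $\tau$ is therefore the single-valued smooth function $\tau(x)=-1/\lambda_1(x)$. It has a local minimum exactly where $\lambda_1$ has a local minimum, and
$$D^2\tau=\lambda_1^{-2}D^2\lambda_1\quad\hbox{at critical points.}$$
The theorem thus reduces to a generic statement about $\lambda_1$: at every critical point of $\lambda_1$ in the compact region where $\lambda_1\le -1/T$, the Hessian is nondegenerate. Nondegenerate critical points are isolated, so compactness gives finiteness, and at a minimum a nondegenerate Hessian is positive definite.

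For density, we fix $\bfw\in\F_2$ and a point $x_0$ with $\nabla\lambda_1(x_0)=0$ and $\lambda_1(x_0)\le -1/T$. Let $\bfe_1$ be the right eigenvector and $\ell_1$ the left eigenvector of $\lambda_1(x_0)$, normalized so that $\ell_1\bfe_1=1$. We perturb
$$\bfv(\theta)=\bfw+\varphi(x-x_0)\Big(\sum_{k}\theta_k(x_k-x_{0k})+\sum_{j\le k}\theta_{jk}(x_j-x_{0j})(x_k-x_{0k})\Big)\bfe_1,$$
which gives $3+6=9$ parameters. By first-order eigenvalue perturbation, $\partial\lambda_1=\ell_1\,\partial(D\bfv)\,\bfe_1$. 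The cubic terms $\theta_{jk}$ change $\nabla\lambda_1$ at $x_0$ only through second derivatives of $\bfv$, producing $\partial_{x_m}\lambda_1$ contributions $2\theta_{mm}$, $\theta_{jm}$, etc. Hence the map
$$\Phi:(x,\theta)\mapsto\nabla_x\lambda_1(x;\theta)\in\R^3$$
has $\theta$-derivative of rank $3$ at $(x_0,0)$, so $\Phi\pitchfork\{0\}$. Equivalently, and perhaps more cleanly, we can use perturbations cubic in $x-x_0$ to adjust $D^2\lambda_1$ by an arbitrary symmetric matrix, and apply transversality of $x\mapsto\nabla\lambda_1$ to $\{0\}$, which means $\det D^2\lambda_1\neq0$ at zeros.

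By compactness, as in Step~4 of Lemma~\ref{lemma2}, we cover the compact set
$$\{x:\ \lambda_1(x)\le -1/T,\ \nabla\lambda_1=0\}$$
by finitely many neighborhoods and combine the perturbations. The Transversality Theorem then yields $\theta$ arbitrarily small with $x\mapsto\nabla\lambda_1(x;\theta)$ transversal to $\{0\}$, i.e.\ all critical points are nondegenerate. The conditions of $\F_2$ persist for small $\theta$ since $\F_2$ is open. Openness of $\F_3$ follows as in Step~5 of Lemma~\ref{lemma2}. Nondegenerate critical points persist and move continuously under $\C^3$-small perturbations by the implicit function theorem, which uses that $\nabla\lambda_1$ is $\C^1$ in $\C^3$ topology. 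Away from them, $|\nabla\lambda_1|$ is bounded below on the compact set, and lower semicontinuity of rank keeps the rank-$2$ property. Definiteness of a positive definite Hessian also persists.

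The main obstacle is regularity and bookkeeping. We need $\lambda_1$ to be a genuinely smooth, simple eigenvalue near every candidate minimum. This is why $\S_1$ must be excluded first via Lemma~\ref{lemma3}, and why the closeness of other eigenvalues at points where $\lambda_1\le-1/T$ must be handled. We also must check that the $\C^3$ data give enough regularity, since $D^2\lambda_1$ involves third derivatives of $\bfw$. A further subtlety is the boundary $t=T$ in $[0,T]$: there we may either shrink to $\lambda_1<-1/T'$ for some $T'>T$, or note that for generic $\bfw$ no critical value equals $-1/T$.
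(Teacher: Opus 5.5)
Your reduction fails at its first step. The condition $\rank(I+\bar tD\bfw(\bar x))=2$ only says that $-1/\bar t$ has geometric multiplicity one as an eigenvalue of $D\bfw(\bar x)$. It does not make the eigenvalue algebraically simple. If $D\bfw(\bar x)$ has a nontrivial Jordan block at $-1/\bar t$, the rank is still $2$, but two eigenvalues collide there, behaving locally like $\mu(x)\pm\sqrt{\Delta(x)}$. So $\lambda_1$ is not differentiable there, and on one side it is not even real.

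Such points are not removed by Lemmas~\ref{lemma2}--\ref{lemma3}, and they are not non-generic. For an open set of $\bfw\in\C^3_0$, the locus where two real eigenvalues of $D\bfw$ merge into a complex pair is a surface in $\R^3$. Wherever the merging value is $\le -1/T$, this surface produces points of $\S_2$. You therefore never show that a local minimum of $\tau$ cannot sit at such a point. Your compactness of $\{\lambda_1\le -1/T,\ \nabla\lambda_1=0\}$, the covering argument, and the openness step also all silently treat $\lambda_1$ as a $\C^2$ function on a compact region, which fails near this locus. Generically such points are indeed not minima: on the side $\Delta>0$ the branch $\mu-\sqrt{\Delta}$ drops by order $\sqrt{\Delta}$, which beats the linear variation of $\mu$. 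But proving this needs an extra transversality condition, such as $\nabla\Delta\neq0$ on the collision surface, plus a separate treatment of triple-eigenvalue points.

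The paper's route sidesteps all of this because it never uses smoothness of eigenvalues. It works with $\psi(t,x)=\det(I+tD\bfw(x))$ and makes $\Psi=(\psi,\nabla_x\psi)$ transversal to $\{{\bf 0}\}$ in $(t,x)$-space, using the Schur complement $\phi$ and the perturbation $p_3$. A local minimum must be a zero of $\Psi$: if $\nabla_x\psi\neq0$ at a zero of $\psi$, the implicit function theorem produces zeros at slightly earlier times. At a zero of $\Psi$, the first row of $D_{(t,x)}\Psi$ is $(\psi_t,0,0,0)$, so the full-rank condition (PR) forces $\psi_t\neq0$. Algebraic simplicity is thus obtained, not assumed, and $\tau_{x_ix_j}=-\psi_{x_ix_j}/\psi_t$ then gives the nondegenerate Hessian.

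Your eigenvalue computation is correct wherever the eigenvalue is simple. The $\theta$-derivative of $\nabla\lambda_1(x_0)$ is $S\mapsto S\bfe_1$ over symmetric $S$, which is onto since $\bfe_1\neq0$. Your entries $2\theta_{mm},\theta_{jm}$ are only right in coordinates where $\bfe_1$ is a coordinate vector. The cleanest repair is to recast your transversality argument in terms of $\psi$ as above. Note also that $\tau$ is a multifunction, so local minima can come from the branches $-1/\lambda_2$ and $-1/\lambda_3$. Your argument, phrased only for the smallest eigenvalue, does not cover those branches.
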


{\bf Proof.}
{\bf 1.} Based on the results in Lemmas~\ref{lemma2} and \ref{lemma3}, by using the same argument in Lemma~5.3 in \cite{BCH}, we can assume  that there exists an open dense set $\F\subset\C_0^3(\R^3;\R^3)$, such that the set
\bel{rank3}\S_2=\big\{(t,x)\in[0,T[\times\R^3;\;\;\rank(I+tD_x\bfw(x))=2\big\}\eeq
is an embedded manifold of dimension 3.  Moreover, the set
\bel{cps2}\bega{rl}\ov \S_2&=~\Big\{(t,x)\in [0,T]\times \R^3\,;~~\text{rank}(I+tD_x{\bf w}(x))=2\Big\}\\[3mm]
&=~
\Big\{(t,x)\in [0,T]\times\R^3\,;~~\text{rank}(I+tD_x{\bf w}(x))\leq 2\Big\}\\[3mm]
&~~\left\backslash
\Big\{\text{Finitely many points in}\; [0,T]\times\R^3;\;\rank(I+tD_x\bfw(x))=1\Big\}\right.
\enda\end{equation}
is compact.

Next we need to show there exists an open subset $\F_3\subset\F\subset\C_0^3$ such that for $\bfw\in\F_3$,  the system of four scalar equations
\bel{scal}\Psi(t,x)=(0,0,0,0),\eeq
\bel{scal1}\Psi\doteq\Big(\det(I+tD_x\bfw(x)),\quad\nabla_x\det(I+tD_x\bfw(x))\Big),\eeq
has finitely many solutions on $[0,T]\times\R^3$.

{\bf 2.} For any $\bfw\in\F$, consider the point $(t_0,x_0)\in\ov\S_2$. As in  the proof of
 Lemma~5.3 in \cite{BCH}, we can choose a local system of coordinates such that
\bel{rank4} I+t_0D\bfw(x_0)~=~
\left[\bega{cc} A & \bfb\cr \bfc^\top & d\enda \right],\eeq
where $A$ is a $2\times2$ invertible matrix, while $d-\bfc^\top A^{-1}\bfb=0$.

In a neighborhood of the point $(t_0,x_0)$, the condition $\rank(I+tD_x\bfw(x))=2$ can be equivalently written as
\bel{ceq}\phi(t,x)\doteq d-\bfc^\top A^{-1}\bfb=0.\eeq

We shall consider the system of 4 scalar equations in 4 variables
\bel{sle}\phi(t,x)=\phi_{x_1}(t,x)=\phi_{x_2}(t,x)=\phi_{x_3}(t,x)=0.\eeq

{\bf 3.} We construct a family of perturbations of the vector field $\bfw$ depending on 4 parameters
$\theta_0, \theta_1, \theta_2$ and $\theta_3$, namely,
\bel{pb}\bfv(\theta)(x)=\bfw(x)+\bfp(x,\theta).\eeq
Here the vector field $\bfp=(p_1,p_2,p_3)^\top$ is defined as
\bel{pdef}p_1(0,\theta)=p_2(0,\theta)=0, \;\;p_3=\theta_0(x_3-x_{03})\varphi+\sum_{k=1}^3\theta_k(x_k-x_{0k})(x_3-x_{03})\varphi,\eeq
where $\varphi\in\C_0^{\infty}(\R^n)$ is defined in (\ref{test}).
Then, the corresponding matrix $$I+tD_x[\bfv(\theta)](x)=I+tD_x\bfw(x)+tD_x\bfp(x,\theta)$$
becomes
\bel{rank5}I+tD_x[\bfv(\theta)](x)=\left[\bega{cc} A & \bfb\cr \bfc^\top & d\enda \right]+
\left[\bega{cc} \bf0 & \bf0\cr \bfc_1^\top & d_1\enda \right]=\left[\bega{cc} A & \bfb\cr \bfc^\top+\bfc_1^\top & d+d_1\enda \right],\eeq
where $$\bfc_1^\top=t\left(\frac{\partial p_3}{\partial x_1},\;\frac{\partial p_3}{\partial x_2}\right),\quad d_1=t\,\frac{\partial p_3}{\partial x_3}.$$
The corresponding scalar equation (\ref{ceq}) then takes the form
\bel{neq}\widetilde{\phi}(t,x)\doteq(d+d_1)-(\bfc^\top+\bfc_1^\top)A^{-1}\bfb=0.\eeq

We claim that the map
\bel{nmap}(t,x,\theta)\mapsto \Phi(t,x,\theta)\doteq \big(\widetilde{\phi},\;\widetilde{\phi}_{x_1},\;\widetilde{\phi}_{x_2},\;\widetilde{\phi}_{x_3}\big)\eeq
is transversal to the zero-dimensional manifold $\{\bf0\}\in\R^4$ in a neighborhood of $(t_0,x_0,0)$.
Indeed, at the point $(t,x,\theta)=(t_0,x_0,0)$ direct computation shows that
$$D_{\theta}\Phi=\left(\begin{array}{cccc}\widetilde{\phi}_{\theta_0}&\widetilde{\phi}_{\theta_1}&\widetilde{\phi}_{\theta_2}
&\widetilde{\phi}_{\theta_3}\\[2mm]
\widetilde{\phi}_{x_1\theta_0}&\widetilde{\phi}_{x_1\theta_1}&\widetilde{\phi}_{x_1\theta_2}&\widetilde{\phi}_{x_1\theta_3}\\
[2mm]\widetilde{\phi}_{x_2\theta_0}&\widetilde{\phi}_{x_2\theta_1}&\widetilde{\phi}_{x_2\theta_2}&\widetilde{\phi}_{x_2\theta_3}\\
[2mm]\widetilde{\phi}_{x_3\theta_0}&\widetilde{\phi}_{x_3\theta_1}&\widetilde{\phi}_{x_3\theta_2}&\widetilde{\phi}_{x_3\theta_3}\end{array}\right)
=t_0\left(\begin{array}{cccc}1&0&0&0\\
[2mm]*&1&0&0\\[2mm]
*&*&1&0\\[2mm]*&*&*&2\end{array}\right),$$
which has full rank. Thus, we conclude that the map $\Phi$ is transversal to the zero manifold $\{\bf0\}\in\R^4$, for $(t,x)$ in a neighborhood $\N_{(t_0,x_0)}$ of $(t_0,x_0)$ and $\theta$ in a neighborhood of $0$.

{\bf 4.} Consider the  set of all $(t,x)\in [0,T]\times\R^3$
for which the four identities in (\ref{scal}) hold.   By compactness, it can be covered
by finitely many neighborhoods $\N_{(t_i, x_i)}$, $i=1,\ldots,m$.
For each $i$, we can construct a family of perturbations $\bfv^i(\theta)$, as in
(\ref{pb}), such  that the map (\ref{nmap}) is transversal to the zero manifold
$\{\bf 0\}\subset\R^4$, restricted to $\N_{(t_i, x_i)}$.
Gathering together all these
perturbations we obtain a family of vector fields $\bfv(\theta)$ depending on
$4m$ parameters, such that the corresponding map (\ref{nmap}) is transversal
to the zero manifold on the whole domain $[0,T]\times\R^3$.
By the similar discussion in the proof of step {\bf 3} in Theorem~5.4 in \cite{BCH} and the Transversality Theorem, we conclude that for almost all $\theta\in\R^{4m}$, the map is transversal to the zero-dimensional manifold  $\{\bf0\}\in\R^4$.

{\bf 5.} Now we define the subset $\F_3\subset\F$, which satisfies the following property
\begin{itemize}
\item[{(\bf PR)}]{\it At every point $(t,x)\in[0,T]\times\R^3$ where (\ref{scal}) holds, the Jacobian matrix $D_{(t,x)}\Psi(t,x)$ has the full rank 4.}
\end{itemize}
By the preceding discussion, we find $\F_3$ is dense in $\C_0^3(\R^3;\R^3)$. In addition, for every $\bf w\in \F_3$, the system (\ref{scal})
has at most finitely many solutions on $[0,T]\times\R^3$.

Next we show the openness of $\F_3$ by contradiction.  Assume there exists $\bfw\in\F_3$ and a sequence of vector fields $\bfw_{\nu}\notin\F_3$
converging to $\bfw$ in the $\C^3$ norm.  Then there exist points $(t_{\nu}, x_{\nu})\in[0,T]\times\R^3$ such that the functions $\Psi_{\nu}$ in
(\ref{scal}) satisfy
$$\Psi_{\nu}(t_{\nu},x_{\nu})=(0, 0, 0, 0),\quad \rank\big[D_{(t,x)}\Psi_{\nu}(t_{\nu},x_{\nu})\big]\leq3$$
for all $\nu$. By possibly choosing a subsequence, we can obtain the convergence $(t_{\nu}, x_{\nu})\rightarrow (\ov t,\ov x)$.
Then, by noting the convergence $\bfw_{\nu}\rightarrow \bfw$ in $\C^3$, we have
$$\Psi(\ov t, \ov x)=(0, 0, 0, 0),\quad \rank\big[D_{(t,x)}\Psi (\ov t, \ov x\big]\leq3,$$
which contradicts our assumption.

{\bf 6. }Finally, call $\psi(t,x)=\det[I+tD_x\bfw(x)]$. Let $\tau=\tau(x)$ be the function implicitly defined by $\psi(\tau(x),x)=0$ as in
(\ref{taud}). At any point where
\bel{tan}\psi(t,x)=\psi_{x_1}(t,x)=\psi_{x_2}(t,x)=\psi_{x_3}(t,x)=0,\eeq
we should have $\psi_t(t,x)\neq0$. Then, an explicit computation shows
\bel{tau2}\tau_{x_ix_j}=-\frac{\psi_{x_ix_j}}{\psi_t},\;\;i,\,j=1,2,3.\eeq
By construction, every function $\bfw\in\F_3$ satisfies the property ({\bf PR}). Therefore,
at every point $(t,x)\in[0,T]\times\R^3$ where (\ref{tan}) holds, we have
\bel{gma}D_{(t,x)}\Psi=\left[\begin{array}{cccc}\psi_t&\psi_{x_1}&\psi_{x_2}&\psi_{x_3}\\[2mm]
\psi_{x_1t}&\psi_{x_1x_1}&\psi_{x_1x_2}&\psi_{x_1x_3}\\[2mm]
\psi_{x_2t}&\psi_{x_2x_1}&\psi_{x_2x_2}&\psi_{x_2x_3}\\[2mm]
\psi_{x_3t}&\psi_{x_3x_1}&\psi_{x_3x_2}&\psi_{x_3x_3}\end{array}\right]=
\left[\begin{array}{cccc}\psi_t&0&0&0\\[2mm]
\psi_{x_1t}&\psi_{x_1x_1}&\psi_{x_1x_2}&\psi_{x_1x_3}\\[2mm]
\psi_{x_2t}&\psi_{x_2x_1}&\psi_{x_2x_2}&\psi_{x_2x_3}\\[2mm]
\psi_{x_3t}&\psi_{x_3x_1}&\psi_{x_3x_2}&\psi_{x_3x_3}\end{array}\right]\eeq
has rank 4. As a consequence, the Hessian matrix
$$\left[\begin{array}{ccc}\tau_{x_1x_1}&\tau_{x_1x_2}&\tau_{x_1x_3}\\[2mm]
\tau_{x_2x_1}&\tau_{x_2x_2}&\tau_{x_2x_3}\\[2mm]
\tau_{x_3x_1}&\tau_{x_3x_2}&\tau_{x_3x_3}\end{array}\right]=-\frac{1}{\psi_t}
\left[\begin{array}{ccc}\psi_{x_1x_1}&\psi_{x_1x_2}&\psi_{x_1x_3}\\[2mm]
\psi_{x_2x_1}&\psi_{x_2x_2}&\psi_{x_2x_3}\\[2mm]
\psi_{x_3x_1}&\psi_{x_3x_2}&\psi_{x_3x_3}\end{array}\right]$$
has rank 3. Thus, the proof is completed. 
\endproof

\section{Evolution of singular hypersurfaces}
\label{s:2}
\setcounter{equation}{0}
Now we consider again the Cauchy problem (\ref{SP})-(\ref{ic}).
Note that outside the singularities, the density $\rho$ and the velocity $\bfv$  of the pressureless gas are
determined by free flow, as in (\ref{Eqn:velocityTransport})-(\ref{Eqn:densityTransport}).

Following \cite{BCH}, we  consider a singular surface $\Sigma^\top\subset\R^3$.  For a fixed time $t$,
points on this surface will be denoted by
$$y(\zeta,t)~=~y(\zeta_1,\zeta_2,t)~\in ~\Sigma^\top.$$
We think of $\zeta\in \R^2$ as a Lagrangian parameter, labeling a particle
which moves on this surface. As time increases, $t\mapsto y(\zeta,t)$ denotes the location of a moving particle. Let $\bfv(\zeta,t)={d\over dt} y(\zeta,t)$ be the velocity of this particle and
$\eta(\zeta,t)$ be the mass density at the point $y(\zeta,t)$.

For any Borel set $A\subset\R^{2}$, consider the portion of the surface
$\Sigma^{A,t}=\{y(t,\zeta)\,;~\zeta\in A\}\subset\Sigma^\top$.
The total area of $\Sigma^{A,t}$ is then computed by
$$\hbox{Area}(\Sigma^{A,t})~=~\int_A \det\left(\bfn, {\partial y\over\partial \zeta_1}, {\partial y\over\partial \zeta_{2}}\right)\, d\zeta,$$where $\bfn$ is the unit normal to the surface $\Sigma^\top$ at 
the point $y(\zeta,t)$.
The total mass contained in $\Sigma^{A,t}$ is
$$\int_A \eta(\zeta,t)\, d\zeta.$$
Introduce the vector
\bel{etx}
\bfe(\zeta,t)~\doteq~\det \left(\bfn, {\partial y\over\partial \zeta_1}, {\partial y\over\partial \zeta_{2}}\right)\, \bfn(\zeta,t)={dy\over d\zeta_1}\times {dy\over d\zeta_2}.\eeq

Let
$\bfv^+(t,x), \bfv^-(t,x), \rho^+(t,x), \rho^-(t,x),$
be respectively the velocity and the density (w.r.t.~3-dimensional Lebesgue measure)
of the flow, in front of and behind the hypersurface.
Then,
the conservation of mass yields
\bel{o2} {d\over dt} \eta~=~\la \bfv-\bfv^+, \bfe\ra \,\rho^+
+ \la \bfv^--\bfv, \bfe\ra\, \rho^- .\eeq
Notice that the  right-hand side of (\ref{o2}) measures the rate
at which additional mass sticks to the surface $\Sigma^\top$ from both sides.
Here and in the sequel, it is understood that
all the functions $\rho^\pm$ and $\bfv^\pm$ are evaluated at the point
$(t, y(\zeta,t))$.

In addition, the conservation of momentum implies
\bel{o3}{d\over dt} (\eta\,\bfv)~=~\la \bfv-\bfv^+, \bfe\ra \,\rho^+\bfv^+
+ \la \bfv^--\bfv, \bfe\ra \rho^-\bfv^-.\eeq
Together with (\ref{o2}), this yields 

\bel{o4}
{d\over dt} \bfv~=~{1\over \eta}\cdot
\Big[\langle \bfv-\bfv^+, \bfe\rangle \,\rho^+(\bfv^+-\bfv)
+ \langle \bfv^--\bfv, \bfe\rangle \rho^-(\bfv^--\bfv)\Big].
\eeq

Setting ${\bfp}=\frac{\partial y}{\partial\zeta_1}, {\bf q}=\frac{\partial y}{\partial\zeta_2}$ and observing that $\frac{\partial^2 y}{\partial\zeta_1\partial t}=\frac{\partial \bfv}{\partial\zeta_1}, \frac{\partial^2 y}{\partial\zeta_2\partial t}=\frac{\partial \bfv}{\partial\zeta_2}
$, we can derive the following system 
\bel{ssy}
\left\{\bega{rl}y_t&=~\bfv,\\[3mm]
\eta_t&=~\la\bfv-\bfv^+, {\bfp}\times{\bf q}\ra\rho^+-\la\bfv-\bfv^-, {\bfp}\times{\bfq}\ra\rho^-,\\[3mm]
\bfv_t&=~\ds \la \bfv-\bfv^+, {\bfp}\times{\bfq}\ra\rho^+\, {\bfv^+-\bfv\over\eta}-\la\bfv-\bfv^-,{\bfp}\times{\bfq}\ra\rho^-\, {\bfv^--\bfv\over\eta}\,,\\[3mm]
\bfp_t - \bfv_{\zeta_1}&= ~0,\\[3mm]
\bfq_t - \bfv_{\zeta_2}&= ~0.
\enda
\right.\eeq
We found that  the semilinear
system (\ref{ssy}) is not hyperbolic, as in the 2D case. As a consequence,
in general we expect that  the Cauchy problem describing the evolution of a
singular hypersurface  will be ill-posed.
However, existence and uniqueness of local solutions can still be achieved, but only for analytic initial data.

\section{Structure of solutions near the point of singularity formation}
\label{s:4}
\setcounter{equation}{0}

In this section, we shall study the formation of new singularities  in three spatial dimensions. Assuming that the initial data $\bar{\rho}, \bfw$ in (1.2) are analytic, we wish to construct
a solution of (\ref{SP}) beyond the time $t_0$ when the singularity first appears. More precisely, we are going to show that the solution is smooth outside a singular hypersurface $\mathcal{S}$, where a positive amount of mass is concentrated; see Figure 1. Outside this surface $\mathcal{S}$, the velocity and the density
of the pressureless flows are still computed by (1.3) and (1.4) respectively. To completely describe this solution, it thus remains to determine the position of the singular hypersurface $\mathcal{S}$ and the velocity and density of particles along $\mathcal{S}$. 

As in \cite{BCH2}, we seek to construct a local map of the following form
$
(t,\zeta)\longmapsto
\bigl(\eta(t,\zeta),y(t,\zeta)\bigr),
$
where $y(t,\zeta)$ denotes the location of a particle along the singular surface $\mathcal{S}$, and $\eta(t,\zeta)$ denotes its surface density.



Recall that we assume ({\bf A1}) and ({\bf A2}), which can be found in the introduction. Define  
\begin{align}
    \label{tau}\tau(x) := -\frac{1}{\lambda_1(x)}.
\end{align}
By ({\bf A2}), it follows that
$\tau$ has a positive minimum at $x_0$, and the Hessian matrix $D^2\tau(x_0)$ is also strictly positive
definite. By Lemma 3.4, the assumption ({\bf A2}) on the Hessian matrix
is indeed satisfied for generic initial data in $\mathcal{C}^3$.

Let $\xi(t,x):=x+t\mathbf w(x)$ 
be the Lagrangian flow map. By Eq. (\ref{Eqn:densityTransport}), we find that 
$$
    \lim_{t\uparrow\tau(x_0)}\rho(t,\xi(t,x_0)) 
    =\infty,
$$
since at $t=\tau(x)$, the derivative of the flow map $ D_x\xi=I+tD_x\bfw(x) $
becomes singular and ceases to be locally invertible. Subsequently, the inverse of the flow map bifurcates into three branches, 
\begin{equation}
    X_1=X_1(t,y),\qquad x=x(t,y),
   \qquad X_2=X_2(t,y)
    \label{Eqn:FlowMapBifurcation}
\end{equation}
satisfying $$
    \xi(t,X_1)=\xi(t,x)=\xi(t,X_2)=y, 
$$
and 
$$t>\tau(x), \qquad t<\tau(X_1), \qquad t<\tau(X_2).  
$$

Under the assumptions ({\bfA1}) and ({\bfA2}), the time of first singularity formation is given by
\begin{align}
    t_0:= -\frac{1}{\lambda_1(x_0)}
\end{align}
Owing to the fact that the eigenvalues are distinct and $\bfw$ is analytic, after possibly restricting to a sufficiently small neighborhood, we may choose the normalized right eigenvectors $r_i(x) (i=1,2,3)$ corresponding to the eigenvalues $\lambda_i(i=1,2,3)$, respectively. 

Define the function
\begin{equation}
    F(x):=\left\langle
    \nabla\lambda_1(x),
    r_1(x)
    \right\rangle.
\end{equation} 
Since we have $\nabla\lambda_1(x_0)=0,$
it follows that
\begin{equation}
    \nabla F(x_0)
    =D^2\lambda_1(x_0)r_1(x_0), 
\end{equation} 
which does not vanish due to the generic condition ({\bfA2}). Hence, by the implicit function theorem,  the level set  
\begin{equation}
    \Gamma^*
    :=
    \left\{
    x\in\mathcal D:
    F(x)=0
    \right\}\label{g*}
\end{equation} 
is locally indeed an analytic surface. We then seek to parametrize this surface. To this end, we construct two analytic tangent vector fields. Let 
\begin{equation*}
    \tilde r_2 := r_2 - \frac{\nabla F \cdot r_2}
    {\nabla F \cdot r_1}r_1,\quad
  \tilde r_3 := r_3 - \frac{\nabla F \cdot r_3}
    {\nabla F \cdot r_1}r_1,\quad
r_2^* := \frac{\tilde r_2}{\left\| \tilde r_2 \right\|},\quad
 r_3^* := \frac{\tilde r_3}{\left\| \tilde r_3 \right\|}.
\end{equation*} 
By construction, $r_2^*(x)$, $r_3^*(x)$ belong to the tangent space $T_x \Gamma^*$ for any $x\in \Gamma^*$ near $x_0$, and they can be viewed as a basis of  $T_x \Gamma^*$. Now we may choose the following  reparametrization 
\begin{equation*}
    \gamma^*:U\subset\mathbb R^2\to \Gamma^*,
\end{equation*} 
such that
$$
    \gamma^*(0)=x_0,
    \quad
    D_\beta\gamma^*(0)
    =
    [r_2^*(x_0),r_3^*(x_0)],\quad \beta\doteq(\beta_1,\beta_2)^T.
$$

Define 
$$
    R_0(\beta)
    :=
    D_\beta\gamma^*(\beta)
    =
    [\partial_{\beta_1}\gamma^*(\beta),
    \partial_{\beta_2}\gamma^*(\beta)].
$$
Obviously, $R_0(\beta)$ is a basis of tangent space $T_x\Gamma^*$.
Then, there exists an analytic invertible matrix $B(\beta)\in\mathbb R^{2\times2}$ satisfying 
$$
    R_0(\beta)
    =[r_2^*(\gamma^*(\beta)),
    r_3^*(\gamma^*(\beta))]B(\beta), 
$$
where by construction, $B(0)=I $ and $I$ is an identity matrix. 

Next, for each $\beta\in U$, we define
$$
    \sigma
    \longmapsto
    x(\sigma,\beta)
$$
satisfying the ODE
$$
    \partial_\sigma x(\sigma,\beta)
    =
    r_1(x(\sigma,\beta)),
    \qquad
    x(0,\beta)
    =
    \gamma^*(\beta).
$$
Then, let 
$$
    \theta
    :=(\sigma,\;\beta_1,\;\beta_2)^{\top}, 
$$
and 
$$
    J(\sigma,\beta):=
    D_\theta x(\sigma,\beta)
    =
   ( x_\sigma,\;
    x_{\beta_1},\;
    x_{\beta_2}).
$$
Note that $J(\sigma,\beta)$ now forms a basis of three dimensional space $\mathbb{R}^3$. 
It is easy to see that 
$$
    J(0,\beta)=(r_1(\gamma^*(\beta)),\;R_0(\beta)),
$$ 
and 
$$
    J(0,0)=(
    r_1(x_0),\;
    r_2^*(x_0),\;
    r_3^*(x_0)),
$$
which is invertible and can be viewed a basis of $\mathbb{R}^3$ at $x_0$. Therefore, $J$ is invertible over some neighborhood of $\theta=0$. Hence, the map 
$\theta\mapsto x(\theta) 
$ 
is an analytic local diffeomorphism. 

To simplify notations, for any function $h=h(x)$, we denote it by  
$h(\theta):=h(x(\theta)).
$
Moreover, over the surface $\Gamma^*$  we shall denote 
$ h(\beta):=h(0,\beta).$
In particular,
$$    \lambda_i(\beta):=\lambda_i(0,\beta),\;
r_i(\beta):=r_i(0,\beta),\;
    \tau(\beta):=\tau(0,\beta),\;
    \bar\rho(\beta):=\bar\rho(0,\beta),\;
   \mathbf w(\beta):=\mathbf w(0,\beta).
$$
Finally, as in the 2D case, we define the set 
$$
    \mathcal R
    :=
    \left\{
    (t,\theta):
    t>\tau(\theta)
    \right\},
 $$ and its boundary 
$$    \Sigma:=
    \left\{
    (t,\theta):
    t=\tau(\theta)
    \right\}. 
$$

\begin{remark}Note that  the set of vectors 
$\{r_1(\theta),\;\tilde{r}_2(\theta),\;\tilde{r}_3(\theta)\}$ is still a basis of $\mathbb{R}^3$, but $\tilde{r}_2(\theta),\;\tilde{r}_3(\theta)$ are not orthogonal to each other, and $r_1$ is not necessarily perpendicular to the tangent plane at $x_0$.\end{remark}

\begin{figure}[ht]
	\centerline{\hbox{\includegraphics[width=13cm] {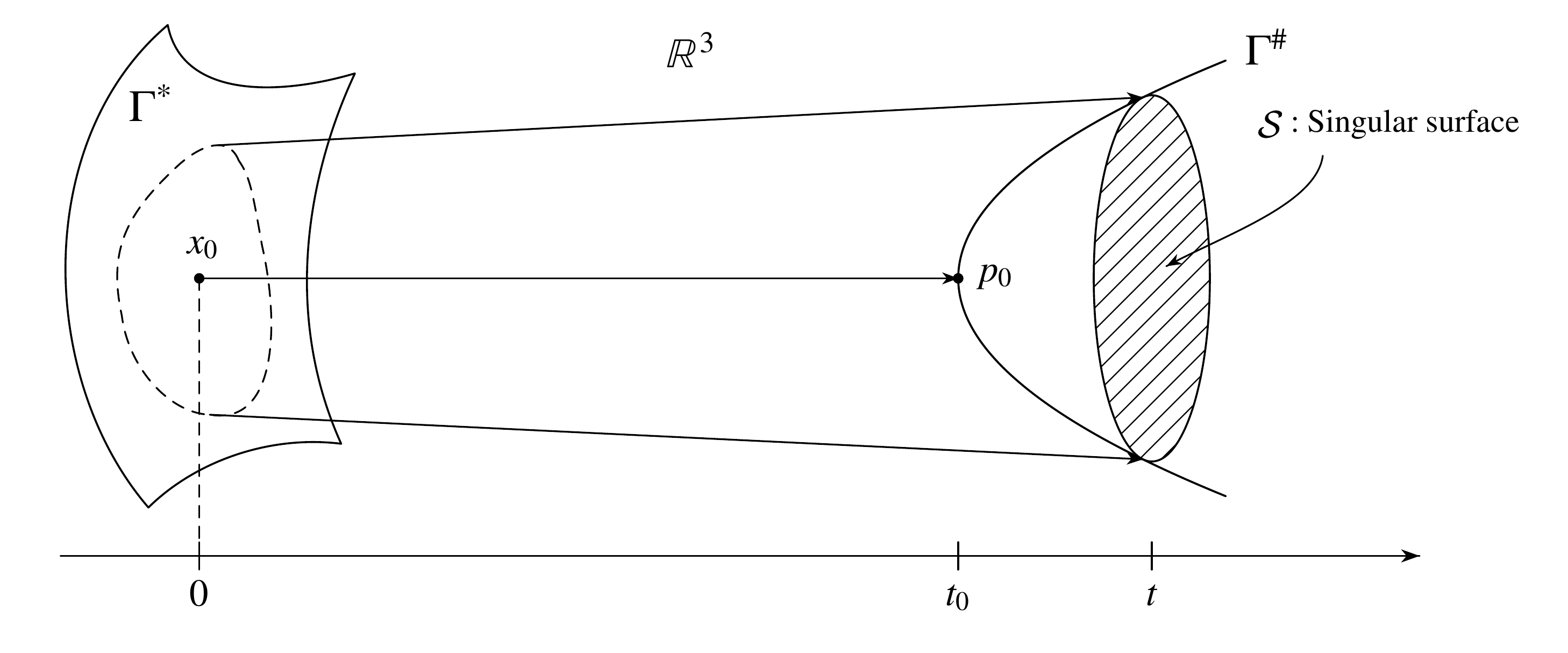}}}
	\caption{\small  Here $\Gamma^*$ is the surface in  $\mathbb{R}^3$ space, constructed by (\ref{g*}).  The image of $\Gamma^*$
		under the map $x \mapsto \bigl(\tau(x), x+ \tau(x) \bfw(x)\bigr)$ is a surface $\Gamma^\sharp$
		in four-dimensional $(t,x)$-space.
		At each time $t>t_0 = \tau (x_0)$,  the singular surface $\mathcal{S}$ (where a positive amount of mass is concentrated) is parameterized by $\zeta=(\zeta_1,\zeta_2)\mapsto y(t,\zeta)$, where $\tau(\zeta)\leq t$.
		Here $y(t,\zeta)$ is the position at time $t$ of the particle which was located at
		$\Gamma^*(\zeta)$
		at the initial time $t=0$.
		The boundary of $\mathcal{S}$ lies on the surface  $\Gamma^\sharp$.}
	\label{fig}
\end{figure}

\subsection{Constructing $X_1,X_2$} 

As in the 1-D and 2-D cases, for every $(t,\sigma,\beta)\in \mathcal R$ sufficiently close to $(t_0,0,0)$, there exist two additional pre-images, $X_1, X_2$ satisfying Eq. (\ref{Eqn:FlowMapBifurcation}),  where
$$
    X_i=x(\theta^i), \quad  \theta^i=(\sigma^i,\beta_1^i,\beta_2^i)^\top, \quad i=1,2,
$$
and $(t,\sigma^i,\beta^i)\notin \mathcal{R}. 
$

Therefore, given $(t,\sigma,\beta) \in \mathcal R$, we first seek to determine $X_1,X_2$, as this will allow us to determine the location of particles along the singular surface $\mathcal{S}$.  To this end, we rewrite Eq. (\ref{Eqn:FlowMapBifurcation}) in the following equivalent vector equation
\begin{equation}
    \Phi(t,\theta,\theta^i):=Q^{\top}(\theta) 
    \left(\xi(t,x(\theta^i))-\xi(t,x(\theta))\right)
    =0,
    \label{Eqn:BigPhiTotal}
\end{equation}
where $Q(\theta):=(r_1(\theta),h_2(\theta),h_3(\theta))$, with $h_2(\theta),h_3(\theta)$ defined by
$$h_2 := \frac{r_1\times r_2^*}{\left\| r_1\times r_2^* \right\|}, \quad
  h_3 := \frac{r_1\times r_3^*}{\left\| r_1\times r_3^* \right\|}.
$$
\begin{remark} The matrix $Q$ is invertible because $r_1$, $h_2$, and $h_3$ are linearly independent. In addition, $r_1$ is orthogonal to $h_2$, $h_3$, while $h_2$ and $h_3$ need not be orthogonal.\end{remark} 

Prior to singularity formation, this mapping admits a unique solution, given by $\theta^i=\theta$. Indeed, we compute
$$
    \left.D_{\theta^i}\Phi\right|_{\theta^i=\theta} 
    =Q^\top(\theta) \left(I+tD_x\mathbf{w}(x(\theta)) \right)J(\theta).  
$$
Since $Q$ and $J$ are invertible, we observe that 
$$
    \det\left(\left.D_{\theta^i}\Phi\right|
    _{\theta^i=\theta}\right)\neq0
$$
for all $t<\tau(\theta)$. At $t=\tau(\theta)$, this determinant vanishes, and the mapping loses invertibility. We note the vector equations (\ref{Eqn:BigPhiTotal}) are equivalent to the following set of two scalar equations. \begin{align}
    &\Psi_1(t,\theta,\theta^i):=r_1(\theta)^\top \left( x(\theta^i) 
    +t\mathbf{w}(x(\theta^i)) 
    -x(\theta)-t\mathbf{w}
    (x(\theta)) \right) = 0 ,
    \label{Eqn:Psi_1}
    \\
    &\Psi_2(t,\theta,\theta^i)
    :=H^\top(\theta)\left(x(\theta^i) 
    +t\mathbf{w}(x(\theta^i)) 
    -x(\theta)-t\mathbf{w}
    (x(\theta)) \right) = 0,
    \label{Eqn:Psi_2}
\end{align}
where $H:=[h_2,h_3]$. We first consider Eq. (\ref{Eqn:Psi_2}) and compute its Jacobian matrix w.r.t. $\beta$,
$$
    P(\beta) :=\left.
    \frac{\partial \Psi_2}{\partial\beta^i}
    \right|_{t=\tau(\beta),\sigma=0,\sigma^i=0,
    \beta^i=\beta} 
    =H^\top(\beta)(I+\tau(\beta)D_x\mathbf{w}(\beta)) 
    R_0(\beta).
$$ 

Note that the matrix $M(\beta):=I+\tau(\beta) D_x\mathbf{w}(\beta)$ admits the eigenvectors and eigenvalues given by   
\begin{align*}
    v_M^1(\beta) = r_1(\beta) ,
    \qquad &\lambda_M^1(\beta) = 0 ,
    \\
    v_M^2(\beta) = r_2(\beta) ,
    \qquad &\lambda_M^2(\beta) = 1 - 
    \frac{\lambda_2(\beta)}{\lambda_1(\beta)}, 
    \\
    v_M^3(\beta) = r_3(\beta), 
    \qquad &\lambda_M^3(\beta) = 1 - 
    \frac{\lambda_3(\beta)}{\lambda_1(\beta)}.
\end{align*} 
Thus, we have $\text{ker}(M(\beta)) =\operatorname{span}\{ r_1(\beta) \}$. Now,  by contradiction we show that the determinant of $P$ does not vanish. Indeed, if this is not true, there exists some nonzero vector $v \in \mathbb{R}^2$, such that 
$$
    H^\top(\beta) M(\beta) R_0(\beta) v = 0.
$$
Introduce $u(\beta):=R_0(\beta)v$ and we have 
$$
H^\top(\beta) M(\beta) u(\beta) = 0.
$$
Note that $u \neq 0$ since the columns of $R_0(\beta)$ are linearly independent.
By construction, $\ker(H^\top(\beta))= \operatorname{span}\{r_1(\beta)\}$, and 
it follows that 
\begin{equation}
    M(\beta) u(\beta) \in \operatorname{span}\{r_1(\beta)\}. 
    \label{Eqn:MuBetaR1} 
\end{equation} 

On the other hand, we may write $u(\beta)$ in terms of the eigenbasis
$$
    u(\beta) = c_1(\beta) r_1(\beta) + c_2(\beta) r_2(\beta) 
    + c_3(\beta) r_3(\beta). 
$$
Since 
$$
    u(\beta)\in\operatorname{span}\{r_2^*(\beta),
    r_3^*(\beta)\}, 
$$
and 
$r_1(\beta),r_2^*(\beta),r_3^*(\beta) $ 
are linearly independent, we find that $c_2(\beta),c_3(\beta)$ do not vanish simultaneously. A simple computation  gives  
$$
    M(\beta)u(\beta) = c_2(\beta)\left(1 - \frac{\lambda_2(\beta)}
    {\lambda_1(\beta)} \right)r_2(\beta) 
    + c_3(\beta)\left(1 - \frac{\lambda_3(\beta)}{\lambda_1(\beta)} \right)r_3(\beta),
$$
which  implies that 
$
    M(\beta)u(\beta) \notin \operatorname{span}
    \{r_1(\beta) \}.
$This is a
contradiction to (\ref{Eqn:MuBetaR1}). 
Therefore, by the implicit function theorem, we can solve out $\beta^i$ by Eq. (\ref{Eqn:Psi_2}), say
$\beta^i=\phi(t,\theta,\sigma^i), 
$ for some analytic function 
satisfying    
$\phi(t,\theta,\sigma) = \beta.$

Next we shall consider the leading-order expansion of $\phi(t,\theta,\sigma)$. Let
$$
    \tilde\Phi(t,\theta,\theta^i):= 
    \xi(t,x(\theta^i))-\xi(t,x(\theta))
$$
and we compute its derivative with respect to $\sigma^i$ for $t\approx\tau(\beta)$,
\begin{align}
    \left.\frac{\partial\tilde\Phi}
    {\partial\sigma^i}\right|_{\beta^i=\beta} 
    &=(I+tD_x\mathbf{w}(x(\theta^i)))r_1(\theta^i) 
    \nonumber\\
    &=(1+t\lambda_1(\theta^i))r_1(\theta^i) 
    \nonumber\\
    &=\left((t-\tau(\beta))\lambda_1(\theta^i) 
    +\tau(\beta)(\lambda_1(\theta^i)
    -\lambda_1(\beta))\right)r_1(\theta^i). 
    \label{Eqn:PartialPhiTildePartialSigma}
\end{align} 

Since the map 
$\sigma\mapsto\tau(\sigma,\beta)=-1/\lambda_1(\sigma,\beta)$
has a positive local minimum at $\sigma=0$,
a Taylor expansion yields
\begin{equation}
    \lambda_1(\theta) 
    = -\omega_1(\beta) + 
    \sum_{k=3}^\infty \omega_k(\beta) \frac{\sigma^{k-1}}{(k-1)!}, \label{Eqn:Lambda_1_Taylor_Exp}
\end{equation} 
where $\omega_1(\beta):=-\lambda_1(0,\beta)>0,\; \omega_3(\beta)>0,\; \omega_k(\beta):=\partial^{k-1}_\sigma 
\lambda_1(0,\beta)$. Then, this allows us to compute the expansion of $\tau(x)$ in the following form 
\begin{equation}
    \tau(\theta)=-\frac{1}{\lambda_1(\theta)} 
    =\frac{1}{\omega_1(\beta)} 
    \left(1+\frac{\omega_3(\beta)}
    {2\omega_1(\beta)}\sigma^2+\O(\sigma^3)
    \right). \label{Eqn:Tau_Taylor_Exp}
\end{equation} 

We now expand the following vector product quantities. Note that $r_1(\theta)$ is a unit vector and is therefore orthogonal to its first derivative.  Hence, we have 
\begin{align}
    \langle r_1(\beta),r_1(\theta)
    -r_1(\beta)\rangle&=q_1(\beta)\sigma^2 
    +q_2(\beta)\sigma^3+\O(\sigma^4) ,
    \label{Eqn:r1T_r1_Taylor_Exp}\\
     H^\top(\beta)\left(r_1(\theta)-r_1(\beta)
    \right)&=q_3(\beta)\sigma+q_4(\beta)\sigma^2 
    +\O(\sigma^3), \label{Eqn:HT_r1_Taylor_Exp}
\end{align}
where 
$$
    q_1(\beta) := \frac{1}{2}r_1(\beta)^\top  
    \frac{\partial^2 r_1}{\partial\sigma^2}(\beta) \in\mathbb{R},
    \quad
    q_3(\beta) := H^\top(\beta)\frac{\partial r_1}
    {\partial\sigma}(\beta)
    \in\mathbb{R}^{2}, 
$$
and $q_2,q_4$ are analytic coefficients whose forms need not be computed explicitly. Combining Eqs.
(\ref{Eqn:Lambda_1_Taylor_Exp}), 
(\ref{Eqn:Tau_Taylor_Exp}), 
(\ref{Eqn:HT_r1_Taylor_Exp}) and Eq.
(\ref{Eqn:PartialPhiTildePartialSigma}) 
gives the following expansion,
\begin{align*}
    &\Psi_2(t,\sigma,\beta,Z,\beta)=H^\top(\beta)
    \left(x(Z,\beta)+t\bfw(x(Z,\beta))-x(\sigma,\beta)
    -t\mathbf{w}(x(\sigma,\beta))\right) 
    \nonumber\\
    &=\int_\sigma^Z \left((t-\tau(\beta))
    \lambda_1(\sigma^i,\beta)+\tau(\beta)
    (\lambda_1(\sigma^i,\beta)-\lambda_1(\beta))
    \right)H^\top(\beta)r_1(\sigma^i,\beta)d\sigma^i 
    \nonumber\\
    &=(t-\tau(\beta))\lambda_1(\beta) 
    \left(\frac{1}{2}q_3(\beta)(Z^2-\sigma^2) 
    +\frac{1}{3}q_4(\beta)(Z^3-\sigma^3)\right) 
    +\frac{1}{8}\omega_3(\beta)\tau(\beta)q_3(\beta) 
    (Z^4-\sigma^4) \nonumber\\
    &+\O\left((t-\tau(\beta))\left|Z^4-\sigma^4\right| 
    +\left|Z^5-\sigma^5 \right|\right).
\end{align*}

Introduce the following notations 
$$
q_5(\beta):= -\frac{1}{2}\lambda_1(\beta)
    P^{-1}(\beta)q_3(\beta),
    \quad
q_6(\beta):= -\frac{1}{3}\lambda_1(\beta)
    P^{-1}(\beta)q_4(\beta),
    \quad q_7(\beta):= -\frac{1}{8}\omega_3(\beta) 
    \tau(\beta) P^{-1}(\beta) q_3(\beta),
$$
by the implicit function theorem, we obtain 
$$
    \phi(t,\theta,Z)-\beta=-P^{-1}\Psi_2(t, 
    \theta,(Z,\beta))
    +\O(
    ((t-\tau(\beta))+|\sigma|+|Z|)
    |\Psi_2(t, 
    \theta,(Z,\beta))|
    +|\Psi_2(t,\theta,(Z,\beta))|^2). 
$$
Therefore, 
\begin{align}
    \phi(t,\theta,Z) - \beta 
    = &(t-\tau(\beta))\left(q_5(\beta)(Z^2-\sigma^2) 
    + q_6(\beta) (Z^3-\sigma^3) \right) 
    + q_7(\beta)(Z^4-\sigma^4) 
    \nonumber\\
    &+\O\left((t-\tau(\beta))\left|Z^4-\sigma^4\right| 
    +\left|Z^5-\sigma^5 \right|\right) := \phi_0(t,\sigma,\beta,Z),  
    \label{Eqn:phiExp}
\end{align} 
which gives a formula for $\beta^i$ in the following form 
$$
    \beta^i = \beta + \phi_0(t,\sigma,\beta,Z) .
$$

Now we move to consider Eq. (\ref{Eqn:Psi_1}). Calling 
$$
    \tilde\Psi := \Psi_1(t,\theta,Z, 
    \phi(t,\theta,Z))
$$
and we split this into two terms 
$\tilde\Psi= A+B$, where 
\begin{align}
    &A:= \left\langle 
    r_1(\beta),x(Z,\beta)+t\mathbf{w}(x(Z,\beta)) 
    -x(\sigma,\beta)-t\mathbf{w}(x(\sigma,\beta))
    \right\rangle ,\label{Eqn:A_section4}
    \\
    &B:= \left\langle 
    r_1(\beta),x(Z,\phi(t,\theta,Z)) + 
    t\mathbf{w}(x(Z,\phi(t,\theta,Z))) 
    -x(Z,\beta)-t\mathbf{w}(x(Z,\beta))
    \right\rangle,\label{Eqn:B_section4}
\end{align} 
in which $A$ is the difference w.r.t.  $\sigma$, while  $B$ denotes the remaining difference w.r.t. $\beta$.

Note that  $A$ can be rewritten as  
$$A=\left\langle r_1(\beta), \int_\sigma^Z (1+t 
\lambda_1(s,\beta))r_1(s,\beta)ds \right\rangle.$$

By using (\ref{Eqn:r1T_r1_Taylor_Exp}), we can deduce that 
\begin{align}
    A = &-\omega_1(\beta)(t-\tau(\beta))(Z-\sigma) 
    + \frac{t\omega_3(\beta)}{6}(Z^3-\sigma^3) 
    + \frac{t\omega_4(\beta)}{24}(Z^4-\sigma^4) 
    \nonumber\\
    &-\omega_1(\beta)(t-\tau(\beta))\left( 
    \frac{q_1(\beta)}{3}(Z^3-\sigma^3)
    +\frac{q_2(\beta)}{4}(Z^4-\sigma^4) 
    \right) 
    \nonumber\\
    &+\O((t-\tau(\beta))\left|Z^4-\sigma^4 
    \right|+\left|Z^5-\sigma^5\right|).
    \label{Eqn:A_Expanded}
\end{align} 
At the same time, we can obtain the expansion for $B$ in the form 
\begin{align}
    B = r_1(\beta)^\top\left(I+\tau(\beta)
    D_x\mathbf{w}(\beta)\right) 
    R_0(\beta)\phi_0(t,\sigma,\beta,Z) 
    + \O\biggl(
    |\phi_0|\bigl(|\phi_0|+ 
    |t-\tau(\beta)|  
    + |Z|\bigr) 
    \biggr). 
    \label{Eqn:B_Expanded}
\end{align} 

We then seek the two nontrivial solutions to $\tilde\Psi = 0$. To remove the trivial solution $Z=\sigma,$  we divide it by $Z-\sigma$ and substitute Eqs. (\ref{Eqn:A_Expanded})--(\ref{Eqn:B_Expanded}) into the resulting equation. Then, we have
\begin{align*}
    0 &= \frac{\tilde\Psi}{Z-\sigma} 
    \nonumber\\
    &= -\omega_1(\beta)(t-\tau(\beta)) 
    + \frac{1}{6}t\omega_3(\beta)(Z^2+Z\sigma+\sigma^2) 
    + \frac{1}{24}t\omega_4(\beta)(Z+\sigma)(Z^2+\sigma^2) 
    \nonumber 
    \\
    &-\omega_1(\beta)(t-\tau(\beta))\left( 
    \frac{1}{3}q_1(\beta)(Z^2+Z\sigma+\sigma^2) 
    +\frac{1}{4}q_2(\beta)(Z+\sigma)(Z^2+\sigma^2)
    \right) \nonumber 
    \\
    &+q_8(\beta)\left( 
    (t-\tau(\beta))\left( 
    q_5(\beta)(Z+\sigma) + q_6(\beta) 
    (Z^2+Z\sigma+\sigma^2)
    \right) 
    +q_7(\beta)(Z+\sigma)(Z^2+\sigma^2)
    \right) \nonumber 
    \\
    &+\O\left((t-\tau(\beta))(|Z|^3+|\sigma|^3) 
    +(|Z^4|+|\sigma|^4)\right), 
\end{align*} 
where 
$$
    q_8(\beta):= r_1(\beta)^\top\left(I+\tau(\beta)D_x\mathbf{w}
    (\beta)\right)R_0(\beta) \in \mathbb{R}^{1\times2}.
$$

As we seek solutions after the singularity formation, we define 
\begin{equation}
   \tilde\sigma := \frac{\sigma}{\delta_\beta}
    \label{Eqn:sigmaHat} 
\end{equation} 
and $$\delta_\beta:=t-\tau(\beta),\quad
U:=\frac{Z}{\sqrt{\delta_\beta}}. $$
Then, we divide the previous equation by $\frac{t\omega_3(\beta)}{6}$ and obtain   
\begin{align}
    0&= (Z^2+Z\sigma+\sigma^2)- 
    \frac{6 \omega_1(\beta) \delta_\beta}{t \omega_3(\beta)}
    +\frac{\omega_4(\beta)}{4\omega_3(\beta)}(Z+\sigma)(Z^2+\sigma^2)\nonumber\\ 
    &-\frac{6\omega_1(\beta)\delta_\beta}
    {t\omega_3(\beta)}
    \biggl[\frac{q_1(\beta)}{3}(Z^2+Z\sigma+\sigma^2)+
    \frac{q_2(\beta)}{4}(Z+\sigma)(Z^2+\sigma^2)\biggr]\nonumber\\ 
    &+\frac{6q_8(\beta)}{t\omega_3(\beta)} \left\{ 
    \delta_\beta\left[ q_5(\beta)(Z+\sigma)+q_6(\beta)
    (Z^2+\sigma Z+\sigma^2)\right]
    +q_7(\beta)(Z+\sigma)\bigl(Z^2+\sigma^2\bigr)\right\}\nonumber\\
    &+\O\left(\delta_\beta(|Z|^3+|\sigma|^3)
    +(|Z^4|+|\sigma|^4)\right). \label{Eqn:Phi_Tilde_Div}
\end{align} 

Using the expansion 
$$
    \frac{1}{t}=\frac{1}{\tau(\beta)+\delta_\beta}
    =\frac{1}{\tau(\beta)}\sum_{k=0}^\infty 
    \left(-\frac{\delta_\beta}{\tau(\beta)}\right)^k 
    =\omega_1(\beta)\sum_{k=0}^\infty 
    (-\omega_1(\beta)\delta_\beta)^k,
$$
we can rewrite (\ref{Eqn:Phi_Tilde_Div}) in a more compact form 
$$
    0=U^2-c_1(\beta)+\sqrt{\delta_\beta} 
    \left(c_2(\tilde\sigma,\beta)U 
    +c_3(\beta)U^3\right) 
    +\delta_\beta\mathfrak{R}\left(\sqrt{\delta_\beta}, 
    \tilde\sigma, \beta, U\right), 
$$
where $\mathfrak{R}$ collects higher-order terms and is analytic in its arguments, and 
\begin{align}
    &c_{1}(\beta) := {6 \omega_1^2(\beta) \over \omega_3(\beta)} , 
    \label{Eqn:c_1}
    \\
    &c_2(\tilde\sigma,\beta) := \tilde\sigma + \frac{6  q_8(\beta)q_5(\beta)\omega_1(\beta)}{\omega_3(\beta)},
    \label{Eqn:c_2}
    \\
    &c_3(\beta):= \frac{\omega_4(\beta)}{4\omega_3(\beta)}+\frac{6q_8(\beta)q_7(\beta)\omega_1(\beta)}{\omega_3(\beta)}.
    \label{Eqn:c_3}
\end{align} 

We then construct the two nontrivial solutions for $Z$. 
\begin{theorem}
\label{Thm:ZpmForm}
There exist a neighborhood
$
\mathcal N
\subset
\mathbb R\times\mathbb R^2
$
containing $(0,0)$ and a constant $\delta_0>0$ such that, for every
$
(\tilde\sigma,\beta)\in\mathcal N,
0\leq\delta_\beta<\delta_0,
$
Eq. (\ref{Eqn:Phi_Tilde_Div}) admits two local solutions, 
$
    Z^\pm=Z^\pm(t,\tilde\sigma,\beta) 
$
in the form
\begin{equation}
    Z^\pm(t,\tilde\sigma,\beta)
    =
    \delta_\beta\alpha_0(\delta_\beta,\tilde\sigma,\beta)
    \pm\sqrt{\delta_\beta}
    \alpha_1(\delta_\beta,\tilde\sigma,\beta), 
    \label{Eqn:ZpmFinal}
\end{equation} 
where $\alpha_0$ and $\alpha_1$ are analytic in
$(\delta_\beta,\tilde\sigma,\beta)$.
Moreover, 
$$\alpha_0(\delta_\beta,\tilde\sigma,\beta)
    =-\frac{1}{2}\left[c_2(\tilde\sigma,\beta)+c_1(\beta)c_3(\beta)\right]+\O(\delta_\beta),$$
    $$
    \alpha_1(\delta_\beta,\tilde\sigma,\beta)
    =\sqrt{c_1(\beta)}+\O(\delta_\beta).
$$
\end{theorem}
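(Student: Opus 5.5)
The plan is to treat the rescaled equation
$$0=U^2-c_1(\beta)+\sqrt{\delta_\beta}\bigl(c_2(\tilde\sigma,\beta)U+c_3(\beta)U^3\bigr)+\delta_\beta\mathfrak R(\sqrt{\delta_\beta},\tilde\sigma,\beta,U)$$
as an analytic perturbation problem in the small parameter $\varepsilon:=\sqrt{\delta_\beta}$. The first step is to check that, once $\sigma=\delta_\beta\tilde\sigma=\varepsilon^2\tilde\sigma$ and $Z=\varepsilon U$ are substituted into (\ref{Eqn:Phi_Tilde_Div}), every term becomes analytic in $(\varepsilon,\tilde\sigma,\beta,U)$. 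This holds because the quotient $\tilde\Psi/(Z-\sigma)$ is an analytic function of $(t,\sigma,\beta,Z)$, and the $\O(\cdot)$ remainder is a convergent power series whose terms carry at least the displayed powers of $\delta_\beta$, $Z$, $\sigma$. Hence the function
$$G(\varepsilon,\tilde\sigma,\beta,U):=U^2-c_1(\beta)+\varepsilon\bigl(c_2U+c_3U^3\bigr)+\varepsilon^2\mathfrak R$$
is analytic near $(0,\tilde\sigma,\beta,\pm\sqrt{c_1(\beta)})$. Moreover $c_1(\beta)=6\omega_1^2/\omega_3>0$ by (\textbf{A2}), since $\omega_1>0$ and $\omega_3>0$.

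The second step applies the analytic implicit function theorem at $\varepsilon=0$. There the roots are $U=\pm\sqrt{c_1(\beta)}$, and $\partial_UG=\pm2\sqrt{c_1(\beta)}\neq0$. This yields two analytic branches $U^\pm(\varepsilon,\tilde\sigma,\beta)$ on a neighborhood $\mathcal N\times(-\varepsilon_0,\varepsilon_0)$, after shrinking $\mathcal N$ so that $c_1$ stays bounded away from zero.

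Next compute the first-order correction by differentiating $G(\varepsilon,\cdot,U^\pm)=0$ in $\varepsilon$ at $\varepsilon=0$. This gives $2U\,\partial_\varepsilon U+c_2U+c_3U^3=0$, so $\partial_\varepsilon U^\pm|_{\varepsilon=0}=-\frac12(c_2+c_1c_3)$, the same value for both branches. Thus
$$U^\pm=\pm\sqrt{c_1}-\tfrac12\varepsilon(c_2+c_1c_3)+\O(\varepsilon^2).$$

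The key structural step is to show that the even and odd parts in $\varepsilon$ organize as claimed. The function $G$ depends on $\varepsilon$ only through the combination $\sqrt{\delta_\beta}$ arising from $Z=\varepsilon U$. Since $G$ has the symmetry $G(-\varepsilon,\tilde\sigma,\beta,-U)=\widetilde G(\varepsilon,\tilde\sigma,\beta,U)$, which (given that $\tilde\Psi/(Z-\sigma)$ is a function of $(\delta_\beta,\sigma,Z)$) should equal $G$ up to the factor $\varepsilon^{-2}\cdot\varepsilon^2$, the map $(\varepsilon,U)\mapsto(-\varepsilon,-U)$ sends solutions to solutions. By uniqueness in the implicit function theorem this gives $U^-(\varepsilon)=-U^+(-\varepsilon)$.

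From this relation, $Z^+=\varepsilon U^+(\varepsilon)$ and $Z^-=\varepsilon U^-(\varepsilon)=(-\varepsilon)U^+(-\varepsilon)$. So $Z^\pm=f(\pm\varepsilon)$ with $f(\varepsilon):=\varepsilon U^+(\varepsilon)$ analytic. Write $f$ as the sum of its even and odd parts in $\varepsilon$. The even part is an analytic function of $\varepsilon^2=\delta_\beta$ vanishing at $0$, so it equals $\delta_\beta\alpha_0(\delta_\beta,\tilde\sigma,\beta)$. The odd part equals $\varepsilon\,\alpha_1(\delta_\beta,\tilde\sigma,\beta)$. This is exactly (\ref{Eqn:ZpmFinal}). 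The leading coefficients $\alpha_1=\sqrt{c_1}+\O(\delta_\beta)$ and $\alpha_0=-\frac12(c_2+c_1c_3)+\O(\delta_\beta)$ are read off from the expansion of $U^+$ above.

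The main obstacle is verifying carefully that the remainder terms really are analytic in $(\varepsilon,\tilde\sigma,\beta,U)$, and that the $\pm$ symmetry holds. The concern is that $\sigma=\varepsilon^2\tilde\sigma$ scales differently from $Z=\varepsilon U$, and one must check that no odd half-powers spoil the evenness argument. The check I would carry out first is that the original equation is $\varepsilon$-independent as a function of $(\delta_\beta,\sigma,Z)$, so that the only odd dependence enters through $Z=\varepsilon U$. Should the symmetry argument fail, the fallback is to define $\alpha_0,\alpha_1$ directly as $\frac{Z^++Z^-}{2\delta_\beta}$ and $\frac{Z^+-Z^-}{2\sqrt{\delta_\beta}}$. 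One then shows these are analytic in $\delta_\beta$ because they are symmetric functions of the two roots of a polynomial-like equation in $U$, via Weierstrass preparation.
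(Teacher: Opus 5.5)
Your proposal is correct and follows essentially the same route as the paper. Both set $\varepsilon=\sqrt{\delta_\beta}$, apply the analytic implicit function theorem at $U=\pm\sqrt{c_1(\beta)}$, and use the symmetry $\mathfrak F(\varepsilon,\tilde\sigma,\beta,U)=\mathfrak F(-\varepsilon,\tilde\sigma,\beta,-U)$ to get $U^-(\varepsilon)=-U^+(-\varepsilon)$; that symmetry holds exactly, since the rescaled equation is $\varepsilon^{-2}$ times a function of $(\varepsilon^2,\varepsilon^2\tilde\sigma,\varepsilon U,\beta)$. Both then split into even and odd parts in $\varepsilon$ and read off $\alpha_0,\alpha_1$ by differentiating at $\varepsilon=0$; the only difference is that you decompose $\varepsilon U^+(\varepsilon)$ instead of $U^+(\varepsilon)$, and the Weierstrass-preparation fallback is not needed.
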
  
\begin{proof}
    Define 
$
        \varepsilon_\beta:=\sqrt{\delta_\beta}. 
$
    We then seek solutions to 
$
        \mathfrak{F}(\varepsilon_\beta,\tilde\sigma, 
        \beta,U)=0, 
$
    where 
$$
        \mathfrak{F}(\varepsilon_\beta,\tilde\sigma, 
        \beta,U):=U^2-c_1(\beta)+\varepsilon_\beta 
        \left(c_2(\tilde\sigma,\beta)U
        +c_3(\beta)U^3\right) 
        +\varepsilon_\beta^2\mathfrak{R} 
        (\varepsilon_\beta,\tilde\sigma,\beta,U). 
$$
    When $\varepsilon_\beta=0$, it follows that 
$
        U^2-c_1(\beta)=0.   
 $ 
By (\ref{Eqn:c_1}), we know  $c_1(\beta)>0$. Therefore, we have  two distinct real roots $U=\pm\sqrt{c_1(\beta)}. $

   Now we  seek the solution  for $\varepsilon_\beta>0$. To this end, we differentiate 
$$
        \frac{\partial}{\partial U} 
        \mathfrak{F}\left(0,\tilde\sigma, 
        \beta,\pm\sqrt{c_1(\beta)}\right) 
        =\pm2\sqrt{c_1(\beta)} \neq 0. 
$$
    By the implicit function theorem, this implies the local existence of two analytic solution branches, 
$$
   U^+(\varepsilon_\beta,\tilde\sigma,\beta, \quad 
       U^-(\varepsilon_\beta,\tilde\sigma,\beta), 
$$
    which satisfy 
$$
        U^\pm(0,\tilde\sigma,\beta)=\pm 
        \sqrt{c_1(\beta)}. 
$$
In order to derive the formulas, we expand $U^+$ in $\varepsilon_\beta$ as follows
$$
        U^+(\varepsilon_\beta,\tilde\sigma, 
        \beta)=\sum_{k=0}^\infty a_k(\tilde\sigma,\beta) 
        \varepsilon^k_\beta. 
$$ 
    Collecting even and odd terms yields 
$$
        U^+(\varepsilon_\beta,\tilde\sigma,\beta) 
        =\varepsilon_\beta\alpha_0(
        \varepsilon_\beta^2, 
        \tilde\sigma,\beta)+\alpha_1( 
        \varepsilon_\beta^2, 
        \tilde\sigma,\beta),
$$where  $\alpha_0,\alpha_1$ are given as in (\ref{Eqn:ZpmFinal}).

Note the symmetry 
$$
        \mathfrak{F}(\varepsilon_\beta,\tilde\sigma, 
        \beta,U)=\mathfrak{F}(-\varepsilon_\beta,\tilde\sigma, 
        \beta,-U), 
$$
    which implies 
$$
        U^-(\varepsilon_\beta,\tilde\sigma, 
        \beta)=-U^+(-\varepsilon_\beta,
        \tilde\sigma,\beta). 
$$
    Therefore, we have
$$
        U^-(\varepsilon_\beta,\tilde\sigma,\beta) 
        =\varepsilon_\beta\alpha_0(
        \varepsilon_\beta^2, 
        \tilde\sigma,\beta)-\alpha_1( 
        \varepsilon_\beta^2, 
        \tilde\sigma,\beta). 
$$
 This gives the desired form of $Z^\pm$, 
$$
        Z^\pm(t,\tilde\sigma,\beta) 
        =\delta_\beta\alpha_0(\delta_\beta,
        \tilde\sigma,\beta) 
        \pm \sqrt{\delta_\beta}\alpha_1(\delta_\beta, 
        \tilde\sigma,\beta),
$$as stated in (\ref{Eqn:ZpmFinal}).

It remains to show the asymptotic expansion of $\alpha_0, \alpha_1$. We differentiate the equation $\mathfrak{F}(\varepsilon_\beta,\tilde\sigma, 
\beta,U^+(\varepsilon_\beta,\tilde\sigma, 
\beta))=0  $ and obtain 
$$\frac{\partial}{\partial\varepsilon_\beta} 
        \mathfrak{F}(\varepsilon_\beta,\tilde\sigma, 
        \beta,U^+(\varepsilon_\beta,\tilde\sigma, 
        \beta))+\frac{\partial}{\partial U} 
        \mathfrak{F}(\varepsilon_\beta,\tilde\sigma, 
        \beta,U^+(\varepsilon_\beta,\tilde\sigma, 
        \beta))\frac{\partial U^+
        (\varepsilon_\beta,\tilde\sigma,\beta)}
        {\partial\varepsilon_\beta}=0. 
$$
    At $\varepsilon_\beta=0$, it follows that
$$
        c_2(\tilde\sigma,\beta)\sqrt{c_1(\beta)} 
        +c_3(\beta)c_1(\beta)^{3/2} 
        +2\sqrt{c_1(\beta)}\frac{\partial U^+}
        {\partial\varepsilon_\beta}
        (0,\tilde\sigma,\beta)=0,
$$
and
$$
        \frac{\partial U^+}{\partial\varepsilon_\beta}
        (0,\tilde\sigma,\beta)= 
        -\frac{1}{2}\left(c_2(\tilde\sigma,\beta) 
        +c_3(\beta)c_1(\beta)\right). 
$$
Thus, this gives us the desired asymptotic expansions, 
$$
\alpha_0(\delta_\beta,\tilde\sigma,\beta)=
        -\frac{1}{2}\left(c_2(\tilde\sigma,\beta) 
        +c_3(\beta)c_1(\beta)\right) 
        +\O(\delta_\beta), 
$$
$$
      \alpha_1(\delta_\beta,\tilde\sigma,\beta)=
        \sqrt{c_1(\beta)}+\O(\delta_\beta). 
$$The proof is completed. 
\end{proof} 

Furthermore, by using the above results we can  determine the velocity $\mathbf{v}^\pm$ and the density $\rho^\pm$, in a
neighborhood of a point where the singularity is formed. 

\begin{theorem}\label{Theorem:vpm}
Fix $\beta_0\in\mathbb R^2$ and $\tilde\sigma_0\in\mathbb R$. There exists a neighborhood $\mathcal N\subset\mathbb R\times\mathbb R\times\mathbb R^2$ of $(\tau(\beta_0),\tilde\sigma_0,\beta_0)$ such that, for every $(t,\tilde\sigma,\beta)\in\mathcal N$ satisfying
$
    t>\tau(\beta), 
$
the velocity and density associated with $X^\pm$ are given by 
\begin{align}
\mathbf v^\pm(t,\sigma,\beta)-\mathbf w(\beta)
&=(t-\tau(\beta))A_1(t,\tilde\sigma,\beta)\pm\sqrt{t-\tau(\beta)}A_2(t,\tilde\sigma,\beta),
\label{Eqn:vpmForm}\\
\rho^\pm(t,\sigma,\beta)
&=\frac{a_1(t,\tilde\sigma,\beta)\pm\sqrt{t-\tau(\beta)}a_2(t,\tilde\sigma,\beta)}{t-\tau(\beta)}.
\label{Eqn:rhopmForm}
\end{align}
Furthermore, $a_1,a_2,A_1,A_2$ are all analytic functions satisfying 
\begin{align}
A_2(\tau(\beta),\tilde\sigma,\beta)
&=\sqrt{c_1(\beta)}\lambda_1(\beta)r_1(\beta)
=-\omega_1^2(\beta)\sqrt{\frac{6}{\omega_3(\beta)}}r_1(\beta),
\label{Eqn:A2Bdry}\\
a_1(\tau(\beta),\tilde\sigma,\beta)
&=\frac{\omega_1(\beta)\bar\rho(\beta)}
{2(\lambda_2(\beta)-\lambda_1(\beta))(\lambda_3(\beta)-\lambda_1(\beta))}>0. 
\label{Eqn:a1Bdry}
\end{align} 
\end{theorem}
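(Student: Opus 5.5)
The plan is to substitute the two branches of preimages from Theorem~\ref{Thm:ZpmForm} into the free-flow formulas (\ref{Eqn:velocityTransport})--(\ref{Eqn:densityTransport}). Let $X^\pm=x(\theta^\pm)$ with $\theta^\pm=(Z^\pm,\beta^\pm)$ and $\beta^\pm=\beta+\phi_0(t,\sigma,\beta,Z^\pm)$ as in (\ref{Eqn:phiExp}). Then
\[
\bfv^\pm=\bfw(X^\pm),\qquad \rho^\pm=\frac{\bar\rho(X^\pm)}{\det\bigl(I+tD\bfw(X^\pm)\bigr)} .
\]
Throughout, write $\varepsilon=\sqrt{\delta_\beta}$ and $\sigma=\delta_\beta\tilde\sigma=\varepsilon^2\tilde\sigma$. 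By Theorem~\ref{Thm:ZpmForm}, $Z^\pm=\varepsilon^2\alpha_0\pm\varepsilon\alpha_1$, and $Z^-(\varepsilon)=Z^+(-\varepsilon)$. From (\ref{Eqn:phiExp}), $\phi_0$ is $\O(\delta_\beta Z^2+Z^4)=\O(\varepsilon^4)$ and is analytic in $(\varepsilon,\tilde\sigma,\beta)$. Hence $\theta^\pm-(0,\beta)$ is an analytic function of $\pm\varepsilon$ whose leading term is $(\pm\varepsilon\sqrt{c_1},0,0)$. The key structural point is this symmetry: every quantity built from $X^+$ is an analytic function $G(\varepsilon,\tilde\sigma,\beta)$, and the corresponding $X^-$ quantity is $G(-\varepsilon,\tilde\sigma,\beta)$. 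Splitting $G$ into even and odd parts in $\varepsilon$ then produces the forms $E(\varepsilon^2)\pm\varepsilon O(\varepsilon^2)$ appearing in (\ref{Eqn:vpmForm}) and (\ref{Eqn:rhopmForm}).

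\emph{Velocity.} Taylor-expand $\bfw(x(Z,\beta'))$ about $(0,\beta)$:
\[
\bfw(X^\pm)-\bfw(\beta)=Z^\pm\,D\bfw\,r_1+\O\bigl(|Z^\pm|^2+|\beta^\pm-\beta|\bigr),\qquad D\bfw\,r_1=\lambda_1 r_1 .
\]
The $\O(\cdot)$ remainder is $\O(\varepsilon^2)$ and analytic. The expression is analytic in $\varepsilon$ and vanishes at $\varepsilon=0$, so its even part is $\varepsilon^2A_1$ and its odd part is $\varepsilon A_2$. The leading coefficient is
\[
A_2(\tau(\beta),\tilde\sigma,\beta)=\sqrt{c_1}\,\lambda_1 r_1 .
\]
Using $c_1=6\omega_1^2/\omega_3$ and $\lambda_1=-\omega_1$, this gives $-\omega_1^2\sqrt{6/\omega_3}\,r_1$, which is (\ref{Eqn:A2Bdry}).

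\emph{Density.} This is the main obstacle, because the denominator vanishes to first order in $\varepsilon^2$. I would write the determinant through the eigenvalues of $D\bfw(X^\pm)$:
\[
\det\bigl(I+tD\bfw(X^\pm)\bigr)=\bigl(1+t\lambda_1(X^\pm)\bigr)\bigl(1+t\lambda_2(X^\pm)\bigr)\bigl(1+t\lambda_3(X^\pm)\bigr).
\]
The last two factors are analytic and nonvanishing. At $\varepsilon=0$ they equal $(1-\lambda_2/\lambda_1)(1-\lambda_3/\lambda_1)=(\lambda_2-\lambda_1)(\lambda_3-\lambda_1)/\omega_1^2$. For the first factor, use the expansions (\ref{Eqn:Lambda_1_Taylor_Exp}) and (\ref{Eqn:Tau_Taylor_Exp}) as in (\ref{Eqn:PartialPhiTildePartialSigma}):
\[
1+t\lambda_1(X^\pm)=\delta_\beta\lambda_1(\beta)+\tau(\beta)\bigl(\lambda_1(X^\pm)-\lambda_1(\beta)\bigr)+\O(\delta_\beta|Z|).
\]
Here $\lambda_1(X^\pm)-\lambda_1(\beta)=\tfrac12\omega_3 (Z^\pm)^2+\O(|Z|^3+|\beta^\pm-\beta|)$; the $\beta$-shift contributes only $\O(\varepsilon^4)$, since $\nabla_\beta\lambda_1$ is bounded and $\phi_0=\O(\varepsilon^4)$. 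Substituting $(Z^\pm)^2=\varepsilon^2 c_1+\O(\varepsilon^3)$ gives
\[
1+t\lambda_1(X^\pm)=\varepsilon^2\Bigl(-\omega_1+\frac{\omega_3}{2\omega_1}c_1\Bigr)+\O(\varepsilon^3)=2\omega_1\varepsilon^2+\O(\varepsilon^3).
\]
This factor is positive, consistent with $X^\pm$ lying in the region $t<\tau$. It is $\varepsilon^2$ times an analytic function $g(\pm\varepsilon)$ with $g(0)=2\omega_1\neq0$, so the division by $\varepsilon^2$ is legitimate analytically. Therefore
\[
\rho^\pm=\frac{\bar\rho(X^\pm)}{\varepsilon^2\,g(\pm\varepsilon)\,(1+t\lambda_2)(1+t\lambda_3)} .
\]
The numerator divided by $g(\pm\varepsilon)(1+t\lambda_2)(1+t\lambda_3)$ is analytic in $\pm\varepsilon$. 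Its even/odd splitting gives $a_1\pm\varepsilon a_2$, and at $\varepsilon=0$
\[
a_1=\frac{\bar\rho\,\omega_1^2}{2\omega_1(\lambda_2-\lambda_1)(\lambda_3-\lambda_1)}=\frac{\omega_1\bar\rho}{2(\lambda_2-\lambda_1)(\lambda_3-\lambda_1)}>0,
\]
which is (\ref{Eqn:a1Bdry}).

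The delicate points to check carefully are two. First, the $\O(\delta_\beta|Z|)$ and cross terms in $1+t\lambda_1$ must really be $\O(\varepsilon^3)$. Second, the neighborhood $\mathcal N$ around a general $(\tau(\beta_0),\tilde\sigma_0,\beta_0)$ must stay within the domain where Theorem~\ref{Thm:ZpmForm} and the implicit function constructions of $\phi$ hold. I would handle the second by shrinking $\mathcal N$ and using uniformity in $\beta$ of the analytic expansions. Finally, the dependence on $\varepsilon^2=t-\tau(\beta)$ in the even parts is rewritten in terms of $t$ to match the stated arguments $(t,\tilde\sigma,\beta)$.
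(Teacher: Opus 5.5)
Your proposal is correct and follows essentially the same route as the paper: substitute $X^\pm=x(Z^\pm,\phi(t,\sigma,\beta,Z^\pm))$ into the free-flow formulas, factor the determinant through the eigenvalues, and show that $1+t\lambda_1(X^\pm)=2\omega_1\delta_\beta+\O(\delta_\beta^{3/2})$ using $\lambda_1(X^\pm)-\lambda_1(\beta)=\tfrac12\omega_3(Z^\pm)^2+\O(\delta_\beta^{3/2})$. Your explicit $\varepsilon\mapsto-\varepsilon$ symmetry, with $Z^-(\varepsilon)=Z^+(-\varepsilon)$ and $\beta^-(\varepsilon)=\beta^+(-\varepsilon)$, is a cleaner justification of the paper's step of ``collecting even and odd terms'' through the auxiliary functions $\theta_1,\dots,\theta_4$ and the series $\sum g_k(t-\tau(\beta))^{k/2}$.
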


\begin{proof}    
    We recall 
$$
        \mathbf{v}^\pm(t,\sigma,\beta) 
        = \mathbf{w}(X^\pm(t,\sigma,\beta)).
$$
Note that $$
        X^\pm=x(Z^\pm,\phi(t,\sigma,\beta,Z^\pm)) 
$$
and $Z^\pm$ is given (\ref{Eqn:ZpmFinal}), where $\alpha_0,\alpha_1$ are analytic,  these may be composed into the form (\ref{Eqn:vpmForm}) with analytic functions $A_1,A_2$. We then analyze the function $A_2$. Indeed, at $t=\tau(\beta)$, 
$$
        A_2(\tau(\beta),\tilde\sigma,\beta) 
        =\alpha_1(0,\tilde\sigma, 
        \beta)D_x\mathbf{w}(\beta)r_1(\beta) 
        =\sqrt{c_1(\beta)}\lambda_1(\beta)r_1(\beta), 
$$
which implies the desired estimate on $A_2$. 

Now we move to determine the desired estimates on the density. Recall  
$$
        \rho^\pm (t,\sigma,\beta)=
        \frac{\bar \rho(X^\pm)}
        {\det\left(I + tD_x\bfw(X^\pm)\right)}
        ={\bar \rho(X^\pm)\over
        \left(1+t\lambda_2(X^\pm)\right)
        \left(1+t\lambda_3(X^\pm)\right)}
        \cdot {1\over 1+t\lambda_1\bigl(X^\pm)}
 $$
and note that 
    \begin{align}
        1+t\lambda_1(X^\pm) &= -\lambda_1(\beta)\tau(\beta) 
        +t\lambda_1(X^\pm) ,
        \nonumber\\
        &=\delta_\beta\lambda_1(\beta) 
        +t\left(\lambda_1(X^\pm)-\lambda_1(\beta) \right).
        \label{Eqn:1PlusTLambda_1} 
    \end{align} 
By the similar arguments in the above discussions, there exist analytic functions $\theta_i (i=1,\cdots,4)$ satisfying  
    \begin{align}
        &{\bar \rho(X^\pm)\over
         \left(1+t\lambda_2(X^\pm)\right)
        \left(1+t\lambda_3(X^\pm)\right) }=
        \theta_1(t,\tilde\sigma,\beta)\pm 
        \sqrt{\delta_\beta}
        \theta_2(t,\tilde\sigma,\beta) , 
        \label{Eqn:rhoBarOverLambda23}
        \\
        &1+t\lambda_1\bigl(X^\pm)=
        \delta_\beta\left(\theta_3(t,\tilde\sigma,\beta)
        \pm \sqrt{\delta_\beta} 
        \theta_4(t,\tilde\sigma,\beta)\right).
        \label{Eqn:lam_1}
    \end{align} 
    Then, by using Eqs. \eqref{Eqn:ZpmFinal} and \eqref{Eqn:phiExp}, we have 
$$
        \lambda_1(X^\pm)-\lambda_1(\beta) 
        =\frac{\omega_3(\beta)}{2}(Z^\pm)^2 
        +\O\left(\delta_\beta^{3/2}\right).
$$
With the aid of expansions for $Z^\pm$, we can obtain the following
    \begin{align}
        \delta_\beta\lambda_1(\beta)+ t \left( \lambda_1(X^\pm) -\lambda_1(\beta)\right)
        &=-\omega_1(\beta)\delta_\beta  
        +{\tau(\beta)\omega_3(\beta)\over 2}
        c_1(\beta)\delta_\beta+\O(\delta_\beta^{3/2})
        \nonumber\\
        &=2 \omega_1(\beta)\delta_\beta
        +\O\left(\delta_\beta^{3/2}\right) .
        \label{Eqn:tLam1}
    \end{align} 
    This allows us to expand the blowup of the denominator in the following manner
    \begin{equation}
        \frac{1}{1+t\lambda_1(X^\pm)} 
        = \frac{1}{2\omega_1(\beta)(t-\tau(\beta))} 
        \left( 
        1+\sum_{k\geq1}g_k(t-\tau(\beta))^{k/2}
        \right) \label{Eqn:tLam1Recip}, 
    \end{equation} 
    which, after collecting even and odd terms appropriately, gives us the desired form of $\rho^\pm$ posterior to singularity formation. To complete the proof, we need to show the form of $a_1$ and it is strictly positive as $t\rightarrow\tau(\beta)+$. Indeed,  
$$
        a_1(\tau(\beta),\tilde\sigma,\beta) 
        =\frac{\omega_1(\beta)\bar\rho(\beta)}
        {2\left(\lambda_2(\beta)-\lambda_1(\beta)\right)
        	\left(\lambda_3(\beta)-\lambda_1(\beta)\right)}>0,
$$ where by assumptions we have $\lambda_3>\lambda_1,\lambda_2>\lambda_1$ and $\bar{\rho}(\beta)>0$.
\end{proof}

\section{Local construction of singular hypersurfaces}
\label{s:5}
\setcounter{equation}{0} 

By utilizing the results of Section 4, we aim to  seek a local analytic solution $  (t,\zeta)\longmapsto (y,\eta,\bfv) $ 
to the system of equations 
\begin{align} 
    \left\{ 
    \begin{aligned}
        &y_t = \bfv ,
        \\
        &\eta_t = \langle \bfv-\bfv^+,p\times 
        q\rangle\rho^+-\langle \bfv-
        \bfv^-,p\times q\rangle\rho^- ,
        \\
        &\bfv_t=\langle \bfv-
        \bfv^+,p\times q\rangle\rho^+
        \frac{\bfv^+-\bfv}{\eta}
        -\langle \bfv-\bfv^-,p\times q
        \rangle\rho^-\frac{\bfv^-
        -\bfv}{\eta} ,
        \\
        &p_t-\bfv_{\zeta_1} = 0 ,
        \\
        &q_t-\bfv_{\zeta_2} = 0 , 
    \end{aligned}
    \right. \label{Eqn:SystemEvolution}
\end{align} 
where $  p=y_{\zeta_1},
q=y_{\zeta_2}$. This system is coupled with boundary data
\begin{align}
    \left\{ 
    \begin{aligned}
        & y(\tau(\zeta),\zeta)=\gamma^*(\zeta) 
        +\tau(\zeta)\bfw(\gamma^*(\zeta)), 
        \\
        &\eta(\tau(\zeta),\zeta)=0, 
        \\
        &\bfv(\tau(\zeta),\zeta)=\bfw(\gamma^*(\zeta)). 
    \end{aligned}
    \right. 
    \label{Eqn:BdryConditions} 
\end{align} 

In order to state the main result, we need to introduce 
a suitable space of analytic vector fields. More precisely, we fix a closed ball ${\mathbb{B}}= \{ x\in \R^3\,;~|x|\leq r_0\} $
and a constant
$\varrho>0$.   We call $X_{{\mathbb{B}}, \varrho}$ the space of all
analytic functions $\phi:\R^3\mapsto\R$ such that
\bel{avf}   \|\phi\|_{{\mathbb{B}}, \varrho}~\doteq~\sum_{n=0}^\infty {\varrho^n\over n!}
\max_{|\alpha|=n} \sup_{x\in {\mathbb{B}}} \bigl| D^\alpha \phi(x)\bigr|~<~+\infty.\eeq
Here $\alpha=(\alpha_1,\alpha_2,\alpha_3)$ is a multi-index, and 
$D^\alpha \phi\doteq \partial_{x_1}^{\alpha_1}\partial_{x_2}^{\alpha_2}\partial_{x_3}^{\alpha_3}\phi$.
We remark that this is the set of all functions whose Taylor series at every point $x\in {\mathbb{B}}$ converges on a disc
of radius $\varrho/2$.

In the following theorem, we consider the space of all initial data
$(\bfw,\bar\rho)= (w_1, w_2, w_3, \bar \rho)$ such that $w_1, w_2, w_3, \bar \rho\in X_{{\mathbb{B}}, \varrho}$, for some given ${\mathbb {B}}, \varrho$.   Within this space, we shall construct
piecewise analytic solutions to the equations of pressureless gases (\ref{SP})-(\ref{ic})
beyond singularity formation, for a generic set of initial data.

\begin{theorem}\label{thm5.1} For the Cauchy problem (\ref{SP})-(\ref{ic}),  we assume the initial data
	are analytic functions and satisfy {\bf (A1)-(A2)}.
Then, there exists an analytic solution up to some time $t_0$ where the $\C^1$ norm of the velocity $\bfv$ blows up.
	This solution can be extended to a larger interval  $[0, t_0+\delta_0]$. For $t_0<t\leq t_0+\delta_0$,
	the solution is piecewise analytic, and contains a singular surface $\Gamma^t$ where a positive amount of mass is concentrated.
\end{theorem}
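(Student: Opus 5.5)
The plan is to reduce the construction of the singular surface to a singular (Fuchs / Briot--Bouquet type) Cauchy--Kovalevskaya problem, following the 2D scheme of \cite{BCH2}. For $t<t_0=\tau(x_0)$ the solution is given explicitly by (\ref{Eqn:velocityTransport})--(\ref{Eqn:densityTransport}) and is analytic. Blow-up of $\|\bfv\|_{\C^1}$ at $t_0$ follows from $\det(I+tD\bfw(x_0))\to 0$ together with (A1)--(A2). For $t>t_0$ we look for $(y,\eta,\bfv)$ solving (\ref{Eqn:SystemEvolution}) with boundary data (\ref{Eqn:BdryConditions}) on $\Sigma=\{t=\tau(\zeta)\}$. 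The outer states $\bfv^\pm,\rho^\pm$ are read off from Theorems \ref{Thm:ZpmForm} and \ref{Theorem:vpm}, evaluated along the unknown surface $y(t,\zeta)$.

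\textbf{Step 1: desingularizing variables.} Introduce $s=\sqrt{t-\tau(\zeta)}$ as a new time variable, so the boundary becomes $s=0$ and $\partial_t=(2s)^{-1}\partial_s$. Motivated by (\ref{Eqn:vpmForm})--(\ref{Eqn:rhopmForm}), seek
\[
y=\gamma^*+\tau\bfw+s^2\,Y,\qquad \bfv=\bfw+s^2\,V,\qquad \eta=s\,N ,
\]
possibly with $s^3$ corrections. The jumps $\bfv^\pm-\bfv$ are then $\pm s A_2+O(s^2)$ and $\rho^\pm\sim a_1/s^2$, so the flux terms $\langle\bfv-\bfv^\pm,p\times q\rangle\rho^\pm$ are $O(s^{-1})$. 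Hence $\eta_t=(2s)^{-1}\eta_s$ is balanced, and similarly for $\bfv_t$, where the factor $1/\eta\sim 1/(sN)$ is compensated by the $s^2$ from two jump factors. The leading-order balance gives algebraic equations for $N(0,\zeta)$ and $V(0,\zeta)$. I expect these to be solvable with $N(0)>0$, using $a_1>0$ and $A_2\parallel r_1$ from (\ref{Eqn:A2Bdry})--(\ref{Eqn:a1Bdry}).

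\textbf{Step 2: Briot--Bouquet form.} The variable $\tilde\sigma$ enters through the preimage equation, since $X^\pm$ depend on the point $y(t,\zeta)$. I would therefore recompute $\bfv^\pm,\rho^\pm$ at a general point $y=\xi(t,x(\sigma,\beta))$ near the surface by solving the preimage problem of Section 4 for $y$ in an $s^2$-neighbourhood of $\xi(t,x(0,\zeta))$. This expresses $\bfv^\pm,\rho^\pm$ as analytic functions of $(s,\zeta,Y)$ after factoring out the explicit powers of $s$. Substituting into (\ref{Eqn:SystemEvolution}) yields a first-order system
\[
s\,\partial_s U=G(s,\zeta,U,\partial_\zeta U),\qquad U=(Y,V,N,\dots).
\]
The $\zeta$-derivatives enter only through $p,q$ and $\bfv_\zeta$. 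The equations $p_t=\bfv_{\zeta_1}$ and $q_t=\bfv_{\zeta_2}$ are automatically consistent once $p=y_{\zeta_1}$, so I would work directly with $y$. Because $\tau$ depends on $\zeta$, $\partial_\zeta$ in the new variables picks up $-\tau_\zeta(2s)^{-1}\partial_s$ terms. These must be absorbed by rearranging, which is possible since the $\partial_s U$ coefficient matrix is then $sI+O(s)\cdot$(bounded).

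\textbf{Step 3: analytic existence.} Apply the version of the Cauchy--Kovalevskaya theorem for Fuchsian systems used in \cite{BCH2}. This requires that $D_UG(0,\zeta,U_0)$ have no eigenvalues in $\{1,2,3,\dots\}$, so that the formal power series in $s$ is uniquely determined, and then a majorant argument in the norms (\ref{avf}). This step is the main obstacle: the $3\times 3$ eigenstructure and the non-orthogonality of $r_1$ to $T\Gamma^*$ (see the Remarks) make the linearized matrix at $s=0$ considerably more coupled than in 2D. I would first compute it in the frame $\{r_1,h_2,h_3\}$, where I expect it to be block-triangular, and verify the nonresonance condition block by block. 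The last task is to check that $\eta>0$ for small $s>0$ and that the surface is embedded. This gives the piecewise analytic solution on $[0,t_0+\delta_0]$, with singular surface $\Gamma^t=\{y(t,\zeta):\tau(\zeta)\le t\}$ carrying positive mass.
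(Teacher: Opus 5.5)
Your architecture matches the paper's: desingularize at $t=\tau(\zeta)$, fix the boundary data by a leading-order balance, reduce to a Briot--Bouquet system, and check nonresonance. Using $s=\sqrt{t-\tau(\zeta)}$ instead of $\tilde t=t-\tau(\zeta)$ is harmless; the paper uses the $\pm$ symmetry $U^-(\varepsilon)=-U^+(-\varepsilon)$ to keep only integer powers, and in $s$ the Fuchsian exponents simply double.

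The genuine gap is in Step 2. You claim that, with $y=\gamma^*+\tau\bfw+s^2Y$, the outer states $\bfv^\pm,\rho^\pm$ become analytic functions of $(s,\zeta,Y)$; this is false. Near the fold, the $r_1$-component of $\xi(t,x(\sigma,\zeta))-\xi(t,x(0,\zeta))$ behaves like $-\omega_1 s^2\sigma+\tfrac{t\omega_3}{6}\sigma^3$. Hence the two extra preimages $X^\pm$ exist only if the $r_1$-component of $y-\xi(t,x(0,\zeta))=s^2(Y-\bfw)$ is $\O(s^3)$. Moreover, by Theorems \ref{Thm:ZpmForm}--\ref{Theorem:vpm}, $\bfv^\pm,\rho^\pm$ are analytic in $\tilde\sigma=\sigma/\delta_\beta$, where $\sigma\approx -l_1\cdot(y-\xi(t,x(0,\zeta)))/(\omega_1 s^2)$ is the middle preimage. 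In your variables this gives $\tilde\sigma\approx -l_1\cdot(Y-\bfw)/(\omega_1 s^2)$. Consequently:
\begin{itemize}
\item for any $Y$ with $l_1\cdot(Y-\bfw)\neq 0$, the point $y$ leaves the three-preimage region as $s\to 0$, so $\bfv^\pm,\rho^\pm$ are not even defined on a neighbourhood of $(0,\zeta,Y(0))$;
\item your right-hand side $G$ contains $s^{-2}\,l_1\cdot(Y-\bfw)$, so it is not analytic at $s=0$, and the Fuchsian Cauchy--Kovalevskaya theorem cannot be applied.
\end{itemize}
An $s^2$-neighbourhood of $\xi(t,x(0,\zeta))$ is far too large in the $r_1$-direction. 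What you are missing is the anisotropic fold scaling: the displacement along $r_1$ must be $\O(s^4)$.

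The paper builds this in by taking as unknowns the Lagrangian coordinates of the middle preimage, $y=x(\sigma,\beta)+t\bfw(x(\sigma,\beta))$, rescaled as $\hat\sigma=\sigma/(t-\tau(\zeta))$ and $\hat\beta=(\beta-\zeta)/(t-\tau(\zeta))$, together with $S=\sigma_t$, $Z=\beta_t$ and $\hat\eta=\eta/\sqrt{t-\tau(\zeta)}$. In your Eulerian variables you would equivalently have to replace $l_1\cdot(Y-\bfw)$ by $s^2W$ with $W$ a new unknown.

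This is not a technicality. Your $l_1\cdot V(0)$ corresponds to $-2\omega_1 S_0$. The $\tilde\sigma$-dependence of $Z^\pm$ (through $c_2$) and of $a_2/a_1$ (through $L$) enters the coefficient $-\tfrac{15}{4}$ of $S_0$ in the condition $F_\sigma|_{t=\tau(\zeta)}=0$. That coefficient is exactly what makes the leading-order algebraic problem you only ``expect'' to be solvable actually uniquely solvable. Positivity of $N(0)$ additionally needs the sign of the triple product $\mu_0$, not just $a_1>0$ and $A_2\parallel r_1$.

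The same dependence produces the entry $-\tfrac34$ in the $(\hat\sigma,S)$ block of $\Xi$, with rows $(-1,1)$ and $(-\tfrac34,-3)$. Its eigenvalues $-\tfrac32,-\tfrac52$ are the nontrivial part of the nonresonance check. Your Step 3 leaves that computation unverified, as well as the condition $\mathbf{H}_{\vartheta_\zeta}(0,\zeta,0,0)=0$ required by the Briot--Bouquet theorem of \cite{li}. So, as written, the proposal does not reach the conclusion.
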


We begin with some notations. We denote $\mathcal{B}$ as our basis and $\mathcal{B}^*$ as its dual basis, which are given by 
$$
    \mathcal{B}(\sigma,\beta):= \{r_1(\sigma,\beta), 
    r_2^*(\sigma,\beta),r_3^*(\sigma,\beta)\} ,
$$$$
    \mathcal{B}^*(\sigma,\beta):= \{l_1^*(\sigma,\beta), 
    l_2^*(\sigma,\beta),l_3^*(\sigma,\beta)\}.
$$
Here we claim that $l_2^*,l_3^*$ are left eigenvectors of $D\bfw$ corresponding to eigenvalues $\lambda_2,\lambda_3$, respectively. Indeed, let $l_1,l_2,l_3$ denote the dual left eigenvectors relative to $r_1,r_2,r_3$. Then, we have the following results 
$$
    l_2(\sigma,\beta)^\top r_1(\sigma,\beta) = 0,\quad
    l_2(\sigma,\beta)^\top r_2^*(\sigma,\beta) 
    =l_2^\top\left(b_1r_1+b_2r_2\right)=b_2,\quad
    l_2(\sigma,\beta)^\top r_3^*(\sigma,\beta) 
    =l_2^\top\left(b_3r_1+b_4r_3\right)=0 
$$for some constants $b_i$, where $b_2,b_4\neq0$. 
Analogous computations can hold for $l_3(\sigma,\beta)$. Therefore, by the uniqueness of the dual basis, we observe that $l_2^*(\sigma,\beta),l_3^*(\sigma,\beta)$ are left eigenvectors of $D_x\mathbf{w}$. Note that $l_1^*(\sigma,\beta)$ is not necessarily a left eigenvector. 

Note that $y$ denotes the path of a particle along the surface. For $\tau(x(\theta))<t$,   
\begin{equation}
    y=x(\theta)+t\bfw(x(\theta)), 
    \label{Eqn:yTransport}
\end{equation} 
where 
$
    \theta=[\sigma,\beta_1,\beta_2]^\top. 
$
Define the Jacobian of $x$ with respect to $\theta$ as$$
    J(\theta)=x_\theta(\theta) 
    =[x_\sigma,x_{\beta_1},x_{\beta_2}]. 
$$
By construction, $x_\sigma=r_1$, and 
$
    J(0,\beta)=[r_1(\beta),R_0(\beta)].
$

Differentiating (\ref{Eqn:yTransport}) gives the following identities. 
\begin{align}
    y_t&=\mathbf{w}\bigl(x(\theta)\bigr)
    +\left(I+tD_x\mathbf{w}\bigl(x(\theta)\bigr)\right)
    J(\theta)\theta_t, \label{Eqn:yt}
    \\
    y_{\zeta_1}&=\left(I+tD_x\mathbf{w}
    \bigl(x(\theta)\bigr)\right)
    J(\theta)\theta_{\zeta_1},\label{Eqn:yzeta1}
    \\
    y_{\zeta_2}&=\left(I+tD_x\mathbf{w}
    \bigl(x(\theta)\bigr)\right)J(\theta)
    \theta_{\zeta_2},\label{Eqn:yzeta2}
    \\
    y_{tt}&=2D_x\mathbf{w}\bigl(x(\theta)\bigr)
    J(\theta)\theta_t+tD_x^2\mathbf{w}
    \bigl(x(\theta)\bigr)\left[J(\theta)\theta_t,
    J(\theta)\theta_t\right]
    \nonumber\\
    &\quad+\left(I+tD_x\mathbf{w}\bigl(x(\theta)\bigr)
    \right)\left(J(\theta)\theta_{tt}+\Lambda_J
    (\theta_t)\right), 
    \label{Eqn:ytt}
\end{align}
where $\Lambda_J:\mathbb{R}^3\rightarrow\mathbb{R}^3$ represents the quadratic map
$$
    \Lambda_J(v)=\sum_{i,j=1}^3 
    x_{\theta_i\theta_j}v_iv_j.
$$

Substituting Eq. (\ref{Eqn:yt}) into the second equation of Eq. \eqref{Eqn:SystemEvolution} gives  
\begin{align}
    \eta_t&=(\rho^+(\bfv-\bfv^+)-\rho^-
    (\bfv-\bfv^-))^\top p\times q 
    \nonumber\\
    &=(\rho^+(\bfw+(I+tD_x\bfw)
    J\theta_t-\bfv^+)-\rho^-(\bfw+
    (I+tD_x\bfw)J\theta_t-\bfv^-))
    ^\top p\times q \nonumber\\
    &=(\rho^+(\bfw-\bfv^+)-\rho^-(\bfw-\bfv^-) 
    +(\rho^+-\rho^-)(I+tD_x\bfw)J\theta_t)^\top 
    p\times q \nonumber\\
    &:=H(t,\theta,\theta_t, \theta_\zeta) .
    \label{Eqn:HDefinition}
\end{align} 
The third equation of (\ref{Eqn:SystemEvolution}) can be rewritten in the form
\begin{align}
    \bfv_t=\frac{1}{\eta}(\rho^+\left((y_t
    -\bfv^+)^\top (p\times q)\right)(\bfv^+-y_t)
    -\rho^-\left((y_t-\bfv^-)^\top (p\times q)
    \right)(\bfv^--y_t)). \label{Eqn:vtInt}
\end{align}

By comparing Eqs. \eqref{Eqn:vtInt} and  \eqref{Eqn:ytt}, we obtain the following equation for $\theta_{tt}$, 
\begin{align}
    (I+tD_x\mathbf{w})J\theta_{tt}=&-(I+tD_x\bfw)
    \Lambda_J(\theta_t)+(-2D_x\bfw J\theta_t-
    tD_x^2\bfw[J\theta_t,J\theta_t])
    \nonumber\\
    &+\frac{1}{\eta}\biggl(\rho^+\left((y_t
    -\bfv^+)^\top p\times q\right)(\bfv^+-y_t)
    \nonumber\\
    &-\rho^-\left((y_t-\bfv^-)^\top p\times q\right)
    (\bfv^--y_t)\biggr).
    \label{Eqn:Q_theta_tt_complicated}
\end{align} 
Introduce
\begin{align}
    &F:=-(I+tD_x\bfw)\Lambda_J(\theta_t) 
    +(-2D_x\bfw J\theta_t
    -tD_x^2\bfw[J\theta_t,J\theta_t]) 
    \label{Eqn:F_Definition}, 
    \\
    &G:=\rho^+\left((y_t
    -\bfv^+)^\top p\times q\right)(\bfv^+-y_t)
    -\rho^-\left((y_t-\bfv^-)^\top p\times q\right)
    (\bfv^--y_t) 
    \label{Eqn:G_Definition}, 
\end{align} 
then Eq.
\eqref{Eqn:Q_theta_tt_complicated} can be written as 
$$
    (I+tD_x\mathbf{w})J\theta_{tt}=F(t,\theta,
    \theta_t) 
    +\frac{1}{\eta}G(t,\theta, 
    \theta_t,\theta_\zeta). 
$$
Let
$$
    L_*^\top:=\begin{bmatrix}
        l_2^* \\
        l_3^*
    \end{bmatrix}, 
    \quad
    \mathcal{P}:=L_*^\top 
    (I+tD\bfw)J_\beta, \quad J_\beta:=[x_{\beta_1},x_{\beta_2}].
$$
Note that, at $\sigma=\beta=0$ and $t=\tau(0)$,  
$$
    \P_0=\begin{bmatrix}
        1+\tau\lambda_2&0\\
        0&1+\tau\lambda_3 
    \end{bmatrix}.
$$
We see that $\P$ is locally invertible. Thus, this gives rise to an equivalent form of our system
\begin{align}
    &\eta_t = H \label{Eqn:eta_t},
    \\
    &\sigma_{tt}=\frac{l_1^\top(F+\eta^{-1}G)}
    {1+t\lambda_1}-l_1^\top J_\beta \beta_{tt}, 
    \label{Eqn:Sigma_tt}
    \\
    &\beta_{tt}=\mathcal{P}^{-1} 
    L_*^\top (F+\eta^{-1}G).  
    \label{Eqn:beta_tt}
\end{align} 

\subsection{Rewriting the System of Equations} 

We seek to rewrite Eqs. (\ref{Eqn:eta_t})-(\ref{Eqn:beta_tt}) in a more convenient form. Indeed, we note from (\ref{Eqn:BdryConditions}) that
$
    1/\eta$ approaches $\infty 
$
along $t=\tau(\zeta)$. In this section, we shall first define new variables $\hat\eta$, $\hat\theta$ and rewrite (\ref{Eqn:eta_t})--(\ref{Eqn:beta_tt}) as a first order system of equations. 

To this end, we define  
$$
    S:=\frac{\partial\sigma}{\partial t} ,
    \quad
    Z:=\frac{\partial\beta}{\partial t},
$$
and the following rescaled quantities 
$$
    \hat\eta:=\frac{\eta}{\sqrt{t-\tau(\zeta)}} ,\quad
    \hat\beta:=\frac{\beta-\zeta}
    {t-\tau(\zeta)} ,
    \quad
    \hat\sigma:=\frac{\sigma}{t-\tau(\zeta)}, 
    \quad
    \hat\theta:=[\hat\sigma,\hat\beta]^\top,$$
which will be shown to be regular in the limit as $t\rightarrow\tau(\zeta)$. In addition, we set 
$\delta_\zeta:=t-\tau(\zeta). $

Now it is ready to rewrite (\ref{Eqn:HDefinition}) and (\ref{Eqn:F_Definition})--(\ref{Eqn:G_Definition}) in terms of the new variables. Note that from Theorem \ref{Theorem:vpm} we find that $\bfv^\pm(t,\sigma,\beta)$  relates to $\bfw(x(0,\beta))$, whereas (\ref{Eqn:HDefinition}) depends on $\bfw(x(\sigma,\beta))$. Therefore, we need to expand $\bfw$ w.r.t. $\sigma$. Indeed, we have
$$
    \left.\partial_\sigma(\bfw(x(\sigma,\beta)))
    \right|_{\sigma=0}
    =\left.D_x\bfw(x(\sigma,\beta))
    r_1(\sigma,\beta)\right|_{\sigma=0} 
    =\lambda_1(\beta)r_1(\beta). 
$$
Therefore, 
$$
    \bfw(x(\sigma,\beta))=\bfw(x(0,\beta)) 
    +\sigma\lambda_1(\beta)r_1(\beta) 
    +\sigma^2\mathcal{I}(\sigma,\beta), 
$$
where $\mathcal{I}$ is analytic in its arguments. Substituting $
    \sigma=\delta_\beta\tilde\sigma $
into the above formula 
yields 
\begin{equation}
    \bfw(x(\sigma,\beta))-\bfw(x(0,\beta)) 
    =\delta_\beta\tilde\sigma\lambda_1(\beta) 
    r_1(\beta)+\delta_\beta^2\tilde\sigma^2 
    \mathcal{I}(\delta_\beta\tilde\sigma, 
    \beta)=:\delta_\beta A_3(t,\tilde\sigma,\beta). 
    \label{Eqn:wExpanded}
\end{equation} 

 By defining
\begin{equation}
    \hat A_1:=\delta_\beta (A_1-A_3),
    \label{Eqn:A1HatDefn} 
\end{equation} 
and inserting Equations (\ref{Eqn:vpmForm})--(\ref{Eqn:rhopmForm}) into (\ref{Eqn:HDefinition}), we can obtain the formula for $H$
\begin{align}
    H&=\frac{2}{\sqrt{\delta_\beta}}\left(
    -A_2a_1-\hat A_1a_2
    +a_2(I+tD_x\mathbf{w}
    (x(\theta)))
    J\theta_t\right)^\top p\times q \nonumber
    \\
    &:=\frac{1}{\sqrt{\delta_\zeta}} 
    H_1(t,\zeta,\hat\theta,
    \theta_t,\theta_\zeta) . 
    \label{Eqn:H_in_detail}
\end{align} 
The details of  above calculation are shown in Appendix \ref{App:H_Computation} and are not listed here for brevity.  

Next, we seek to rewrite the left hand side of Equation (\ref{Eqn:HDefinition}) in terms of $\hat\eta$.  Since 
$$
    \hat\eta_t=\frac{\eta_t}{\sqrt{t-\tau(\zeta)}} 
    -\frac{\hat\eta}{2(t-\tau(\zeta))},
$$
we have 
$$
    \hat\eta_t=\frac{H_2}{t-\tau(\zeta)} ,
    \quad
    H_2:=H_1-\frac{1}{2}\hat\eta .
$$
Then, we turn to rewrite Eqs. (\ref{Eqn:F_Definition})--(\ref{Eqn:G_Definition}). To this end, we summarize the previously derived relations. 
\begin{align}
    &y_t-\bfw=(I+tD_x\bfw)J\theta_t=:B ,
    \label{Eqn:yt_minus_w}
    \\
    &\textbf{v}^\pm-\bfw=
    \hat A_1\pm A_2 \delta_\beta^{1/2},
    \label{Eqn:vpmMinusw}
    \\
    &y_{\zeta_1}=(I+tD_x\bfw)
    J\theta_{\zeta_1} ,
    \\
    &y_{\zeta_2}=(I+tD_x\bfw)J
    \theta_{\zeta_2} ,
    \\
    &\rho^\pm=a_1\delta_\beta^{-1}\pm 
    a_2\delta_\beta^{-1/2}.
\end{align} 
Here we claim that $Z$ vanishes along the boundary $t=\tau(\zeta)$. Indeed, inserting the third equation of (\ref{Eqn:BdryConditions}) into Eq. (\ref{Eqn:yt_minus_w}) gives that at $t=\tau(\zeta)$, 
$
    0= (I+\tau(\zeta)D_x\bfw)J\theta_t .
$
Furthermore, at $\sigma=0$ we have
$
    J\theta_t=r_1S+R_0Z, 
$
and thus, we obtain 
$$
    0= (I+\tau(\zeta)D_x\bfw)(r_1S+R_0Z) .
$$
Since at $t=\tau(\zeta)$,
$
    (I+\tau(\zeta)D_x\bfw)r_1=0, 
$
the $S$ dependence vanishes, giving 
$$
    0= (I+\tau(\zeta)D_x\bfw)R_0Z. 
$$
Multiplying on both sides of the above equation by $L_*^\top$ and noting the local invertibility of 
$
    L_*^\top(I+\tau(\zeta)D_x\bfw)R_0, 
$
we arrive at 
\begin{align}
    Z(\tau(\zeta),\zeta)=0. 
    \label{Eqn:ZBoundaryCond}
\end{align}

Furthermore, as in the 1-D and 2-D cases, in order for the integrability conditions to hold, we require that $\sigma(\tau(\zeta),\zeta)=0$. Therefore, our solutions may be expanded as  
\begin{align}
    &\sigma(t,\zeta)=S_0(\zeta)\delta_\zeta
    +\O(\delta_\zeta^2) ,\label{Eqn:sigmaForm}
    \\
    &\beta(t,\zeta)=\zeta+\O(\delta_\zeta^2) .
    \label{Eqn:betaForm}
\end{align} 
By using Eqs. (\ref{Eqn:sigmaForm})--(\ref{Eqn:betaForm}), we can expand Eqs. (\ref{Eqn:yt_minus_w})--(\ref{Eqn:vpmMinusw}) to derive the leading order terms. Due to the vanishing of $Z$ along the boundary, we find that
$$
    \beta(t,\zeta)-\zeta=\O(\delta_\zeta^2).$$
Therefore, 
$$
    \delta_\beta=\delta_\zeta+\O(\delta_\zeta^2).  
$$

We then compute 
$$
    y_t-\bfw=(I+tD_x\bfw)J\theta_t  
   =(I+tD_x\bfw)(r_1S+J_\beta Z), 
$$
where 
$
    J_\beta(\sigma,\beta)=R_0(\zeta)
    +\O(\delta_\zeta). 
$
Noting (\ref{Eqn:sigmaForm})--(\ref{Eqn:betaForm}),
we have $$
    \sigma(t,\zeta)=S_0(\zeta)\delta_\zeta+\O
    (\delta_\zeta^2), \quad 
    Z(t,\zeta)=Z_1(\zeta)\delta_\zeta+\O
    (\delta_\zeta^2). 
$$
It follows that 
\begin{align}
    B=\delta_\zeta(-\omega_1 S_0 r_1+(I+\tau(\zeta)D_x\bfw)R_0Z_1) 
    +\O(\delta_\zeta^2). 
    \label{Eqn:B_Comp}
\end{align} 
Multiplying on the left side by $l_1^\top$ gives 
$$
    l_1^\top B=-\omega_1S_0\delta_\zeta 
    +\O(\delta_\zeta^2). 
$$

Now we consider the quantity  $\bfv^\pm-\bfw$. We first recall the form of $Z^\pm$ from (\ref{Eqn:ZpmFinal}), 
$$
    Z^\pm(t,\tilde\sigma,\beta)=\pm
    \sqrt{c_1\delta_\beta}-\frac{1}{2}(c_2(\tilde\sigma,\beta)+c_1(\beta)c_3(\beta))\delta_\beta 
    +\O(\delta_\beta^{3/2}). 
$$ 
By (\ref{Eqn:c_2}), we note that
$$
    c_2(\tilde\sigma,\beta)=\tilde\sigma 
    +\frac{6q_8(\beta)q_5(\beta)\omega_1(\beta)}
    {\omega_3(\beta)}=: \tilde\sigma 
    +c_2^0(\beta). 
$$In addition, 
 $\tilde\sigma=S_0$ along $t=\tau(\zeta)$. As a result, we have 
$$
    -\frac{1}{2}(c_2(\tilde\sigma,\beta) 
    +c_1(\beta)c_3(\beta))=-\frac{1}{2}\tilde\sigma 
    +b^*(\beta)=-\frac{1}{2}S_0+b^*(\beta) 
    +\O(\delta_\zeta),
$$
 where $b^*(\beta)$ is independent of $S_0$.
It further follows that 
\begin{align}
    Z^\pm-\sigma&=\pm\sqrt{c_1(\beta)}
    \delta_\beta^{1/2}+\left(-\frac{1}{2}S_0+b^*(\beta) 
    -S_0\right)\delta_\beta+\O(\delta_\beta^{3/2}) 
    \nonumber\\
    &=\pm\sqrt{c_1(\beta)}
    \delta_\beta^{1/2}+\left(-\frac{3}{2}S_0+b^*(\zeta) 
    \right)\delta_\beta+\O(\delta_\beta^{3/2}). 
    \label{Eqn:ZpmSigma}
\end{align}

Recalling the expansion for $\phi_0$,  we have
\begin{align}
    \beta^\pm-\beta = &(t-\tau(\beta))
    \left(q_5(\beta)({Z^\pm}^2-\sigma^2) 
    + q_6(\beta) ({Z^\pm}^3-\sigma^3) \right) 
    + q_7(\beta)({Z^\pm}^4-\sigma^4) 
    \nonumber\\
    &+\O\left((t-\tau(\beta))\left|{Z^\pm}^4
    -\sigma^4\right| 
    +\left|{Z^\pm}^5-\sigma^5 \right|\right), 
    \label{Eqn:phiBetaExp}
\end{align} 
which 
implies that  
$$
    \beta^\pm-\beta=\O(\delta_\zeta^2). 
$$
Note that the Taylor expansion for  $x$ is given by
\begin{align}
    x(Z^\pm,\beta^\pm)-x(\sigma,\beta)&= 
    x_\sigma(\sigma,\beta)(Z^\pm-\sigma) 
    +\frac{1}{2}x_{\sigma\sigma}(Z^\pm-\sigma)^2
    +\O(|Z^\pm-\sigma|^3+|\beta^\pm-\beta|) 
    \nonumber\\
    &=r_1(Z^\pm-\sigma)+\frac{1}{2}\partial_\sigma 
    r_1(Z^\pm-\sigma)^2+\O(\delta_\zeta^{3/2}), 
    \label{Eqn:xPmMinusx}
\end{align} we can derive  the following expansion 
\begin{align}
    &\bfv^\pm-\bfw=\bfw(x(Z^\pm,\beta^\pm)) 
    -\bfw(x(\sigma,\beta)) 
    \nonumber\\
    &=D_x\bfw(x(Z^\pm,\beta^\pm)-x(\sigma,\beta)) 
    +\frac{1}{2}D_x^2 \bfw 
    [x(Z^\pm,\beta^\pm) 
    -x(\sigma,\beta),x(Z^\pm,\beta^\pm) 
    -x(\sigma,\beta)]
    \nonumber\\
    &\quad+\O(|x(Z^\pm,\beta^\pm) 
    -x(\sigma,\beta)|^3) 
    \nonumber\\
    &=D_x\bfw r_1(Z^\pm-\sigma)+\frac{1}{2} 
    D_x\bfw \partial_\sigma r_1 (Z^\pm-\sigma)^2 
    +\frac{1}{2} D_x^2\bfw [r_1,r_1] 
    (Z^\pm-\sigma)^2+\O(\delta_\zeta^{3/2}) 
    \nonumber\\
    &=\lambda_1 r_1(Z^\pm-\sigma)+\frac{1}{2} 
    D_x\bfw \partial_\sigma r_1 (Z^\pm-\sigma)^2 
    +\frac{1}{2}D_x^2\bfw [r_1,r_1] 
    (Z^\pm-\sigma)^2+\O(\delta_\zeta^{3/2}).
    \label{Eqn:vpmMinusW}
\end{align} 
Multiplying by $l_1^\top$ on the both sides of (\ref{Eqn:vpmMinusW}) reduces to a scalar equation  
$$
    l_1^\top(\bfv^\pm-\bfw)=\lambda_1(Z^\pm 
    -\sigma)+\frac{1}{2}l_1^\top\left( 
    D_x\bfw \partial_\sigma r_1
    +D_x^2\bfw [r_1,r_1]\right)(Z^\pm-\sigma)^2 
    +\O(\delta_\zeta^{3/2}). 
$$
 Note that the second term
$$
    \frac{1}{2}l_1^\top\left( 
    D_x\bfw \partial_\sigma r_1
    +D_x^2\bfw [r_1,r_1]\right)(Z^\pm-\sigma)^2
$$
is independent of $S_0$ at order of  $\delta_\zeta$, we have 
$$
    l_1^\top (\bfv^\pm-\bfw)=\mp\omega_1
    \sqrt{c_1}\delta_\zeta^{1/2}+\omega_1\left( 
    \frac{3}{2}S_0+\tilde\psi(\zeta)\right) 
    \delta_\zeta + \O(\delta_\zeta^{3/2}), 
$$
where $\tilde\psi$ is independent of $S_0$. This can be simplified as 
$$
    l_1^\top (\bfv^\pm-\bfw)=\mp\omega_1^2 
    \sqrt{\frac{6\delta_\zeta}{\omega_3}}
    +\omega_1\left(\frac{3}{2}S_0 
    +\tilde\psi(\zeta)\right) 
    \delta_\zeta + \O(\delta_\zeta^{3/2}).   
$$
In turn, it implies that   
\begin{align}
    l_1^\top(\bfv^\pm-y_t)=\mp\omega_1^2 
    \sqrt{\frac{6\delta_\zeta}{\omega_3}} 
    +\omega_1\left(\frac{5}{2}S_0+\tilde\psi(\zeta)
    \right)\delta_\zeta+\O(\delta_\zeta^{3/2}). 
    \label{Eqn:lTvpmyt}
\end{align}

Our goal is to obtain the order of functions $F, G$.  Note that $\hat A_1=\O(\delta_\zeta), A_2=\O(1)$,
We can analyze  the cross product
\begin{align*}
    p\times q&=(I+tD_x\bfw)J\theta_{\zeta_1}
    \times(I+tD_x\bfw)J\theta_{\zeta_2} 
    \nonumber\\
    &=\det\left(I+tD_x\bfw\right) 
    \left(\left(I+tD_x\bfw\right)^{-1}
    \right)^\top
    J\theta_{\zeta_1}\times J\theta_{\zeta_2} 
    \nonumber\\
    &=\O(\delta_\zeta)
    \left(\left(I+tD_x\bfw \right)^{-1}\right)^\top
    J\theta_{\zeta_1}\times J\theta_{\zeta_2}.
\end{align*} 
In turn, we obtain the scalar triple product
\begin{align}
    \mu:&=(p\times q)^\top r_1= 
    (1+t\lambda_2)(1+t\lambda_3)\left\langle 
    J\theta_{\zeta_1}\times J\theta_{\zeta_2}, 
    r_1\right\rangle. 
    \label{Eqn:muOrder}
\end{align} 
Since $\beta(\tau(\zeta),\zeta)=\zeta$,  we conclude at $t=\tau(\zeta)$, 
\begin{align}
    &\mu_0(\zeta)=(1+\tau\lambda_2)(1+\tau\lambda_3) 
    \left\langle 
    R_{0,1}(\zeta)\times R_{0,2}(\zeta), 
    r_1\right\rangle>0. 
    \label{Eqn:mu0_ineq}
\end{align} 
The sign of the triple product is known explicitly within some neighborhood of $x_0$ owing to the chosen orientation of $p\times q$. Indeed, by our parametrization, $
    p\times q = |p\times q|\bfn, 
$ where $\bfn$ is the unit normal pointing from the negative branch to the positive branch (and, thus, the higher velocity side to the lower velocity side). We further note that 
$$
    x(Z^+,\beta^+)-x(Z^-,\beta^-)=r_1(Z^+-Z^-) 
    +\O(\delta_\zeta) 
    =2\sqrt{c_1\delta_\zeta}r_1+\O(\delta_\zeta), 
$$
which implies that, for $\delta_\zeta\ll1$, $r_1$ also points from the minus branch to the plus branch. This leads to the strict positivity 
$$
    \left\langle r_2^*\times r_3^*,r_1 \right\rangle>0  \quad\text{at} \quad x_0.
$$ Thus,  the strict positivity of (\ref{Eqn:mu0_ineq}) also holds in some neighborhood of the singularity. 

We then move to the expansion of $G$. To this end, recalling (\ref{Eqn:lTvpmyt}) gives  
$$
    l_1^\top G=\frac{4\mu\omega_1^3}{\omega_3} 
    \delta_\zeta^{1/2}\left(a_1\sqrt{6\omega_3} 
    \left(\tilde\psi+\frac{5}{2}S_0 \right)
    -3\omega_1a_2\right)+\O(\delta_\zeta) 
$$
By (\ref{Eqn:G_Order}) and the definition of $F$, we have 
$
G=\O(\delta_\beta
    ^{1/2}), 
F=\O(1),
$ which combined with Eq. (\ref{Eqn:Sigma_tt}), allow us to rewrite the evolution equation  of $\sigma$, that is, 
$$
    \sigma_{tt}=\frac{F_\sigma}{\delta_\zeta}, 
$$
where 
\begin{equation*}
   F_\sigma:=\delta_\zeta\left[ 
    \frac{l_1^\top F}{1+t\lambda_1}
    -l_1^\top J_\beta \mathcal{P}^{-1} 
    L_*^\top F \right] 
    +\frac{1}{\hat\eta}\sqrt{\frac{\delta_\zeta}
    {\delta_\beta}}\left[
    \frac{l_1^\top G_0}{1+t\lambda_1} 
    -l_1^\top J_\beta \mathcal{P}^{-1} 
    L_*^\top G_0 
    \right],
    \quad G_0:=\sqrt{\delta_\beta}G.
\end{equation*}

Furthermore, by introducing 

$$
    F_\beta:=\mathcal{P}^{-1}L_*^\top F,\quad 
    G_\beta:=\frac{1}{\hat\eta}\sqrt{\frac{t-\tau(\zeta)}
    {t-\tau(\beta)}}\mathcal{P}^{-1}L_*^\top G_0, 
$$
we have the equation for $Z$,
$$
    Z_t=\beta_{tt}=F_\beta+\frac{G_\beta}{\delta_\zeta}. 
$$
Thus, we obtain the following first order system of equations,  
\begin{align} 
    \left\{
    \begin{aligned}
        &\hat\eta_t=\frac{H_2(t,\zeta,\hat\theta,
        \theta_t,\theta_\zeta)}{t-\tau(\zeta)} , 
        \\
        &\hat\sigma_t=\frac{S-\hat\sigma}{t-\tau(\zeta)}, 
        \\
        &\hat\beta_t=\frac{Z-\hat\beta}
        {t-\tau(\zeta)} ,
        \\
        &S_t=\frac{F_\sigma(t,\zeta,\hat\theta,
        \theta_t, \theta_\zeta)}
        {t-\tau(\zeta)} ,
        \\
        &Z_t=F_\beta+\frac{G_\beta}{t-\tau(\zeta)}.
    \end{aligned}
    \right. \label{Eqn:FirstOrderSystem}
\end{align}
By the previous discussions, we know that at $t=\tau(\zeta)$,
\begin{align}
    &\eta(\tau(\zeta),\zeta)
    =\sigma(\tau(\zeta),\zeta)=0 ,
    \\
    &\beta(\tau(\zeta),\zeta)=\zeta ,
    \label{Eqn:BetaBoundary} 
    \\
    &\bfv(\tau(\zeta),\zeta)=\bfw
    (\gamma^*(\zeta)) , 
    \\
    &Z(\tau(\zeta),\zeta)=0 . 
\end{align} 
Thus, it remains to determine $\hat\eta(\tau(\zeta),\zeta),
S(\tau(\zeta),\zeta)$. As in \cite{BCH2}, these will be uniquely determined by requiring  that $H_2(t,\zeta,
\hat\theta,\theta_t,\theta_\zeta), F_\sigma(t,\zeta,\hat\theta,
\theta_t,\theta_\zeta)$ vanish at $t=\tau(\zeta)$.

\subsection{Determining Boundary Conditions}

We first define boundary data, 
$$
    \hat\eta(\tau(\zeta),\zeta)=\rho_0(\zeta),\quad
 \hat\sigma(\tau(\zeta),\zeta) 
    =S(\tau(\zeta),\zeta)=S_0(\zeta),
    \quad \hat\beta(\tau(\zeta),\zeta) 
    =Z(\tau(\zeta),\zeta)=Z_0(\zeta) .
$$
As demonstrated in Eq. (\ref{Eqn:ZBoundaryCond}), $Z$ vanishes along the boundary $t=\tau(\zeta)$. Based on this, we are able to  determine $\rho_0(\zeta)$. We first observe that at $t=\tau(\zeta)$, $H_2=0$ implies $\rho_0(\zeta)=2H_1$. Furthermore, since 
$
    \beta_t(\tau(\zeta),\zeta)=Z_0(\zeta)=0,$
we obtain 
$
    J\theta_t=r_1S_0,  
$
along the boundary. Therefore, 
\begin{equation}
    \left.(I+\tau D_x\bfw)J\theta_t\right|
    _{t=\tau(\zeta)}=0. 
    \label{Eqn:a2QTheta_t}
\end{equation}
By  vanishing of $\hat A_1$ at $t=\tau(\zeta)$, we obtain 
\begin{align}
    \rho_0(\zeta)&=-4a_1A_2^\top p\times q 
    \nonumber\\
    &=4a_1\omega_1(\zeta)^2\sqrt{\frac{6}{\omega_3(\zeta)}}r_1^\top(\zeta) p\times q 
    \nonumber\\
    &=4a_1\mu_0\omega_1(\zeta)^2\sqrt{\frac{6}{\omega_3(\zeta)}} 
    > 0 . 
    \label{Eqn:rho0}
\end{align} 

\begin{remark} Under the planar reduction, this reduces precisely to the two-dimensional formula, as shown in (5.36) of \cite{BCH2}. \end{remark} 

We finally seek to determine $S_0(\zeta)$. Due to our imposition that $F_\sigma$ vanishes along the boundary, we have 
\begin{align}
    &\left.F_\sigma\right|_{\delta_\zeta=0}=
    \left.\left[\delta_\zeta\left( 
    \frac{l_1^\top F}{1+t\lambda_1}-l_1^\top 
    J_\beta\P^{-1}L_*^\top F
    \right) 
    +\frac{1}{\hat\eta}\sqrt{\frac{\delta_\zeta}
    {\delta_\beta}}\left( 
    \frac{l_1^\top G_0}{1+t\lambda_1} 
    -l_1^\top J_\beta \P^{-1}L_*^\top 
    G_0 
    \right)\right]\right|_{\delta_\zeta=0}=0,  
    \label{Eqn:FsigmaComp}
\end{align} 
where our aim (as in 2D case) is to demonstrate that (\ref{Eqn:FsigmaComp}) allows us to recover a unique value of $S_0(\zeta)$. To this end, we need to examine each term separately. 

First, recalling  
$F=\O(1), G_0=\O\left(\delta_\zeta\right)$, 
the following two terms vanish in the limit $t\rightarrow\tau(\zeta)$. 
$$
    \delta_\zeta l_1^\top J_\beta \P^{-1} 
    L_*^\top F \rightarrow 0, 
    \quad
    \frac{1}{\hat\eta}\sqrt{\frac{\delta_\zeta}
    {\delta_\beta}}l_1^\top J_\beta \P^{-1} 
    L_*^\top G_0 \rightarrow 0. 
$$
We now examine the first term in \eqref{Eqn:FsigmaComp}. Recall  
$$
    F=-(I+tD_x\bfw)\Lambda_J(\theta_t) 
    -2D_x\bfw J\theta_t-tD^2_x\bfw 
    [J\theta_t,J\theta_t], 
$$ 
and
$$
    J\theta_t=S_0r_1+\O(\delta_\zeta) 
    \quad
    1+t\lambda_1=-\omega_1\delta_\zeta 
    +\O(\delta_\zeta^2). 
$$ 
We observe that 
$$
    l_1^\top(I+tD_x\bfw)=(1+t\lambda_1)l_1^\top, 
$$
and as $t$ tends to $\tau(\zeta)$, 
\begin{align*}
    \delta_\zeta\frac{l_1^\top(-(I+tD_x\bfw)
    \Lambda_J)}{1+t\lambda_1}=-\delta_\zeta 
    l_1^\top \Lambda_J \longrightarrow 0. 
\end{align*} 

Furthermore, noting that 
\begin{align*}
    l_1^\top D_x\bfw J\theta_t 
    =\lambda_1l_1^\top J\theta_t 
    =\lambda_1S_0+\O(\delta_\zeta), 
\end{align*} 
we obtain the limit 
\begin{align*}
    \delta_\zeta\frac{-2l_1^\top D_x\bfw J\theta_t}
    {1+t\lambda_1}\longrightarrow -2S_0,\quad t\rightarrow \tau(\zeta). 
\end{align*} 

We finally claim that the quadratic part vanishes in \eqref{Eqn:FsigmaComp}, as in \cite{BCH2}. First, observe that along the boundary, 
$J\theta_t\rightarrow S_0r_1 
$,
which means we only need to show 
\begin{align*}
    l_1^\top D_x^2\bfw[r_1,r_1] 
\end{align*} 
vanishes in the limit as $\delta_\zeta\rightarrow0$. To this end, differentiating  both sides of 
$$
    D_x\bfw(x) r_1(x) =\lambda_1(x)r_1(x)
$$
in the direction of $r_1$ yields  
$$
    (D_{r_1}D_x\bfw(x))r_1(x)+D_x\bfw(x) 
    (D_{r_1}r_1(x))=(D_{r_1}\lambda_1(x))r_1(x) 
    +\lambda_1(x)D_{r_1}r_1(x). 
$$
Then, noting  $D_x^2\bfw[r_1,r_1]=(D_{r_1}D_x\bfw(x))r_1(x)$, we have 
\begin{align*}
    D_x^2\bfw(x)[r_1(x),r_1(x)]+D_x\bfw(x) 
    (D_{r_1}r_1(x))=(D_{r_1}\lambda_1(x))r_1(x) 
    +\lambda_1(x)D_{r_1}r_1(x). 
\end{align*} 
Multiplying both sides by $l_1^\top$ gives 
\begin{align*}
    l_1^\top(x)D_x^2\bfw(x)[r_1(x),r_1(x)]+\lambda_1 
    l_1^\top(x)(D_{r_1}r_1(x))
    =D_{r_1}\lambda_1(x) 
    +\lambda_1(x)l_1^\top(x)D_{r_1}r_1(x). 
\end{align*} 
which follows that
\begin{align*}
    l_1^\top(x)D_x^2\bfw(x)[r_1(x),r_1(x)]
    =D_{r_1}\lambda_1(x). 
\end{align*} 

Noting the fact that our singular surface $\Gamma^*$ is defined according to the vanishing of 
$
    D_{r_1}\lambda_1(x)=(\nabla\lambda_1(x))\cdot 
    r_1(x), 
$
we can obtain the desired limit, 
\begin{align*}
    l_1^\top(x)D_x^2\bfw(x)[r_1(x),r_1(x)]=0. 
\end{align*} 
This gives the total contribution from $F$ as 
\begin{align*}
    \delta_\zeta\frac{l_1^\top F}
    {1+t\lambda_1}\rightarrow -2S_0, 
\end{align*}
and allows us to recover the following  equation in $S_0(\zeta)$ for $F_\sigma$. 
\begin{align}
    \lim_{t\rightarrow\tau(\zeta)}
    F_\sigma=-2S_0(\zeta)
    +\lim_{t\rightarrow\tau(\zeta)}
    \frac{l_1^\top G_0}
    {\rho_0(\zeta)(1+t\lambda_1)} = 0 . 
    \label{Eqn:Fsigma_int}
\end{align} 

It remains to show that the coefficient for $S_0$ remains nonzero in the limit as $\delta_\zeta\rightarrow0$ and thus determines a unique initial value for $S_0$. Therefore, we now examine the second term in the summation \eqref{Eqn:Fsigma_int}. 

Recall the form of $G_0$ and we have   
$$
    l_1^\top G_0=\frac{4\mu_0\omega_1^3}{\omega_3} 
    \delta_\zeta \left( 
    a_1\sqrt{6\omega_3}\left( 
    \tilde\psi+\frac{5}{2}S_0
    \right)-3\omega_1a_2
    \right)+\O(\delta_\zeta^{3/2}). 
$$
By the expansion of $1/\lambda_1$ given in (\ref{Eqn:Tau_Taylor_Exp}) and the computation of $\rho_0$ in (\ref{Eqn:rho0}), it follows that 
$$
    \frac{l_1^\top G_0}{\rho_0(1+t\lambda_1)} 
    \rightarrow 
    -\left(\tilde\psi+\frac{5}{2}S_0\right) 
    +\frac{a_2}{2a_1}\sqrt{c_1(\zeta)}, 
$$
where $c_1$ is defined in (\ref{Eqn:c_1}). We  claim that when $t\rightarrow\tau(\zeta)$, 
$$
    \frac{a_2}{2a_1}\sqrt{c_1(\zeta)} 
    =\frac{3}{4}S_0+\mathcal{C}, 
$$
where $\mathcal{C}$ does not depend on $S_0$. Indeed, the coefficients $a_1,a_2$, as defined in Equation (\ref{Eqn:rhopmForm}), may be determined through evaluating Equation (\ref{Eqn:densityTransport}) at $X^\pm$. Note that the expansion of $\lambda_1$ is given by  
$$
    \lambda_1(X^\pm)-\lambda_1(\beta) 
    =\frac{1}{2}\omega_3(Z^\pm)^2 
    +\frac{1}{6}\omega_4(Z^\pm)^3 
    +O(1)(Z^\pm)^4. 
$$
Then,
combining this with (\ref{Eqn:ZpmFinal}) and (\ref{Eqn:phiBetaExp}) gives 
\begin{equation}
    \lambda_1(X^\pm)-\lambda_1(\beta) 
    =\delta_\beta\left(\frac{1}{2}\omega_3c_1(\beta) 
    \pm\sqrt{\delta_\beta}L\right)
    +\O(\delta_\beta^{2}), 
    \label{Eqn:Lambda1Zpm}
    \end{equation}
    where $$
    L:=-\frac{1}{2}\omega_3\sqrt{c_1(\beta)}\left( 
    c_2(\tilde\sigma,\beta)+c_1(\beta)c_3(\beta)
    \right)+\frac{1}{6}\omega_4c_1(\beta)^\frac{3}{2} .
$$ 
For convenience of future discussions, by the definitions of $c_1(\beta),c_2(\tilde\sigma,\beta)$ and  $c_3(\beta)$ in Equations (\ref{Eqn:c_1})--(\ref{Eqn:c_3}), we sometimes decompose $L$ into two terms
$$
    L(\beta,\tilde\sigma)=-\frac{1}{2}\omega_3\sqrt{c_1(\beta)}\tilde\sigma 
    +L_1(\beta).
$$
Combining this with (\ref{Eqn:1PlusTLambda_1}) gives 
$$
    1+t\lambda_1(X^\pm)= 
    \frac{2\delta_\beta}{\tau(\beta)} 
    \left(1\pm\frac{1}{2}\tau(\beta)^2
    \sqrt{t-\tau(\beta)}L 
    +\O(t-\tau(\beta))
    \right), 
$$
and
$$
    \frac{1}{1+t\lambda_1(X^\pm)} 
    =\frac{\tau(\beta)}{2(t-\tau(\beta))} 
    \left(1\pm f_0\sqrt{t-\tau(\beta)} 
    +\O(\delta_\beta)\right), 
$$
where $f_0$ is analytic and at $t=\tau(\beta)$, it satisfies
\begin{align*}
    f_0=-\frac{1}{2}\tau(\beta)^2L. 
\end{align*} 
 Then, by noting (\ref{Eqn:rhoBarOverLambda23}), we obtain the following formulas for $a_1,a_2$ defined in (\ref{Eqn:rhopmForm}), as $\delta_\zeta\rightarrow0$,  
$$
    a_1(\tau(\beta),\tilde\sigma,\beta)
    =\frac{\tau(\beta)}{2}\theta_1 ,
    \quad
    a_2(\tau(\beta),\tilde\sigma,\beta)
    =\frac{\tau(\beta)}{2}(\theta_1f_0+\theta_2) .
$$
Therefore, letting $\delta_\zeta\rightarrow0$ gives the claimed identity
\begin{align*}
    \frac{a_2}{2a_1}\sqrt{c_1(\beta)} 
    &=\frac{1}{2}\sqrt{c_1(\zeta)}\left(f_0
    +\frac{\theta_2}{\theta_1}\right) 
    \nonumber\\
    &=-\frac{\tau(\zeta)^2}{4}\sqrt{c_1(\zeta)}L
    +\frac{1}{2}\sqrt{c_1(\zeta)} 
    \frac{\theta_2}{\theta_1} 
    \nonumber\\
    &=\frac{3}{4}S_0-\frac{\tau(\zeta)^2}{4} 
    \sqrt{c_1(\zeta)}L_1(\zeta) 
    +\frac{1}{2}\sqrt{c_1(\zeta)}
    \frac{\theta_2}{\theta_1} 
    \nonumber\\
    :&=\frac{3}{4}S_0+L_2(\zeta), 
\end{align*}
where, owing to the fact that at the boundary, $\theta_1,\theta_2$ only depend on $\beta$, we observe  that $L_2(\zeta)$ does not depend on $S_0$. Thus,  $S_0$ is determined uniquely via 
$$
    S_0(\zeta)=\frac{4}{15}\left(L_2(\zeta) 
    -\tilde\psi(\zeta)\right).
$$

\subsection{Evolution in Time of the Singular Surface} 

With our initial conditions determined, we now seek a solution of the form $(\zeta,t)\rightarrow(\eta,\theta)$. Define the following vector as the solution to the first order system of equations (\ref{Eqn:FirstOrderSystem}). 
\begin{align}
    \mathbf{u}:= \begin{bmatrix}
        \hat\eta\\\hat\sigma\\\hat\beta_1 
        \\\hat\beta_2\\S\\Z_1\\Z_2 
    \end{bmatrix} \in\mathbb{R}^7 . 
    \label{Eqn:SolutionVector}
\end{align} 

Furthermore, we introduce a further renaming of variables,
$
    \tilde t := t-\tau(\zeta),
    \tilde\bfu(\tilde t,\zeta):=
    \bfu(\tilde t+\tau(\zeta),\zeta). 
$
By the chain rule, we have 
$$
    \partial_{\tilde t}\tilde\bfu
    (\tilde t,\zeta)
    =\partial_t\bfu(\tilde t+\tau(\zeta), 
    \zeta), \quad
\partial_{\zeta_i}\tilde \bfu(\tilde t, 
    \zeta)=
    \partial_{\zeta_i}\bfu(\tilde t+\tau(\zeta), 
    \zeta)
    +\tau_{\zeta_i}(\zeta)\partial_t 
    \bfu(\tilde t+\tau(\zeta),\zeta). 
$$
In addition, by noting  that 
$$\sigma=\tilde t\hat\sigma, 
    \quad \beta_j=\zeta_j+\tilde t\hat\beta_j,
$$
we have 
\begin{align}
    &\partial_{\zeta_i}\sigma= 
    \tilde t\partial_{\zeta_i}\hat\sigma 
    -\tau_{\zeta_i}(\zeta)S 
    \label{Eqn:sigma_zetai}\\
    &\partial_{\zeta_i}\beta_j= 
    \delta_{ij}+
    \tilde t\partial_{\zeta_i}\hat\beta_j 
    -\tau_{\zeta_i}(\zeta)Z_j. 
    \label{Eqn:betaj_zetai}
\end{align}
As a consequence,  $\tilde\bfu$ evolves in time according to  the following system of equations 
$$
    \tilde\bfu_{\tilde t}=\frac{1}{\tilde t} 
    \Omega(\tilde t,\zeta,\tilde\bfu, 
    \tilde\bfu_\zeta)+\Upsilon(\tilde t,
    \zeta,\tilde\bfu, \tilde\bfu_\zeta), 
$$
where 
\begin{align*}
    \Omega:= \begin{bmatrix}
    H_2
    \\S-\hat\sigma
    \\Z_1-\hat\beta_1
    \\Z_2-\hat\beta_2
    \\F_\sigma
    \\G_{\beta_1}
    \\G_{\beta_2}
    \end{bmatrix}, \qquad 
    \Upsilon:=\begin{bmatrix}
        0\\0\\0\\0\\0
        \\F_{\beta_1}\\F_{\beta_2} 
    \end{bmatrix}, 
\end{align*} 
and the initial data are given by  
\begin{align*}
    \tilde\bfu(0,\zeta):=\tilde\bfu_0(\zeta 
    =\begin{bmatrix}
        \rho_0(\zeta)\\S_0(\zeta)\\0\\0
        \\S_0(\zeta)\\0\\0 
    \end{bmatrix}.
\end{align*} 

Recalling at $\tilde{t}=0$, 
$$
    H_2=0,\qquad S-\hat\sigma=0, 
    \qquad Z-\hat\beta=0, \qquad 
    F_\sigma=0, \qquad G_0=0, 
$$
and noting that 
 $G_0=0$ implies $G_\beta=0,$
we obtain that at $\tilde{t}=0$,  $\Omega$ satisfies 
$$
    \Omega(0,\zeta,\tilde\bfu_0(\zeta), 
    \tilde\bfu_{0,\zeta}(\zeta))=0 
$$ 
for all $\zeta$.

 Then, we introduce the new unknows  $\vartheta:=\tilde\bfu-\tilde\bfu_0(\zeta)$ and arrive at the new system of equations   
\begin{align}
    &\vartheta_{\tilde t}=\frac{1}{\tilde t} 
    \mathbf{F}(\tilde t,\zeta,\vartheta, 
    \vartheta_\zeta)+\mathbf{G}(\tilde 
    t,\zeta,\vartheta,\vartheta_\zeta),\label{Eqn:vartheta}
    \\   
    &\vartheta(0,\zeta)=0,\label{Eqn:varthetaData}  
\end{align} 
where $\mathbf{F},\mathbf{G}$ are  analytic functions satisfing  $\mathbf{F}(0,\zeta,0,0)=0$. Equivalently, Eq. (\ref{Eqn:vartheta}) can be written by 
\begin{align}
    &\tilde t\vartheta_{\tilde t}= 
    \mathbf{H}(\tilde t,\zeta,\vartheta, 
    \vartheta_\zeta) , 
    \label{Eqn:BriotBouquet}
\end{align} 
where $\mathbf{H}(0,\zeta,0,0)=0$. Next we want to  demonstrate that $\mathbf{H}_{\vartheta_\zeta}
(0,\zeta,0,0)=0$. Since $\tilde t=0$, this is equivalent to showing 
$
    \frac{\partial\Omega}{\partial 
    \vartheta_\zeta}=0.
$
However, by construction, 
$$
    \frac{\partial (\partial_{\zeta_i}\beta_j)}{\partial 
    (\partial_{\zeta_k}\hat\beta_m)} = 
    \tilde t \delta_{ik}\delta_{jm}=0,
    \quad
    \frac{\partial(\partial_{\zeta_i}\sigma)}
    {\partial(\partial_{\zeta_k}\hat\sigma)}=
    \tilde t\delta_{ik}=0, 
$$
at $\tilde t=0$. Thus, 
$
    \mathbf{H}_{\vartheta_\zeta}
    (0,\zeta,0,0)=0. 
$

Therefore, we conclude that the system in Equation (\ref{Eqn:BriotBouquet}) is of Briot-Bouquet type. Following the results of \cite{li}, it remains to show that none of the eigenvalues of $\Xi:=
\frac{\partial\mathbf{H}}
{\partial\vartheta}(0,\zeta,0,0)$ are positive integers at $\tilde t=0$. Based on  the calculations in Appendix \ref{App:Partials}, this matrix takes the following form, 
\begin{align} 
    \Xi = 
    \begin{bmatrix}
        -\frac{1}{2}&0&*&*&0&*&* 
        \\
        0&-1&0&0&1&0&0 
        \\
        0&0&-1&0&0&1&0 
        \\
        0&0&0&-1&0&0&1 
        \\
        *&-\frac{3}{4}&*&*&-3&*&*
        \\
        0&0&0&0&0&-\frac{1}{2}&0 
        \\
        0&0&0&0&0&0&-\frac{1}{2} 
    \end{bmatrix}, 
    \label{Eqn:XI_Final} 
\end{align} 
which has the following eigenvalues 
\begin{align}\label{last}
    \left\{ 
    -\frac{1}{2}, -\frac{1}{2}, -\frac{1}{2}, 
    -1, -1, -\frac{3}{2},-\frac{5}{2}
    \right\}.  
\end{align} 
Obviously, 
none of these eigenvalues is a positive integer. Hence, the results of \cite{li} yield a unique local analytic solution to the system of equations  (\ref{Eqn:BriotBouquet}) with initial data (\ref{Eqn:varthetaData}). Finally, the proof of Theorem 5.1 is completed.

\begin{appendices}

\section{Algebraic Simplification of Terms}
\label{App:AlgebraicSimplification} 

\subsection{Computation of $H$}
\setcounter{equation}{0} 
\label{App:H_Computation} 

We compute the form of (\ref{Eqn:HDefinition}), namely, 
\begin{align}
    H=(\rho^+(\bfw-\bfv^+)-\rho^-(\bfw-\bfv^-) 
    +(\rho^+-\rho^-)(I+tD_x\mathbf{w})J\theta_t)^\top 
    p\times q. 
    \label{Eqn:H_Appendix_Unsimplified}
\end{align} 
Firstly, we compute the difference between $\bfv^\pm$ and $\bfw$. By using Theorem \ref{Theorem:vpm} and Eq. (\ref{Eqn:wExpanded}), we obtain 
\begin{align*}
    &\bfv^\pm-\bfw(0,\beta)=\delta_\beta A_1 
    \pm \sqrt{\delta_\beta}A_2, 
    \\
    &\bfw(\sigma,\beta)-\bfw(0,\beta) 
    =\delta_\beta A_3, 
    \\
    &\bfv^\pm-\bfw(\sigma,\beta)=\hat A_1 
    \pm\sqrt{\delta_\beta}A_2. 
\end{align*} 

Recall the form of $\rho^\pm$
$$
    \rho^\pm=a_1\delta_\beta^{-1}\pm 
    a_2\delta_\beta^{-1/2}. 
$$
Based on this, we compute the form of (\ref{Eqn:H_Appendix_Unsimplified}) term by term.   Indeed, 
\begin{align*}
    \rho^+(\bfw-\bfv^+)&=(a_1\delta_\beta^{-1} 
    +a_2\delta_\beta^{-1/2})\left( 
    -\hat A_1-\sqrt{\delta_\beta}A_2 
    \right) 
    \nonumber\\
    &=-\delta_\beta^{-1}\left( 
    a_1\hat A_1+a_1A_2\delta_\beta^{1/2} 
    +a_2\hat A_1\delta_\beta^{1/2} 
    +a_2A_2\delta_\beta 
    \right).
\end{align*}
Then, we have
\begin{align*}
    \rho^-(\bfw-\bfv^-)&=(a_1\delta_\beta^{-1} 
    -a_2\delta_\beta^{-1/2})\left( 
    -\hat A_1+A_2\delta_\beta^{1/2} 
    \right) 
    \nonumber\\
    &=\delta_\beta^{-1}\left( 
    -a_1\hat A_1+a_1A_2\delta_\beta^{1/2} 
    +a_2\hat A_1\delta_\beta^{1/2} 
    -a_2A_2\delta_\beta 
    \right). 
\end{align*} 

Subtracting the above two formulas gives 

\begin{align*}
    \rho^+(\bfw-\bfv^+)-\rho^-(\bfw-\bfv^-)=
    -2\delta_\beta^{-1/2}\left( 
    a_1A_2+a_2\hat A_1
    \right). 
\end{align*} 

Further noting that 
\begin{align*}
    \rho^+-\rho^-=2a_2\delta_\beta^{-1/2} 
\end{align*} 
yields the desired form for (\ref{Eqn:H_Appendix_Unsimplified}), that is,  
\begin{align}
    H=2\delta_\beta^{-1/2}\left(
    -a_1A_2-a_2\hat A_1 
    +a_2(I+tD_x\bfw)J\theta_t
    \right)^\top p\times q. 
    \label{Eqn:H_simp_Appendix}
\end{align}

\subsection{Computation of $G$}
\label{App:G_Computation} 

We compute the leading-order behavior of function   
$$
    G=\rho^+\left((y_t
    -\bfv^+)^\top p\times q\right)(\bfv^+-y_t)
    -\rho^-\left((y_t-\bfv^-)^\top p\times q\right)
    (\bfv^--y_t). 
$$

By (\ref{Eqn:vpmMinusW}), we have 
$$
    \bfv^\pm-\bfw=\lambda_1 r_1(Z^\pm-\sigma)+\frac{1}{2} 
    D_x\bfw \partial_\sigma r_1 (Z^\pm-\sigma)^2 
    +\frac{1}{2}D_x^2\bfw [r_1,r_1] 
    (Z^\pm-\sigma)^2+\O(\delta_\zeta^{3/2}).$$

Furthermore, we recall that 
\begin{align*}
    &Z^\pm-\sigma=\pm\sqrt{c_1(\beta)}
    \delta_\beta^{1/2}+\left(-\frac{3}{2}S_0+b^*(\zeta) 
    \right)\delta_\beta+\O(\delta_\beta^{3/2}), 
    \\
    &(Z^\pm-\sigma)^2=c_1\delta_\zeta
    +\O(\delta_\zeta^{3/2}). 
\end{align*} 

Using this and the expansion of $\lambda_1$ in $\omega_k$, we obtain 
\begin{align*}
    \bfv^\pm-\bfw=&\mp\omega_1 r_1
    \sqrt{c_1(\beta)}\delta_\zeta^{1/2}
    +\Bigg(\omega_1\left(\frac{3}{2}S_0-b^*(\zeta) 
    \right)r_1 
    \nonumber\\
    &+\frac{c_1}{2}\left(D_x\bfw \partial_\sigma 
    r_1+D_x^2\bfw [r_1,r_1] 
    \right)\Bigg)\delta_\zeta
    +\O(\delta_\zeta^{3/2}).
\end{align*} 

Furthermore, Eq. (\ref{Eqn:B_Comp}) implies that 
$$
    y_t-\bfw=\left(-\omega_1 S_0 r_1+(I+\tau(\zeta)D_x\bfw)R_0Z_1\right)
    \delta_\zeta 
    +\O(\delta_\zeta^2). 
$$
Subtracting the above two equations gives 
\begin{align*}
    &\bfv^\pm-y_t=\mp\omega_1r_1\sqrt{c_1(\beta)} 
    \delta_\zeta^{1/2}+\Bigg(\omega_1
    \left(\frac{5}{2}S_0-b^*(\zeta) 
    \right)r_1 
    \nonumber\\
    &+\frac{c_1}{2}\left(D_x\bfw \partial_\sigma 
    r_1+D_x^2\bfw [r_1,r_1] 
    \right)
    -(I+\tau(\zeta)D_x\bfw)R_0Z_1
    \Bigg)\delta_\zeta 
    +\O(\delta_\zeta^{3/2}), 
\end{align*} 
which implies that 
\begin{align*}
    \bfv^\pm-y_t=\mp\omega_1^2\sqrt{
    \frac{6}{\omega_3}}r_1\delta_\zeta^{1/2}
    +\O(\delta_\zeta),
\end{align*} Here we used the identity 
$
	c_1=\frac{6\omega_1^2}{\omega_3}. 
$

We then rearrange the terms within $G$ in the following form  
\begin{align*}
    G=-\rho^+\left((\bfv^+-y_t)^\top 
    p\times q\right)(\bfv^+-y_t)
    +\rho^-\left((\bfv^--y_t)^\top p\times q\right)
    (\bfv^--y_t), 
\end{align*} 
and expand each part separately. 
Since
\begin{align*}
    (\bfv^+-y_t)^\top p\times q=
    -\mu\omega_1^2\sqrt{\frac{6}{\omega_3}} 
    \delta_\zeta^{1/2}+\O(\delta_\zeta),
\end{align*}
we have 
\begin{align*}
    \left((\bfv^+-y_t)^\top 
    p\times q\right)(\bfv^+-y_t)= 
    \frac{6\omega_1^4}{\omega_3}\mu r_1
    \delta_\zeta+\O(\delta_\zeta^{3/2}). 
\end{align*} 

A similar computation also holds for the second term
\begin{align*}
    \left((\bfv^--y_t)^\top 
    p\times q\right)(\bfv^--y_t)= 
    \frac{6\omega_1^4}{\omega_3}\mu r_1
    \delta_\zeta+\O(\delta_\zeta^{3/2}). 
\end{align*} 
Inserting this into our equation for $G$ yields 
\begin{equation}
    G=(\rho^--\rho^+)\frac{6\omega_1^4}{\omega_3}\mu r_1
    \delta_\zeta+\O((|\rho^-|+|\rho^+|)
    \delta_\zeta^{3/2}) =\O(\delta_\zeta^{1/2}). 
    \label{Eqn:G_Order}
\end{equation}

\section{Computation of Partial Derivatives} 
\label{App:Partials} 

In this section, we compute the partial derivatives for entries of $\Xi$. Throughout this section, we make use of the fact that all transformed coefficient functions are analytic functions of $(\tilde t, \zeta,\tilde\bfu,\tilde\bfu_\zeta)$. Therefore, their derivatives along the boundary may be equivalently computed after taking their restriction to $\tilde t=0$.

\subsection{Row 1 Derivatives} 
\setcounter{equation}{0} 
\label{App:H_Derivatives} 

We seek to compute the derivatives of 
$$
    H_2=H_1-\frac{1}{2}\hat\eta, 
\quad
    H_1:=2\sqrt{\frac{\delta_\zeta}{\delta_\beta}} 
    \left(-a_1A_2-a_2\hat A_1+a_2B\right)^\top 
    p\times q.
$$
Next we shall compute the derivatives of each term individually. 

\subsubsection{Term 1}

Note that 
$$
    \delta_\beta=t-\tau(\beta) =\tilde t+\tau(\zeta) 
    -\tau(\zeta+\tilde t\hat\beta)=\tilde t\left(1-
    \nabla_\zeta\tau(\zeta)\cdot\hat\beta 
    \right)+\O(\tilde t^2). 
$$
Therefore, 
$$
    \sqrt{\frac{\delta_\zeta}{\delta_\beta}}
    =(1-\nabla_\zeta\tau(\zeta)
    \cdot\hat\beta)^{-1/2}+\O(\tilde t).  
$$
Since at $\tilde t=0$,  $\hat\beta=0$. We then have 
$
    \sqrt{\frac{\delta_\zeta}{\delta_\beta}}
    =1. 
$

Using this, it follows from the chain rule that 
$$
    \left.\frac{\partial}{\partial\hat\beta_j} 
    \sqrt{\frac{\delta_\zeta}{\delta_\beta}}
    \right|_{\tilde t=0}=\frac{1}{2}\tau_{\zeta_j}
    (\zeta). 
$$
Furthermore, since there is no dependence on other variables, we obtain that  
\begin{align*}
    \left.\frac{\partial}{\partial\hat\eta} 
    \sqrt{\frac{\delta_\zeta}{\delta_\beta}}
    \right|_{\tilde t=0}=\left.\frac{\partial}
    {\partial\hat\sigma}\sqrt{\frac{\delta_\zeta}
    {\delta_\beta}}\right|_{\tilde t=0}
    =\left.\frac{\partial}{\partial S}
    \sqrt{\frac{\delta_\zeta}{\delta_\beta}}
    \right|_{\tilde t=0} 
    =\left.\frac{\partial}{\partial Z_j}
    \sqrt{\frac{\delta_\zeta}{\delta_\beta}}
    \right|_{\tilde t=0}=0. 
\end{align*} 

\subsubsection{Term 2} 

At $\tilde t=0$, the following identities hold. 
$$(I+\tau(\zeta)D_x\bfw)r_1=0,  \quad
    J=[r_1,R_0], 
    \quad
   J\theta_t=r_1S+R_0Z.  
$$
Therefore, at $\tilde t=0$
$$
    B=(I+\tau(\zeta)D_x\bfw)J\theta_t, 
     =(I+\tau(\zeta)D_x\bfw)R_0Z. 
$$
Due to the vanishing of $Z$ along the boundary and the lack of dependence on $S$, we conclude that 
\begin{align}
    \left.B\right|_{\tilde t=0}=0, 
    \qquad 
    \left.\frac{\partial B}{\partial S}
    \right|_{\tilde t=0}=0, 
    \qquad 
    \frac{\partial B}{\partial Z}= 
    (I+\tau(\zeta)D_x\bfw)R_0. 
    \label{Eqn:B_bdry}
\end{align} 

Furthermore, from Eqs. (\ref{Eqn:sigma_zetai})--(\ref{Eqn:betaj_zetai}), at $\tilde t=0$ we have 
$$
    \sigma_{\zeta_i}=-\tau_{\zeta_i}S, 
    \quad
\beta_{\zeta_i}=e_i-\tau_{\zeta_i}Z
$$
By using these, we compute 
\begin{align*}
    p&=(I+\tau(\zeta)D_x\bfw)J\theta_{\zeta_1} 
    \nonumber\\
    &=(I+\tau(\zeta)D_x\bfw)(-\tau_{\zeta_1}(\zeta) 
    Sr_1+R_0(e_1-\tau_{\zeta_1}(\zeta)Z)), 
    \\
    q&=(I+\tau(\zeta)D_x\bfw)J\theta_{\zeta_2} 
    \nonumber\\
    &=(I+\tau(\zeta)D_x\bfw)(-\tau_{\zeta_2}(\zeta) 
    Sr_1+R_0(e_2-\tau_{\zeta_2}(\zeta)Z)). 
\end{align*} 
Therefore,  at $\tilde t=0$ we obtain 
\begin{align}
    &p=(I+\tau(\zeta)D_x\bfw)R_0e_1, 
    \label{Eqn:pComputed}\\
    &q=(I+\tau(\zeta)D_x\bfw)R_0e_2, 
    \label{Eqn:qComputed}\\
    &\left.\frac{\partial (p\times q)}
    {\partial S}\right|_{\tilde t=0} 
    =0, 
    \label{Eqn:pqS}
    \\
    &\left.\frac{\partial (p\times q)}
    {\partial\hat\sigma}\right|_{\tilde t=0}=0. 
    \label{Eqn:pqSigmaHat}
\end{align} 
Furthermore, 
\begin{align*}
    &\frac{\partial p}{\partial Z}=-\tau_{\zeta_1}
    (\zeta)(I+\tau(\zeta)D_x\bfw)R_0, 
    \\
    &\frac{\partial q}{\partial Z}=-\tau_{\zeta_2}
    (\zeta)(I+\tau(\zeta)D_x\bfw)R_0. 
\end{align*} 

Using this, we can compute 
\begin{align*}
    \frac{\partial (p\times q)}{\partial Z_j} 
    &=(-\tau_{\zeta_1}(I+\tau(\zeta)D_x\bfw)R_0e_j)
    \times ((I+\tau(\zeta)D_x\bfw)R_0e_2)
    \nonumber\\
    &+((I+\tau(\zeta)D_x\bfw)R_0e_1)
    \times(-\tau_{\zeta_2} 
    (I+\tau(\zeta)D_x\bfw)R_0e_j). 
\end{align*} 
For each $j=1,2$, one of these cross products vanishes. Therefore, 
$$
    \left.\frac{\partial (p\times q)}{\partial Z_j}
    \right|_{\tilde t=0}=
    \left.-\tau_{\zeta_j}(\zeta)(p\times q) 
    \right|_{\tilde t=0}. 
$$

\subsubsection{Term 3} 

At $\tilde t=0$, we note that 
$
    a_2\hat A_1=a_2B=0. 
$
Furthermore, at the boundary
$H_2=0$ implies $H_1=\frac{1}{2}\rho_0. 
$
Therefore, 
$$
    \left.-a_1A_2^\top(p\times q)
    \right|_{\tilde t=0}=\frac{1}{4}\rho_0. 
$$
Next we compute 
$$
    \left.\frac{\partial}{\partial Z} 
    \left(-a_1A_2-a_2\hat A_1+a_2B\right)
    \right|_{\tilde t=0} 
    =a_2(I+\tau(\zeta)D_x\bfw)R_0. 
$$
From (\ref{Eqn:pComputed})--(\ref{Eqn:qComputed}), we derive that 
$$
    (p\times q)^\top\left.\left(\frac{\partial}{\partial Z} 
    \left(-a_1A_2-a_2\hat A_1+a_2B\right)
    \right) 
    \right|_{\tilde t=0}=0. 
$$

It remains to compute their derivative with respect to $\hat\sigma$. First, note that (\ref{Eqn:A2Bdry})--(\ref{Eqn:a1Bdry}) implies 
\begin{align}
    \left.\frac{\partial a_1}{\partial\hat\sigma}
    \right|_{\tilde t=0}=\left.\frac{\partial A_2}
    {\partial\hat\sigma}\right|_{\tilde t=0}=
    \left.\frac{\partial (a_1A_2)}
    {\partial\hat\sigma}\right|_{\tilde t=0}=0. 
    \label{Eqn:da1A2}
\end{align} 

Furthermore, $\hat A_1=\O(\delta_\zeta)$. Then, it follows that 
\begin{align}
    \left. \frac{\partial (a_2\hat A_1)}
    {\partial\hat\sigma}\right|_{\tilde t=0} 
    =0. 
    \label{Eqn:da2A1}
\end{align}
Finally, (\ref{Eqn:B_bdry}) implies that $B$ vanishes along the boundary. Furthermore, since $\sigma=\tilde t\hat\sigma$, we have 
\begin{align}
    \left.\frac{\partial B}{\partial\hat\sigma} 
    \right|_{\tilde t=0}=0, 
    \label{Eqn:partialSigmaHatB}
\end{align} 
which means  that 
\begin{align}
    \left.\frac{\partial (a_2B)}
    {\partial\hat\sigma}\right|_{\tilde t=0} 
    =0. 
    \label{Eqn:da2B}
\end{align} 
Using (\ref{Eqn:da1A2})--(\ref{Eqn:da2A1}) and (\ref{Eqn:da2B}), we obtain 
\begin{align}
    \left.\frac{\partial}{\partial\hat\sigma} 
    \left(-a_1A_2-a_2\hat A_1+a_2B\right)
    \right|_{\tilde t=0}=0.
    \label{Eqn:d2Sigmahat}
\end{align}

\subsubsection{Derivative with respect to $\hat\eta$ and $\hat\sigma$} 

Since $H_1$ does not depend on $\hat\eta$, we obtain 
$$
    \left.\frac{\partial H_2}{\partial\hat\eta} 
    \right|_{\tilde t=0}=-\frac{1}{2}. 
$$ At the same time, 
by using Eqs. (\ref{Eqn:pqSigmaHat}) and (\ref{Eqn:d2Sigmahat}), we obtain 
$$
    \left.\frac{\partial H_2}{\partial\hat\sigma} 
    \right|_{\tilde t=0}=0. 
$$

\subsubsection{Derivative with respect to $S$} 

We first note that 
\begin{align}
    \left.\frac{\partial a_1}{\partial S}
    \right|_{\tilde t=0}=\left.\frac{\partial a_2}
    {\partial S}\right|_{\tilde t=0}=\left.\frac{\partial 
    A_2}{\partial S}\right|_{\tilde t=0}
    =\left.\frac{\partial \hat A_1}{\partial S}
    \right|_{\tilde t=0}=0. 
    \label{Eqn:aCoeffsPartialS}
\end{align} 

Furthermore,  we have 
\begin{align}
    &\left.\frac{\partial B}{\partial S}
    \right|_{\tilde t=0}=(I+\tau(\zeta)D_x\bfw) 
    r_1=0, 
    \label{Eqn:partialS_B}
    \\
    &\left.\frac{\partial (p\times q)}{\partial S}
    \right|_{\tilde t=0}=0. 
    \label{Eqn:partialS_pq}
\end{align} 
Thus, it follows that
$$
    \left.\frac{\partial H_2}{\partial S}
    \right|_{\tilde t=0}=0. 
$$

\subsection{Row 2-4 Derivatives}

By construction, we easily have 
\begin{align*}
    &\frac{\partial}{\partial\hat\sigma} 
    (S-\hat\sigma)=\frac{\partial}{\partial \hat\beta_1} 
    (Z_1-\hat\beta_1)=\frac{\partial}{\partial \hat\beta_2} 
    (Z_2-\hat\beta_2)=-1, 
    \\
    &\frac{\partial}{\partial S} 
    (S-\hat\sigma)=\frac{\partial}{\partial Z_1} 
    (Z_1-\hat\beta_1)=\frac{\partial}{\partial Z_2} 
    (Z_2-\hat\beta_2)=1, 
\end{align*} 
and all other relevant partial derivatives should vanish. 

\subsection{Row 5 Derivatives} 

We now seek to compute the derivatives 
$
    \frac{\partial F_\sigma}
    {\partial\hat\sigma },   
    \frac{\partial F_\sigma}{\partial S}
$
at the boundary. We first recall the form of $F_\sigma$
$$
    F_\sigma=\delta_\zeta\left( 
    \frac{l_1^\top F}{1+t\lambda_1} 
    -l_1^\top J_\beta \P^{-1} 
    L_*^\top F 
    \right) 
    +\frac{1}{\hat\eta}\sqrt{\frac{\delta_\zeta}
    {\delta_\beta}}\left( 
    \frac{l_1^\top G_0}{1+t\lambda_1} 
    -l_1^\top J_\beta \P^{-1} 
    L_*^\top G_0 
    \right),
$$
which  consists of four terms
\begin{align*}
    &\F_1:=\delta_\zeta\frac{l_1^\top F}
    {1+t\lambda_1} , 
    \quad
   \F_2:=\delta_\zeta l_1^\top J_\beta 
    \P^{-1}L_*^\top F , 
    \quad \F_3:=\frac{1}{\hat\eta}\sqrt{\frac{\delta_\zeta}
    {\delta_\beta}}\frac{l_1^\top G_0}{1+t\lambda_1},  
    \\
    &\F_4:=\frac{1}{\hat\eta}\sqrt{\frac{\delta_\zeta}
    {\delta_\beta}}l_1^\top J_\beta \P^{-1} 
    L_*^\top G_0, 
    \quad
    F_\sigma=\F_1-\F_2+\F_3-\F_4. 
\end{align*} 

\paragraph{Term 1} We first examine $\F_1$. 

Recall
\begin{align*}
    F=-(I+tD_x\bfw)\Lambda_J(\theta_t) 
    -2D_x\bfw J\theta_t 
    -tD^2_x\bfw [J\theta_t,J\theta_t]. 
\end{align*} 
Multiplying by $l_1^\top$ on both sides  gives 
\begin{align*}
    \left.l_1^\top F\right|_{Z=0}=
    -(1+t\lambda_1)l_1^\top 
    \Lambda_J(\theta_t)-2\lambda_1 S 
    -tS^2l_1^\top D^2\bfw [r_1,r_1]. 
\end{align*} 
As noted before, $l_1^\top D^2\bfw [r_1,r_1]$ vanishes at the boundary. Furthermore, we know 
$
    1+t\lambda_1=\O(\delta_\zeta).
$
Thus,
we find that 
$$
    \F_1=-2S+\O(\delta_\zeta), 
$$which 
implies  that 
$$
    \left.\frac{\partial \F_1}{\partial S}
    \right|_{\tilde t=0}=-2, \quad  \left.\frac{\partial\F_1}{\partial\hat\sigma}
    \right|_{\tilde t=0}=0. 
$$

\paragraph{Term 2} We then examine $\F_2$. 

We note that $\delta_\zeta$ does not depend on $S$ or $\hat\sigma$. Furthermore, the latter terms in $\F_2$ are bounded in the limit as $t\rightarrow\tau(\zeta)$. Therefore, 
$$
    \left.\frac{\partial \F_2}{\partial S}
    \right|_{\tilde t=0}=0, \quad  \left.\frac{\partial\F_2}{\partial\hat\sigma}
    \right|_{\tilde t=0}=0.  
$$

\paragraph{Term 3} We then examine $\F_3$. 

This term must be examined carefully because both the numerator and denominator approach zero as $t\rightarrow\tau(\zeta)$. We first note that 
$$
    Z^\pm-\sigma=\pm\sqrt{c_1}\delta_\beta^{1/2} 
    +\left(-\frac{3}{2}\hat\sigma+b^*\right)
    \delta_\beta+\O(\delta_\beta^{3/2}). 
$$
In addition, we have
$$
    l_1^\top B=-\omega_1S\delta_\zeta+\O(\delta_\zeta^2).  
$$
Then, it follows that 
$$
    l_1^\top (\bfv^\pm-y_t)=\mp\omega_1^2 
    \sqrt{\frac{6\delta_\beta}{\omega_3}} 
    +\omega_1\left(\tilde\psi+\frac{3}{2}\hat \sigma 
    +S\right)\delta_\beta +\O(\delta_\beta^{3/2}), 
$$
and
$$
    l_1^\top G=\frac{4\mu_0\omega_1^3}{\omega_3} 
    \delta_\beta^{1/2}\left( 
    a_1\sqrt{6\omega_3}\left( 
    \tilde\psi +\frac{3}{2}\hat\sigma+S
    \right)-3\omega_1a_2
    \right)+\O(\delta_\beta). 
$$
Therefore, we obtain 
\begin{align*}
    \F_3=-\left(\tilde\psi +\frac{3}{2}\hat\sigma+S
    \right)+\frac{a_2}{2a_1}\sqrt{c_1} 
    +\O(\delta_\beta). 
\end{align*} 
By utilizing our previous derivation, we further obtain 
$$
    \F_3=-S-\frac{3}{4}\hat\sigma +L_2(\zeta) 
    -\tilde\psi(\zeta)+\O(\delta_\beta). 
$$
Finally, we conclude that  
$$
    \left.\frac{\partial \F_3}{\partial S}
    \right|_{\tilde t=0}=-1, \quad
    \left.\frac{\partial\F_3}{\partial\hat\sigma}
    \right|_{\tilde t=0}=-\frac{3}{4}. 
$$

\paragraph{Term 4} We then examine $\F_4$. 

We first note that 
$
    G_0=0 
$
at the boundary. Furthermore, since  
$
    \left.\frac{\partial X}{\partial S}
    \right|_{\tilde t=0}=0, 
$
we obtain 
$
    \left.\frac{\partial G_0}{\partial S}
    \right|_{\tilde t=0}=0. 
$
Then, due to the fact that 
$
    \left.\frac{\partial\sigma}{\partial\hat\sigma}
    \right|_{\tilde t=0}=0, 
$
we obtain 
$
    \left.\frac{\partial\F_4}{\partial\hat\sigma}
    \right|_{\tilde t=0}=0 . 
$
Combining all of this gives 
$$
    \left.\frac{\partial F_\sigma}{\partial S}
    \right|_{\tilde t=0}=-2-1=-3, 
    \quad
    \left.\frac{\partial F_\sigma}{\partial\hat\sigma} 
    \right|_{\tilde t=0}=-\frac{3}{4}. 
$$

\subsection{Row 6-7 Derivatives} 

We first note that 
$
    \tilde t Z_t=G_\beta+\tilde tF_\beta. 
$
At time $\tilde t=0$, the last term vanishes. We thus can compute the derivatives of $G_\beta$.

\subsubsection{$\partial_Z G_\beta$} 

At $\tilde t=0$, $\bfu=\bfu_0$. We recall the form of $G_\beta$. 
$$
    G_\beta=\frac{1}{\hat\eta}\sqrt{\frac{\delta_\zeta}
    {\delta_\beta}}\P^{-1}L_*^\top G_0. 
$$
We seek to differentiate this with respect to $Z_1$ and evaluate it at $\tilde t=0$. We note that at the boundary, the derivative of 
$
    \frac{1}{\hat\eta}\sqrt{\frac{\delta_\zeta}
    {\delta_\beta}}\P^{-1}L_*^\top 
$
is bounded, whereas $G_0=0$. We therefore compute 
$$
    \frac{\partial G_\beta}{\partial Z_1} 
    =\frac{1}{\rho_0}\P_0^{-1}L_*^\top 
    \frac{\partial G_0}{\partial Z_1}. 
$$
Recall the form of $G$,
$$
    G:=\rho^+\left((y_t
    -\textbf{v}^+)^\top p\times q\right)(\textbf{v}^+-y_t)
    -\rho^-\left((y_t-\textbf{v}^-)^\top p\times q\right)
    (\textbf{v}^--y_t). 
$$ 

Defining 
$
    X:=B-\hat A_1, 
$
and recalling the definition of $A_2$ gives us 
$$
    G_0=-2a_2(X^\top p\times q)X
    +2a_1(X^\top p\times q)A_2 
    +2a_1(A_2^\top p\times q)X 
    -2a_2\delta_\beta(A_2^\top p\times q)A_2. 
$$
Note that the terms $\delta_\beta,a_1,a_2,A_1,A_2$ do not depend on $Z_1,Z_2$, while the term $p\times q$ does depend on $Z_1,Z_2$. However, it will be not relevant since it is multiplied by terms that vanish at $\tilde{t}=0$. Bearing in mind that all computations below should be understood along the boundary $t=\tau(\zeta)$. Hence, it holds that
$$
    \frac{\partial X}{\partial Z_i}=\frac{\partial B}
    {\partial Z_i}. \qquad i=1,2.
$$

Then,
we consider the quantity
$$
    B=(I+tD_x\bfw)(r_1S+x_{\beta_1}Z_1+x_{\beta_2}Z_2). 
$$
A direct computation gives 
$$
    \frac{\partial B}{\partial Z_i}= 
    (I+\tau(\zeta)D_x\bfw)R_{0,i}. 
$$
A similar computation also holds for $Z_2$. Therefore, 
$$
    \frac{\partial B}{\partial Z}= 
    (I+\tau(\zeta)D_x\bfw)R_{0}. 
$$

We then show that $X$ vanishes at the boundary. Indeed, at $t=\tau(\zeta)$, $Z=0$. Therefore, 
$$
    B=(I+\tau(\zeta)D_x\bfw)r_1S_0=0. 
$$
Due to this,  the derivative of the first term  vanishes, as it is quadratic. We then examine the second term. We first compute 
$
    p=y_{\zeta_1},\; q=y_{\zeta_2}
$
along the boundary. By their definitions, we have
\begin{align*}
    p=(I+\tau(\zeta)D_x\bfw)(r_1\sigma_{\zeta_1} 
    +J_{\beta}\beta_{\zeta_1}) , 
    \\
    q=(I+\tau(\zeta)D_x\bfw)(r_1\sigma_{\zeta_2} 
    +J_{\beta}\beta_{\zeta_2}) . 
\end{align*}
Furthermore, along the boundary, we differentiate 
\begin{align*}
    &\beta(\tau(\zeta),\zeta)=\zeta 
    \\
    &\beta_t\tau_{\zeta_1}+\beta_{\zeta_1}=e_1 
    \\
    &\beta_t\tau_{\zeta_2}+\beta_{\zeta_2}=e_2  
\end{align*} 
At the boundary, $\beta_t=0$. Therefore, 
$
    \beta_{\zeta_1}=e_1, 
    \beta_{\zeta_2}=e_2. 
$ 
By using this, and the fact that $r_1$ is a right eigenvector, we obtain 
\begin{align}
    &p=(I+\tau D_x\bfw)R_0e_1=\frac{\partial B}
    {\partial Z_1}, 
    \\
    &q=(I+\tau D_x\bfw)R_0e_2=\frac{\partial B}
    {\partial Z_2}. 
\end{align}

Due to the vanishing of $X$ at the boundary, we obtain 
$$
    \frac{\partial}{\partial Z_1}
    (a_1(X^\top p\times q)A_2)= 
    a_1\left(\left(\frac{\partial X}{\partial 
    Z_1}\right)^\top p\times q\right)A_2 
    =a_1(p^\top p\times q)A_2 =0 . 
$$
Similarly, 
$$
    \frac{\partial}{\partial Z_2}
    (a_1(X^\top p\times q)A_2)= 
    a_1\left(\left(\frac{\partial X}{\partial 
    Z_2}\right)^\top p\times q\right)A_2 
   =a_1(q^\top p\times q)A_2 =0. 
$$
We then notice that the derivative of the fourth term also vanishes. Indeed, by the vanishing of $\delta_\beta$, we see that 
$$
    \frac{\partial}{\partial Z_i}\left( 
    a_2\delta_\beta(A_2^\top p\times q)A_2
    \right)=0. 
$$

Thus, the only nonzero contribution comes from the third term. Its derivative is calculated as 
$$
    \frac{\partial}{\partial Z_i}(2a_1(A_2^\top 
    p\times q)X)=2a_1(A_2^\top p\times q)(I+\tau D_x\bfw)R_{0,i}.  
$$
This gives us 
\begin{align*}
    \frac{\partial G_\beta}{\partial Z_i} 
    &=\frac{2a_1(A_2^\top p\times q)}{\rho_0}
    \P_0^{-1}L_*^\top 
    (I+\tau D_x\bfw)R_{0,i} 
    \\
    &=\frac{2a_1(A_2^\top p\times q)}{-4a_1A_2^\top p\times q}
    (L_*^\top (I+\tau D_x\bfw)R_0)^{-1}L_*^\top 
    (I+\tau D_x\bfw)R_{0,i} 
    \\
    &=-\frac{1}{2}e_i 
\end{align*} 
and the computation of the lower block 
$$
    \frac{\partial G_\beta}{\partial Z} 
    =-\frac{1}{2}I_2.
$$

\subsubsection{Other Derivative Terms} 

We then claim that 
$$
    \left.\frac{\partial G_\beta}
    {\partial\hat\eta}\right|_{\tilde t=0} 
    =\left.\frac{\partial G_\beta}
    {\partial\hat\sigma}\right|_{\tilde t=0} 
    =\left.\frac{\partial G_\beta}
    {\partial\hat\beta}\right|_{\tilde t=0} 
    =\left.\frac{\partial G_\beta}
    {\partial S}\right|_{\tilde t=0}=0. 
$$
Indeed, noting that $G_0=0$ at $\tilde t=0$, this reduces to show that 
$$
    \left.\frac{\partial G_0}
    {\partial\hat\eta}\right|_{\tilde t=0} 
    =\left.\frac{\partial G_0}
    {\partial\hat\sigma}\right|_{\tilde t=0} 
    =\left.\frac{\partial G_0}
    {\partial\hat\beta}\right|_{\tilde t=0} 
    =\left.\frac{\partial G_0}
    {\partial S}\right|_{\tilde t=0}=0. 
$$
Noting the form of $G_0$, 
\begin{align*}
    G_0=-2a_2(X^\top p\times q)X
    +2a_1(X^\top p\times q)A_2 
    +2a_1(A_2^\top p\times q)X 
    -2a_2\delta_\beta(A_2^\top p\times q)A_2. 
\end{align*} 
it is immediately clear that 
$$
    \left.\frac{\partial G_0}
    {\partial\hat\eta}\right|_{\tilde t=0}=0. 
$$
Furthermore, (\ref{Eqn:aCoeffsPartialS})--(\ref{Eqn:partialS_pq}) imply that 
$$
    \left.\frac{\partial G_0}{\partial S} 
    \right|_{\tilde t=0}=0. 
$$

Note that  
$
    \tilde\sigma=\frac{\delta_\zeta}
    {\delta_\beta}\hat\sigma, 
$
we obtain the relation 
$$
    \frac{\partial\tilde\sigma}{\partial\hat\sigma} 
    =\frac{\delta_\zeta}{\delta_\beta}. 
$$
Therefore, 
$$
    \frac{\partial\hat A_1}{\partial\hat\sigma} 
    =\delta_\beta\frac{\partial}{\partial\hat\sigma} 
    (A_1-A_3) 
    =\delta_\zeta\frac{\partial}{\partial\tilde\sigma} 
    (A_1-A_3), 
$$
which is $\delta_\zeta$ times the derivative of an analytic function. Therefore, 
$$
    \left.\frac{\partial\hat A_1}{\partial\hat\sigma} 
    \right|_{\tilde t=0}=0. 
$$
Combining this with (\ref{Eqn:partialSigmaHatB}) gives 
$$
    \left.\frac{\partial X}{\partial\hat\sigma} 
    \right|_{\tilde t=0}=0, 
$$
which, in conjunction with the fact that 
$$
    X=B-\delta_\beta(A_1-A_3)=\O(\delta_\zeta), 
$$
implies 
$$
    \left.\frac{\partial G_\beta}
    {\partial\hat\sigma}\right|_{\tilde t=0}=0. 
$$

We finally compute the derivative with respect to $\hat\beta$. Note that 
$\beta=\zeta+\tilde t\hat\beta,
    \frac{\partial\beta}{\partial\hat\beta_j} 
    =\tilde t e_j. 
$
Therefore, since $B$ depends on $\beta$, instead of $\hat\beta$, its derivative vanishes, 
$$
    \left.\frac{\partial B}{\partial\hat\beta_j} 
    \right|_{\tilde t=0}=\left.\frac{\partial B}{\partial\beta_j}\tilde te_j 
    \right|_{\tilde t=0}=0. 
$$
Furthermore, we have
$$
    \frac{\partial \delta_\beta}
    {\partial\hat\beta_j}=-(\nabla_\beta\tau(\beta)) 
    \cdot \frac{\partial\beta}{\partial\hat\beta_j} =-\tilde t(\nabla_\beta\tau(\beta)) 
    \cdot e_j,  
$$
which also vanishes in the limit as $\tilde t\rightarrow0$. Thus, we have the following  results  
$$
    \left.\frac{\partial X}{\partial\hat\beta} 
    \right|_{\tilde t=0}=\left.\frac{\partial}
    {\partial\hat\beta}\left( 
    B-\hat A_1
    \right)\right|_{\tilde t=0}=0, 
$$which further
implies that 
$$
    \left.\frac{\partial G_0}{\partial\hat\beta}
    \right|_{\tilde t=0}=0.  
$$
Finally, this completes the calculations. 

\end{appendices}

\v
{\bf Acknowledgments.} 
The authors would like to thank Professor Alberto Bressan for valuable suggestions.
The work by the first and second authors was partially supported by the NSF with grants 
DMS-2306258 and DMS-2605028. The work of third author was in part supported by Zhejiang Normal University with
grant YS304222929. 

{\bf Data Availability.} Data sharing is not applicable to this article as no new data were created or analyzed in this
study

\v

{\bf Declarations
Conflict of interest.} The authors declare  no conflict of interest.

\v


\begin{thebibliography}{999}



\bibitem{A} S.~Alinhac, {\it Blowup for Nonlinear Hyperbolic Equations.}
Birkh\"auser, Boston, 1995.

\bibitem{Al}S. Alinhac,  Blowup of small data solutions for a class of quasilinear wave equations in two
space dimensions. II {\it Acta Math.} {\bf 182} (1999) 1–23.

\bibitem{Ali}S. Alinhac, Blowup of small data solutions for a quasilinear wave equation in two space
dimensions, {\it Ann. Math.} {\bf 149} (1999) 97–127.




%

\bibitem{Bloom} J.~M.~Bloom, The local structure of smooth maps of manifolds.
B.A.~thesis, Harvard 2004.
www.math.harvard.edu/theses/phd/bloom/ThesisXFinal.pdf

\bibitem{BD} S.~Bianchini and S.~Daneri, On the sticky particle solutions to the multi-dimensional pressureless Euler equations, {\it J. Differ. Equations} {\bf 368} (2023) 173-202

\bibitem{Brenier} Y. Brenier and E. Grenier, Sticky particles and scalar conservation laws [J]. SIAM Journal on Numerical Analysis, 1998, 35: 2317–2328.
%
%

 \bibitem{BC} A.~Bressan and G.~Chen,
Generic regularity of conservative solutions
to a nonlinear  wave equation,
{\it Ann. Inst. Poincar\'e Anal. Nonlin.} {\bf 34} (2017) 335--354.


 \bibitem{BCH} A.~Bressan, G.~Chen, and S.~Huang, Generic singularities of 2D
 pressureless flow. {\it Science China Math.},  {\bf68} (2025) 559-576. 

 \bibitem{BCH2} A.~Bressan, G.~Chen, and S.~Huang, The Initial Stages of a Generic Singularity
for a 2D Pressureless Gas. {\it Commun. Math. Phys.},  {\bf406} (2025). 

 \bibitem{BHY} A.~Bressan, T.~Huang, and F.~Yu,  Structurally stable singularities
for a nonlinear   wave equation. {\it Bull. Inst. Math. Acad. Sinica},
{\bf 10} (2015), 449--478.

 \bibitem{BN} A.~Bressan and T.~V.~Nguyen, Non-existence and non-uniqueness for multidimensional
 sticky particle systems.   {\it Kinetic Rel. Models} {\bf 7} (2014),
 205--218.


\bibitem{Vicol0} T. Buckmaster, T. Drivas, S. Shkoller and V. Vicol, Simultaneous development of
    shocks and cusps for 2D Euler with azimuthal symmetry from smooth data, \emph{Ann. PDE}, {\bf 8} (2022), no. 2, pp. 199.

\bibitem{Vicol1} T. Buckmaster, S. Shkoller and V. Vicol, Formation of shocks for 2d isentropic
    compressible Euler, \emph{Commun. Pure Appl. Math.}, {\bf 75} (2022), no. 9, 2069--2120.

\bibitem{BSV} T. Buckmaster, S. Shkoller and V. Vicol, Shock formation and vorticity creation for
    3d Euler, \emph{Commun. Pure Appl. Math.}, {\bf 76} (2023), no. 9, 1965--2072.

\bibitem{Vicol3} T. Buckmaster, S. Shkoller and V. Vicol, Formation of point shocks for 3D
    compressible Euler, \emph{Commun. Pure Appl. Math.}, {\bf 76} (2023), no. 9, 2073--2191.


\bibitem{CCD} H.~Cai, G.~Chen and Y.~Du,
Uniqueness and regularity of conservative solution to a wave system modeling nematic liquid crystal, {\it J. Math. Pures Appl.} {\bf  117} (2018), 185--220.

\bibitem{CCW} H. Cai, G. Chen and T. Wang, Singularity formation for radially symmetric
    expanding wave of Compressible Euler Equations, {\it SIAM J. Math. Anal.}, {\bf 55} (2023), no.
    4, 2917--2947.

\bibitem{G9} G. Chen, Optimal time-dependent lower bound on density for
classical solutions of 1-D compressible Euler equations, {\it Indiana Univ. Math. J.}, {\bf 66}
(2017), no. 3, 725--740.




\bibitem{CEHS} G. Chen, F. A. El-Katri, Y. Hu and Y. Shen, Global solution and singularity formation for the supersonic expanding wave of compressible Euler equations with radial symmetry,
arXiv:2404.07830v2.

\bibitem{CHS}	G. Chen, Y. Hu and Y. Shen, Structurally stable singularities and Lipschitz stable optimal transport metrics for the compressible Euler equations, \emph{submitted},
available at  arXiv:2609.13943.





%

\bibitem{Ch1} D. Christodoulou, {\it The Shock Development Problem}, EMS Monographs in Mathematics,
European Mathematical Society (EMS), Z\"urich, 2019.



\bibitem{Damon} J.~Damon, Generic properties of solutions to partial differential equations
{\it Arch. Rational Mech. Anal.} {\bf 140} (1997), 353--403.

\bibitem{D} V.~G.~Danilov, Interaction of $\delta$-shock waves in a system of pressureless gas dynamics equations.
{\it Russ. J. Math. Phys.} {\bf 26} (2019),  306--319.


\bibitem{DM} V.~G.~Danilov and  D.~Mitrovic,
Shock wave formation process for a multidimensional scalar conservation law.
{\it Quart. Appl. Math.} {\bf 69} (2011), 613--634.

%


\bibitem{ERS} W.~E, Yu.~G.~Rykov, and  Ya.~G.~Sinai,
Generalized variational principles, global weak solutions and behavior with random initial data for systems of conservation laws arising in adhesion particle dynamics.
{\it Comm. Math. Phys.} {\bf 177} (1996), 349--380.


%

%
%


\bibitem{GG}  M.~Golubitsky and  V.~Guillemin,
{\it Stable Mappings and Their Singularities.} Springer-Verlag, New York, 1973.

\bibitem{GS} M.~Golubitsky and D.~Schaeffer,
 Stability of shock waves for a single conservation law.
{\it Adv. Math.} {\bf 15} (1975),  65-71.



\bibitem{GSS} S.~N.~Gurbatov, A.~I.~Saichev, and S.~F.~Shandarin, Large-scale structure of the Universe. The Zeldovich
approximation and the adhesion model. {\it  Phys. Usp.}  {\bf 55} (2012),  223--249.

\bibitem{Guck}  J.~Guckenheimer, Catastrophes and Partial Differential Equations.
{\it Ann. Inst. Fourier} {\bf 23} (1973), 31--59.


\bibitem{Gk2}
J.~Guckenheimer, Solving a single conservation law. In
``Dynamical systems - Warwick 1974'', pp. 108--134.
Springer Lecture Notes in Math, {\bf 468}.

\bibitem{hw}F. Huang, Z. Wang, Well posedness for pressureless flow, {\it Commun. Math. Phys.}, {\bf 222} (2001),  117 -146.


\bibitem{Kong}
D.X.~Kong,
Formation and propagation of singularities for $2\times 2$ quasilinear hyperbolic systems.
{\it Trans. Amer. Math. Soc.} {\bf  354}  (2002), 3155--3179.
%
%


\bibitem{li}F.B. ~Li, On systems of partial differential equations of
Briot-Bouquet type. {\it Bull. Aust. Math. Soc.} {\bf  98}  (2018), 122--133.

    
    
    

    

    



\bibitem{S} D.~Schaeffer, A regularity theorem for conservation laws,
{\it Adv. in Math.} {\bf  11}
(1973), 368--386.

\bibitem{Sever}
M.~Sever, An existence theorem in the large for zero-pressure gas dynamics.  {\it Diff. Integral Equat.}  \textbf{14}  (2001), 1077--1092.

%






\bibitem{Z} Ya.~B.~Zeldovich, Gravitational instability: An approximate theory for large density perturbations, {\it Astron. Astrophys.} {\bf 5} (1970), 84--89.


\end{thebibliography}
\end{document}